\documentclass[a4paper, 11pt]{article}

\usepackage{ulem}
\usepackage{comment}
\usepackage{tikz-cd}

\usepackage{tikz}
\usepackage{amsmath, amssymb}
\usepackage{mathrsfs}
\usepackage{thm-restate}

\usepackage{amsthm}
\usepackage{color}
\usepackage{graphics}
\usepackage{mathtools}

\usepackage{xurl}

\newtheorem{defn}{Definition}[section]  
\newtheorem{rem}[defn]{Remark} 
\newtheorem{exam}[defn]{Example}

\theoremstyle{plain}
\newtheorem{thm}[defn]{Theorem}

\newtheorem{prop}[defn]{Proposition} 
\newtheorem{lem}[defn]{Lemma} 
\newtheorem{cor}[defn]{Collorary}

\newtheorem*{thm*}{Theorem}



\newenvironment{pf*}[1]{\proof[#1]}{\endproof}

\newcommand{\myclaim}{問題}
\newtheorem{claim}[defn]{\myclaim}

\newcommand{\lm}{\lambda}

\newcommand{\Lm}{\Lambda}
\newcommand{\fp}{\mathbb{F}_p}
\newcommand{\fpp}{\mathbb{F}_{p^2}}

\newcommand{\bC}{\mathbb{C}}

\newcommand{\bQ}{\mathbb{Q}}
\newcommand{\bN}{\mathbb{N}}
\newcommand{\bZ}{\mathbb{Z}}

\newcommand{\F}{\mathbb{F}}
\newcommand{\Z}{\mathbb{Z}}
\newcommand{\N}{\mathbb{N}}
\newcommand{\cO}{\mathcal{O}}
\newcommand{\ord}{{\operatorname{ord}}}

\newcommand{\Aut}{{\operatorname{Aut}}}

\newcommand{\height}{\operatorname{ht}}
\newcommand{\SL}{\operatorname{SL}}

\title{The Lang-Trotter conjecture on average for genus-$2$ curves with $S_3$ reduced automorphism group }
\author{Chihiro Ando\thanks{Graduate School of Environment and Information Sciences, Yokohama National University.
E-mail: \texttt{ando.chihiro.zs@gmail.com} } \  and Shushi Harashita\thanks{Graduate School of Environment and Information Sciences, Yokohama National University.
E-mail: \texttt{harasita@ynu.ac.jp}}}
\date{}

\begin{document}

\maketitle

\begin{abstract}
    \noindent 
    For an elliptic curve $E$ over $\mathbb{Q}$ without complex multiplication, Lang and Trotter conjectured that the number of primes $p <X$ at which $E$ has a supersingular reduction is asymptotically equal to $c\sqrt{X}/\log X$, where $c>0$ is a constant depending only on $E$. While it remains an open question, an average estimation related to the Lang--Trotter conjecture was established by Fouvry and Murty. This result is called the Lang--Trotter conjecture on average. We extend the Lang--Trotter conjecture to curves of genus $2$ and obtain a similar result to the Lang--Trotter conjecture on average for the family of curves $C_{\lambda}:y^2=x(x-1)(x-{\lambda})(x-(\lambda-1)/{\lambda})(x-1/ (1-\lambda))$. These curves are characterized as curves of genus $2$ with reduced automorphism group containing symmetric group $S_3$. 
    
\end{abstract}

\section{Introduction}
For an elliptic curve $E$ over $\mathbb{Q}$ with complex multiplication, Deuring proved that the number of primes $p <X$ at which $E$ has a supersingular reduction is asymptotically equal to 
 \[
 \frac{1}{2} \frac{X}{\log X}.
 \]
 For an elliptic curve $E$ over $\mathbb{Q}$ without complex multiplication, Lang and Trotter conjectured that the number of such primes is asymptotically equal to 
 \[\frac{c\sqrt{X}}{\log X},\]
 where $c>0$ is a constant depending only on $E$. It is still an open question, however,  Elkies \cite{Elkies} proved a weaker result, namely that there are infinitely many supersingular primes. Furthermore, an average estimation related to the Lang--Trotter conjecture was established by Fouvry and Murty \cite{FM01}.  They calculated 
 \begin{eqnarray*}
    && \frac{1}{AB} \sum_{\lvert a \rvert \le A} \sum_{\lvert b \rvert \le B}  \#\{p<X\mid E_{a,b} \textrm{ has a supersingular reduction at } p\}\\
    &&\sim \frac{4 \pi}{3} \frac{\sqrt{X}}{\log X},
 \end{eqnarray*}
for $A,B > X^{1+ \epsilon}$, where $E_{a,b}$ is an elliptic curve defined by an equation $y^2=x^2+ax+b$ and $a,b$ run through the integers. They also showed that the above average estimation holds for $A,B > X^{1/2+ \epsilon}$ and $AB > X^{3/2 +\epsilon}$. This result is called \textit{the Lang--Trotter conjecture on average}. One of the key \textcolor{black}{points} in their proof is that the number of supersingular elliptic curves $E_{a,b}$ over $\fp$ ($p \neq2,3$) with $0 \le a,b < p $ can be written as
\[
\frac{p}{2} \cdot \left(h(-4p)+h(-p) \right)+O(p),
\]
where $h(-p)$ and $h(-4p)$ are the class numbers of primitive quadratic forms with discriminants $-p$ and $-4p$, respectively. We note that $E_{a,b}$ has a supersingular reduction at $p$ if and only if the trace of the Frobenius morphism of $E_{a,b}/\fp$, denoted by $a_p(E_{a,b})$, is zero and that the result by Fouvry and Murty is the case of $a_p(E_{a,b})=0$. It was extended to the case $a_p(E_{a,b})=r$ for any $r\in \bZ$ by David and Pappalardi \cite{DP} and the improved result was given by Baier \cite{SB}. 

In this paper, we extend the Lang--Trotter conjecture on average to curves of genus 2. Let $K$ be an algebraically closed field of characteristic $p \ne 2$. Curves of genus $2$ are
classified into 7 types \textcolor{black}{with respect to their reduced automorphism groups (cf.\ Igusa \cite[\S\ 8]{Igusa}, also see \cite[1.2]{IKO02}). Here, the \textit{reduced automorphism group} ${\rm RA}(C)$ of hyperelliptic curve $C$ is the quotient of the automorphism group of $C$ by the central subgroup generated by the hyperelliptic involution.} 
In our previous work \cite{Ando}, we obtained the Lang--Trotter conjecture on average for the family of genus-$2$ curves
\[
y^2=x(x-1)(x+1)(x-{\lambda}) \left(x-\frac{1}{\lambda}\right),
\]
\textcolor{black}{whose reduced automorphism groups contain Klein-$4$ group}.
In the present paper, we focus on another type of them, the genus-$2$ curves defined by the following equation with parameter $\lm \in K$:
\[
C_{\lm}:y^2=x(x-1)(x-{\lambda}) \left(x-\frac{\lambda-1}{\lambda}\right)\left(x-\frac{1}{1-\lambda}\right).
\]
 These genus-$2$ curves are characterized as having the reduced automorphism group ${\rm RA}(C_\lambda)$ containing symmetric group $S_3$. For almost all $\lambda$, it is known that ${\rm RA}(C_\lambda)$ is isomorphic to $S_3$. 
Let $p$ be a prime such that $p \ge 5$. We consider whether $C_{\lm}$ has a superspecial reduction at $p$.
Here, a curve $C$ over a field $K$ is \textit{superspecial} if its Jacobian variety is isomorphic to a product of supersingular elliptic curves over $\overline{K}$, the algebraic closure. 
Our aim is to compute the average of 
\[
{\phi}_{\lm}(X):= \# \lbrace p<X \mid C_{\lm} \text{ has a superspecial reduction at } p  \rbrace .
\]
for $\lm\in \bQ$. To this end, we firstly determine the value 
\[
{\psi}_{p} := \# \lbrace \lm \in \fp \mid C_{\lm} \text{ is superspecial}  \rbrace 
\]
for each prime $p$. 
The number of isomorphism classes of superspecial genus-$2$ curves was computed for each of the 7 types by
Ibukiyama, Katsura and Oort \cite{IKO02}. In addition, the number of isomorphism classes of superspecial genus-$2$ curves that have models over $\fp$ was determined by Ibukiyama and Katsura \cite{IKadd} in terms of the class numbers and the type numbers. 
Moreover, Katsura and Oort \cite{KatsuraOort} obtained the number of $(a,b) \in {\overline{\fp}}^2$ 
such that the genus-$2$ curve $C_{a,b}:y^2=(x^2-1)(x^2-a)(x^2-b)$ is superspecial.
However, neither the number of $(a,b) \in {\fp}^2$ such that the genus-2 curve $C_{a,b}$ is superspecial has not been argued, nor the number of $\lambda\in\fp$ such that $C_\lambda$ is superspecial.  The first theorem determines $\psi_p$ in terms of the class numbers $h(-p)$ and $ h(-3p)$. 
\begin{restatable}{theor}{thmA} \label{thm:A}
     Let $p\ge 5$ be a prime. Then we have 
    \begin{enumerate}
        \item [] ${\psi}_{p}=\frac{3}{2}h(-3p)$ if $p \bmod 4=1$,
        \item [] ${\psi}_{p}=0$ if  $p \bmod 12=7$,
        \item [] ${\psi}_{p}=3h(-p)$ if $p \bmod 12=11$.
    \end{enumerate}
\end{restatable}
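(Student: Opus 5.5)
We plan to exploit the $S_3$-symmetry to reduce superspeciality of $C_\lambda$ to supersingularity of a single elliptic curve, and then count $\fp$-rational supersingular parameters by class numbers.

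\textbf{Step 1: descend to an elliptic quotient.} The six roots $0,1,\infty,\lambda,(\lambda-1)/\lambda,1/(1-\lambda)$ are permuted by the order-$3$ Möbius map $\sigma\colon x\mapsto 1/(1-x)$, which sends $0\mapsto1\mapsto\infty$ and $\lambda\mapsto 1/(1-\lambda)\mapsto(\lambda-1)/\lambda$. An involution in $S_3$, for example $x\mapsto 1-x$ suitably composed, also preserves the root set and lifts to an automorphism of $C_\lambda$. The quotient of $C_\lambda$ by such a non-hyperelliptic involution is an elliptic curve $E_\lambda$. We will write down an explicit model of $E_\lambda$ and of the complementary quotient $E'_\lambda$, so that $J(C_\lambda)\sim E_\lambda\times E'_\lambda$. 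Since the three involutions of $S_3$ are conjugate by $\sigma$, we expect $E_\lambda\cong E'_\lambda$, or at least that they are isogenous. Hence $C_\lambda$ is superspecial if and only if $E_\lambda$ is supersingular. In genus $2$, superspecial is equivalent to the Jacobian being isomorphic to a product of supersingular curves, and isogeny to such a product suffices only once $a$-number $2$ holds. Because the isogeny $E\times E'\to J$ has degree $4$, prime to $p$, supersingular quotients do give superspecial $C_\lambda$. We will compute the $j$-invariant $j(E_\lambda)=J(\lambda)$ as an explicit rational function of $\lambda$, presumably through an $S_3$-invariant $t=t(\lambda)$ of degree $6$ or $3$.

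\textbf{Step 2: count fibers.} Then $\psi_p=\#\{\lambda\in\fp : J(\lambda)\text{ is a supersingular } j\}$, after excluding the finitely many degenerate $\lambda$ where roots collide; these contribute $O(1)$, which we must check is actually $0$. Supersingular $j$-values in $\fp$ correspond to curves with $\fp$-rational Frobenius $\sqrt{-p}$. For $j\in\fp$ supersingular, the number of $\lambda\in\fp$ with $J(\lambda)=j$ depends on whether an auxiliary square root or cube root determined by $j$ lies in $\fp$. We expect the factorization of $J$ to show that $J-1728$ or $J$ is essentially a square or cube up to factors. This would explain the conditions modulo $4$ and $3$. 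The counts should then be expressed through the classical formulas: the number of supersingular $j\in\fp$ is $\tfrac12 h(-4p)$, $h(-p)$, or $\tfrac12(h(-p)+h(-4p))$ according to $p\bmod 4$. More refined, a $j$ admits an extra rational structure, such as a rational $3$-torsion point or a $\sqrt{-3}$-structure, exactly for curves whose endomorphism ring contains suitable elements, giving class numbers of discriminant $-3p$.

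\textbf{Step 3: identify the class numbers.} This is the main obstacle. We need to interpret the condition ``$j\in\fp$ supersingular and the relevant element $\sqrt{j-1728}$ or $\sqrt[3]{j}$ (or a $\sqrt{-3}$-twist condition) lies in $\fp$'' as counting $\fp$-curves with an action of $\bZ[\sqrt{-3p}]$-type orders. We would try to realize $E_\lambda$ together with its $\fp$-Frobenius $\pi$, $\pi^2=-p$, and the order-$3$ structure coming from $\sigma$. For instance, $\sigma$ might induce a $3$-isogeny or a $\mu_3$-type level structure on $E_\lambda$. The combined order is then $\bZ[\pi,\omega]$-related, and its optimal embeddings are counted by $h(-3p)$ via Deuring correspondence, following the strategy of Fouvry--Murty and of our previous paper \cite{Ando} for the Klein-$4$ case. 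For $p\equiv 7\pmod{12}$ we expect a parity or quadratic-residue obstruction: $-3$ and $-1$ have incompatible residuosity, forcing no rational $\lambda$ and giving $\psi_p=0$. For $p\equiv11\pmod{12}$, the factor $3h(-p)$ should come from the $S_3$-orbit of size $6$ halved by twisting. The constants $\tfrac32$ and $3$ will be fixed by carefully tracking orbit sizes of $S_3$ acting on $\lambda$, together with the automorphism counts at $j=0,1728$.
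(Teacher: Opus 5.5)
Your Step 1 lands on a true statement: the $(2,2)$-isogeny $E\times E'\to J(C_\lambda)$ has degree prime to $p$, so supersingular quotients give superspeciality. But the structure you attribute to the two quotients is wrong, and Step 2 rests on it.

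The two elliptic quotients are $C_\lambda/\tau$ and $C_\lambda/\iota\tau$, where $\iota$ is the hyperelliptic involution and $\tau,\iota\tau$ are the two lifts of one reduced involution. They are \emph{not} related by $\sigma$: for generic $\lambda$, $\mathrm{Aut}(C_\lambda)\cong C_2\times S_3$, and there $\tau$ and $\iota\tau$ are not conjugate. Nor are the quotients isomorphic. They are the curves $E_{\Lambda^{\pm}(\lambda)}$ with $\Lambda^{\pm}(\lambda)=(1-\lambda)(\lambda\pm\sqrt{\lambda^2-\lambda+1})^2$, which are $3$-isogenous and are interchanged by $\sqrt{\lambda^2-\lambda+1}\mapsto-\sqrt{\lambda^2-\lambda+1}$.

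Hence there is no rational function $J(\lambda)=j(E_\lambda)$. Counting $\lambda$ over $\mathbb{F}_p$-rational supersingular $j$-invariants, with formulas like $\tfrac12h(-4p)$, is the wrong framework. For $p\equiv1\bmod4$ it fails outright. Supersingularity forces $\sqrt{\lambda^2-\lambda+1}\notin\mathbb{F}_p$: otherwise the Legendre model is $\mathbb{F}_p$-rational with rational $2$-torsion, giving $4\mid p+1$. Apart from $8000$ and $54000$, the relevant $j$-invariants come in Frobenius-conjugate pairs in $\mathbb{F}_{p^2}\setminus\mathbb{F}_p$.

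Step 3, which you call the main obstacle, is where the proof actually lives, and your sketched mechanism is not the right one. Since $\sigma$ does not normalize $\langle\tau\rangle$, it induces no endomorphism of $E_\lambda$, and no order like $\bZ[\pi,\omega]$ arises. Discriminant $-3p$ comes instead from the $p$-Frobenius $F$, which maps $E_{\Lambda^-(\lambda)}$ to its conjugate $E_{\Lambda^+(\lambda)}$. Combining $F$ with explicit $3$-isogenies satisfying $\psi^-\circ\psi^+=[-3]$ and $F\circ\psi^-=\psi^+\circ F$ gives $(F\circ\psi^-)^2=[-3p]$. Checking that $F\circ\psi^-$ fixes $E[2]$ then yields $\bZ[(1+\sqrt{-3p})/2]\subset\mathrm{End}$.

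You then still need three further ingredients:
(i) the converse, that every curve with CM by this order is some $E_{\Lambda^{\varepsilon}(\lambda)}$ with $\lambda\in\mathbb{F}_p$;
(ii) that the $\lambda$ giving a fixed unordered pair $\{j(E_{\Lambda^-(\lambda)}),j(E_{\Lambda^+(\lambda)})\}$ form one orbit of size $6$ (size $3$ for the orbit of $2$);
(iii) the multiplicities of the roots of $P_{3p}\bmod p$, all $2$ except $8000$ with multiplicity $4$, obtained from $\Phi_{3p}(X,X)\equiv\Phi_3(X,X^p)^2$ together with Gross--Zagier.
Only (iii) converts orbit sizes into $\tfrac32\deg P_{3p}=\tfrac32h(-3p)$; tracking orbit sizes alone cannot fix the constant.

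For $p\equiv11\bmod12$ you similarly need Elkies' factorization $P_p\equiv(X-1728)R(X)^2\bmod p$. You also need the fact that each supersingular $y^2=x(x-1)(x-t)$ with $t\in\mathbb{F}_p$ has exactly two $\mathbb{F}_p$-rational $3$-torsion $x$-coordinates, each giving one $\lambda$.

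The case $p\equiv7\bmod12$ is not a residuosity coincidence. For $p\equiv3\bmod4$, the Frobenius construction would give an endomorphism with square $[3p]$, which is impossible, so $\sqrt{\lambda^2-\lambda+1}\in\mathbb{F}_p$. Then $(\lambda+1\pm2\sqrt{\lambda^2-\lambda+1})/3$ is an $\mathbb{F}_p$-rational $3$-torsion $x$-coordinate. This forces $3\mid\#E(\mathbb{F}_{p^2})=(p+1)^2$, which fails for $p\equiv1\bmod3$.
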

To prove Theorem \ref{thm:A}, we use the result of Ibukiyama, Katsura and Oort \cite[\S 1.2]{IKO02} that $C_{\lm}$ is superspecial if and only if two elliptic curves 
 \begin{equation*}\label{eq:E_Lambda}
     E_{{\Lm}^{-}(\lm)}:y^2=x(x-1)\left(x-(1-\lambda)(\lm - \sqrt{{\lambda}^2-\lambda+1})^2 \right)
 \end{equation*}
 and 
\begin{equation*}\label{eq:E_LambdaPrime}
 E_{{\Lm}^{+}(\lm)}:y^2=x(x-1)\left(x-(1-\lambda)(\lm +\sqrt{{\lambda}^2-\lambda+1})^2 \right)
 \end{equation*}
 are supersingular. 
  Thus, it suffices to count the number of $\lambda \in \fp$ such that two associated elliptic curves $E_{\Lm^{-}(\lm)}$ and $ E_{{\Lm}^{+}(\lm)}$ are supersingular. \textcolor{black}{In fact, if either of the elliptic curves is supersingular, then both are supersingular (cf.\ \cite[Proposition 1.10]{IKO02}).  
To count such $\lm\in\fp$, a new and more elaborate argument was needed, as outlined below, because the defining equation contains radicals and differs from those treated in previous works (e.g. \cite{AT02}, \cite{Ando} and \cite{FM01}) on elliptic curves.}
 We firstly show that the $j$-invariants of supersingular elliptic curves $E_{\Lm^{-}(\lm)}$ and $ E_{{\Lm}^{+}(\lm)}$ are the roots of $P_{3p}(X) \bmod p$ if $p \equiv1 \bmod 4$ and are the roots of $P_{p}(X) \bmod p$ if $p \equiv3 \bmod 4$ and that conversely any root of $P_{3p}(X) \bmod p$ for $p \equiv1 \bmod4$ and any root of $P_{p}(X)\bmod p$ for $p \equiv3 \bmod 4$ is equal to the $j$-invariant of a supersingular elliptic curve $E_{\Lm^{\varepsilon}(\lm)}$ with some $\lm \in \fp$ and $\varepsilon\in\{\pm1\}$ (see Section \ref{the-important}). Here $P_{3p}(X)$ and $P_p(X)$ are the Hilbert class polynomials of level $3p$ and $p$ respectively (see \S \ref{complex}). An important fact is that the degrees of the polynomials $P_{3p}(X)$ and $P_p(X)$ are $h(-3p)$ and $h(-p)$ respectively.  We require an elaborate argument, since the endomorphism rings of the elliptic curves are far from obvious. 
 We also use the factorization of $P_{3p}(X)$ and $P_p(X)$ mod $p$ to 
 determine the value $\psi_p$. The factorization of $P_p(X) \bmod p$ for $p \equiv3\bmod4$ was determined by Elkies \cite{Elkies} or Kaneko \cite[Appendix]{kaneko} but that of $P_{3p}(X) \bmod p $ for $p\equiv1 \bmod 4$ has not been obtained, so we give a careful observation concerning $P_{3p}(X) \bmod p$ for $p \equiv1 \bmod 4$ in Section \ref{observationoffactorization}.

Using the value $\psi_p$, we compute the average of $\phi_{\lm}(X)$ for $\lm \in \bZ$ such that $|\lm|$ is less than a positive number $N>X^{1+\epsilon}$.
 \begin{restatable}{theor}{thmB}
 \label{thm:B}
    Let $X,\epsilon$ be positive numbers and $N$ a positive number that satisfies $N > X^{1+\epsilon}$. Then, we have
    \[
   \frac{1}{N} \sum_{\lvert \lm \rvert \le N} {\phi}_{\lm}(X) \sim \frac{(6+4\sqrt{3})\pi}{9} \frac{\sqrt{X}}{\log X} 
    \]
    as $X \rightarrow \infty$.
\end{restatable}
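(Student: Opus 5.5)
The plan is to reduce Theorem~\ref{thm:B} to an average of class numbers over primes in residue classes, using Theorem~\ref{thm:A} for the count modulo each $p$. First I interchange the order of summation:
\[
\frac{1}{N}\sum_{|\lm|\le N}\phi_\lm(X)=\frac1N\sum_{5\le p<X}\#\{|\lm|\le N : C_\lm \bmod p \text{ is superspecial}\}+O\!\left(\frac{1}{\log X}\right).
\]
Here $p=2,3$ contribute $O(1)$ in total, which after dividing by $N$ is certainly $O(1/\log X)$. For a fixed $p\ge5$, the condition ``$C_\lm$ has good superspecial reduction at $p$'' depends only on $\lm \bmod p$. The finitely many bad residues ($\lm\equiv 0,1$, roots of $\lm^2-\lm+1$, and coincidences among the five roots) form $O(1)$ classes and are excluded from both sides. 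Integers $|\lm|\le N$ fall into each residue class $2N/p+O(1)$ times, so the inner count equals $\psi_p(2N/p+O(1))$. Hence
\[
\frac1N\sum_{|\lm|\le N}\phi_\lm(X)=\sum_{p<X}\frac{2\psi_p}{p}+O\Big(\frac1N\sum_{p<X}\psi_p\Big).
\]
By Theorem~\ref{thm:A} and the bound $h(-D)\ll \sqrt D\log D$, we have $\psi_p\ll\sqrt p\log p$. The error is therefore $\ll X^{3/2}/N\le X^{1/2-\epsilon}$, which is $o(\sqrt X/\log X)$ because $N>X^{1+\epsilon}$.

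For the main term I split by Theorem~\ref{thm:A}:
\[
\sum_{p<X}\frac{2\psi_p}{p}=3\sum_{\substack{p<X\\ p\equiv1 (4)}}\frac{h(-3p)}{p}+6\sum_{\substack{p<X\\ p\equiv 11(12)}}\frac{h(-p)}{p}.
\]
For $p\equiv1\bmod4$ the number $-3p$ is a fundamental discriminant, and for $p\equiv3\bmod 4$ so is $-p$. Thus the class number formula gives $h(-3p)=\sqrt{3p}\,L(1,\chi_{-3p})/\pi$ and $h(-p)=\sqrt p\,L(1,\chi_{-p})/\pi$ for $p>3$. The key input is then an average of $L(1,\chi)$ over primes in a progression, of the form
\[
\sum_{\substack{p<x\\ p\equiv a(q)}}L(1,\chi_{-dp})\sim c_{d,a,q}\,\pi(x;q,a).
\]
The constant $c_{d,a,q}$ is an Euler product $\prod_\ell(1-\overline{\chi(\ell)}/\ell)^{-1}$, where $\overline{\chi(\ell)}$ is the average of $\chi_{-dp}(\ell)$ over $p$ in the class. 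For $\ell\ge5$ this average is $0$ by equidistribution of primes in classes mod $\ell$, so only $\ell=2,3$ contribute. Those local factors are fixed by the congruences $p\equiv1\bmod4$, respectively $p\equiv11\bmod12$: $\chi_{-3p}(2)$ depends on $p\bmod 8$, the value at $3$ is ramified, and so on.

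To prove the $L$-average I would truncate $L(1,\chi)=\sum_{n\le p^{\delta}}\chi(n)/n+O(p^{-\eta})$, using Pólya--Vinogradov/Burgess for the tail, or alternatively discard a sparse exceptional set via the large sieve. Then I would swap the sums and apply Siegel--Walfisz to $\sum_p\left(\frac{-dp}{n}\right)$ for $n$ up to a small power. Only $n$ of the form $2^a3^b$ (times squares) survive in the main term, which gives the Euler product. This uniformity step is the main obstacle; I would follow Fouvry--Murty's treatment \cite{FM01} and our earlier paper \cite{Ando}, adapting the congruence conditions.

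Finally, partial summation together with $\sum_{p<x,\,p\equiv a(q)}p^{-1/2}\sim\frac{2}{\varphi(q)}\frac{\sqrt x}{\log x}$ gives the asymptotics. The first sum contributes $\frac{3\sqrt3\,c_1}{\pi}\frac{\sqrt X}{\log X}$ and the second $\frac{3c_2}{\pi}\frac{\sqrt X}{\log X}$. I would then check by explicit computation of the $2$- and $3$-factors that $c_1=4\pi^2/27$ and $c_2=2\pi^2/9$. With these values the total is $\big(\frac{4\sqrt3}{9}+\frac{6}{9}\big)\pi\frac{\sqrt X}{\log X}$, which matches the statement.
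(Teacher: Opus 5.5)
Your skeleton matches the paper's proof: swap the sums to get $\sum_{p<X}(2N/p+O(1))\psi_p$, bound the error by $X^{3/2}/N$, and insert the values of $\psi_p$ and the class number formula. The main term then comes from the $n$ for which $\chi(n)$ does not average out. Your final values $c_1=4\pi^2/27$ and $c_2=2\pi^2/9$ are also right.

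\textbf{The gap: the uniformity step.} You flag it as the main obstacle, but the tool you name does not close it.
\begin{itemize}
\item With P\'olya--Vinogradov or Burgess, a truncation of $L(1,\chi_{-dp})$ valid for every $p$ needs $n$ up to $p^{1/2+\epsilon}$ or $p^{1/4+\epsilon}$.
\item Siegel--Walfisz controls $\sum_{p<X,\,p\equiv a\,(q)}\bigl(\frac{n}{p}\bigr)p^{-1/2}$ only for $n\le(\log X)^A$.
\item For the other $n$ with $(\log X)^A<n\le U$, the trivial bound $O(\sqrt X/\log X)$ per $n$, summed against $1/n$, gives only $O(\sqrt X)$. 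This is a factor $\log X$ larger than the main term.
\end{itemize}
What is needed is an estimate on average over the character parameter. The paper truncates with P\'olya's inequality at $U=X^{3/4}$, so the tail is $O(X^{1/4})$. It then uses Jutila's bound $\sideset{}{^*}\sum_{|d|\le D}\bigl|\sum_{m\le X}\Lambda(m)\bigl(\frac{d}{m}\bigr)\bigr|\ll XD(\log X)^{-C}$ for $D\le X^{49/50}$, restricted to the relevant progression. This is Lemma~\ref{jutila} for $m\equiv 11\bmod 12$ (excluding $d$ and $d/3$ square), plus the analogous version for $m\equiv 1\bmod 4$. After a dyadic split in $d$ and partial summation, every $d\le U$ outside the main-term shapes contributes $o(\sqrt X/\log X)$. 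Your ``sparse exceptional set'' alternative would need an input of the same large-sieve type and is not developed. As written, the crux is deferred rather than supplied.

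\textbf{A separate error in the constant.} The formula $c_{d,a,q}=\prod_\ell(1-\overline{\chi(\ell)}/\ell)^{-1}$ is wrong: the average of an Euler product is not the Euler product of the averages. Since $\chi_{-dp}(\ell)^2=1$ for $\ell\nmid dp$, the local factor at every $\ell$ where $\chi(\ell)$ averages to $0$ is $\sum_k\ell^{-2k}=(1-\ell^{-2})^{-1}$, not $1$. This covers all $\ell\ge 5$, and $\ell=2$ in both families. Read literally, your formula gives $c_1=1$ and $c_2=3/2$. The correct constants are:
\begin{itemize}
\item $c_1=\sum_{3\nmid d}d^{-2}=\frac89\zeta(2)$;
\item $c_2=\sum_d d^{-2}+\sum_l (3l^2)^{-1}=\frac43\zeta(2)$, because $\bigl(\frac{3}{p}\bigr)=1$ for $p\equiv 11\bmod 12$.
\end{itemize}
These are exactly the contributions of $n=d^2$ and $n=3l^2$ in the paper's $S(X,U)$ and $T(X,U)$. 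Your stated values agree with them, but your displayed Euler product does not.
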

We use the idea of Fouvry and Murty, and this theorem is followed by a similar argument to our previous work \cite[\S 5]{Ando}.

Furthermore, we also calculate the average of $\phi_\lm(X)$ for $\lm\in \bQ$ such that $\height{(\lm)}<N$. Here, for $\lm=b/a$ with coprime numbers $a,b$, the height of $\lm$, denoted by $\height{(\lm)}$, is max$\{|a|,|b|\}$.
\begin{restatable}{theor}{thmC} \label{thm:C}
      Let $X,\epsilon$ be positive numbers and $N$ a positive number that satisfies $N > X^{1+\epsilon}$. Then,
    \[
   \frac{1}{N^2} \sum_{\lm \in \bQ,\ \height(\lm) \le N } {\phi}_{\lm}(X) \sim \frac{4(3+2\sqrt{3})}{3\pi} \frac{\sqrt{X}}{\log X} 
    \]
    as $X \rightarrow \infty$.
 \end{restatable}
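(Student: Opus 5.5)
The plan is to interchange the order of summation and reduce the height average to counting rational points in residue classes modulo $p$, in the same spirit as the proof of Theorem \ref{thm:B}. Write $\lm=b/a$ with $\gcd(a,b)=1$, $a>0$, $\max\{|a|,|b|\}\le N$. Then
\[
\sum_{\height(\lm)\le N}\phi_\lm(X)=\sum_{5\le p<X}\#\{\lm:\height(\lm)\le N,\ C_\lm \text{ has superspecial reduction at } p\}+O(N^2).
\]
The term $O(N^2)$ absorbs $p=2,3$.

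For $p\nmid a$, the reduction of $C_\lm$ at $p$ is the curve $C_{\lm_0}$ with $\lm_0\equiv b a^{-1}\bmod p$. This holds as long as $\lm_0$ avoids the finitely many residues where $C_{\lm_0}$ degenerates, for example $\lm_0\in\{0,1\}$ or $\lm_0^2-\lm_0+1=0$. Those residues contribute only $O(1)$ classes. The case $p\mid a$, i.e.\ $\lm_0=\infty$, is also a single class, and its contribution is negligible by the same count below. Hence the inner count is
\[
\sum_{\lm_0\in\fp,\ C_{\lm_0}\ \mathrm{superspecial}} M_p(\lm_0)+O\!\left(\frac{N^2}{p}+N\log N\right),
\]
where $M_p(\lm_0)=\#\{(a,b):\gcd(a,b)=1,\ 0<a\le N,\ |b|\le N,\ b\equiv\lm_0 a \bmod p\}$.

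Next I will estimate $M_p(\lm_0)$ uniformly in $\lm_0$. Remove coprimality by M\"obius inversion, writing $\sum_{d}\mu(d)$ over common divisors $d$ with $p\nmid d$; the divisors with $p\mid d$ force $p\mid a$ and are handled by the $\lm_0=\infty$ bound. For fixed $d$ and fixed $a'\le N/d$, the number of $b'$ with $|b'|\le N/d$ and $b'\equiv\lm_0 a'\bmod p$ is $2N/(dp)+O(1)$. Summing over $a'$ and then over $d$ gives
\[
M_p(\lm_0)=\frac{2N^2}{p}\sum_{d}\frac{\mu(d)}{d^2}+O(N\log N)=\frac{12}{\pi^2}\frac{N^2}{p}+O\!\left(\frac{N^2}{p^2}+N\log N\right).
\]
Theorem \ref{thm:A} gives $\psi_p\ll h(-3p)+h(-p)\ll\sqrt{p}\log p$. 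Summing over $p<X$, the total error is $\ll N\log N\cdot X^{3/2}$ plus lower-order terms. After dividing by $N^2$, this is $\ll X^{3/2}\log N/N\ll X^{1/2-\epsilon}\log X$, which is $o(\sqrt X/\log X)$ because $N>X^{1+\epsilon}$. This uniform lattice-count error is the step I expect to need the most care, and the hypothesis $N>X^{1+\epsilon}$ is used precisely here.

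It remains to evaluate the main term
\[
\frac{12}{\pi^2}\sum_{p<X}\frac{\psi_p}{p}.
\]
This is the same prime sum that appears in Theorem \ref{thm:B}. There one has $\frac1N\sum_{|\lm|\le N}\phi_\lm(X)\sim 2\sum_{p<X}\psi_p/p$, so the proof of Theorem \ref{thm:B} gives
\[
\sum_{p<X}\frac{\psi_p}{p}\sim\frac{(6+4\sqrt3)\pi}{18}\frac{\sqrt X}{\log X}.
\]
That evaluation rests on Theorem \ref{thm:A} together with the Fouvry--Murty type asymptotics for $\sum_{p<X,\,p\equiv c \bmod 12}h(-p)/p$ and $\sum_{p<X,\,p\equiv1 \bmod 4}h(-3p)/p$, which I reuse. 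Multiplying by $12/\pi^2$ gives
\[
\frac{12}{\pi^2}\cdot\frac{(6+4\sqrt3)\pi}{18}=\frac{4(3+2\sqrt3)}{3\pi},
\]
which is the constant in Theorem \ref{thm:C}.
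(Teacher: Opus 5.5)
Your proposal is correct and follows the paper's route. You interchange the sums and use the count $\frac{12}{\pi^2}\frac{N^2}{p}+O(N^2/p^2+N\log N)$ for rationals of height $\le N$ in a fixed residue class mod $p$; the paper imports this from \cite[\S 6]{Ando}, while you derive it by M\"obius inversion. You then reuse the evaluation of $\sum_{p<X}\psi_p/p$ from the proof of Theorem \ref{thm:B} to get the constant $\frac{12}{\pi^2}\cdot\frac{(6+4\sqrt3)\pi}{18}$. One small slip: the M\"obius terms with $p\mid d$ are not handled by the $\lm_0=\infty$ class; they are part of the inclusion--exclusion for each finite $\lm_0$ and contribute exactly the $O(N^2/p^2)$ term (the true main term is $\frac{12}{\pi^2}\frac{N^2}{p+1}$), which your error bound already absorbs.
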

 This result immediately follows from Theorem \ref{thm:B} and our previous work \cite[\S 6]{Ando}.

Our paper is organized into 10 sections. In Section \ref{preliminaries} we recall some properties of elliptic curves with complex multiplication in addition to the Hilbert class polynomials and the modular polynomials and related results by Gross-Zagier. We also review genus-$2$ curves. These are crucial in the proof of Theorem \ref{thm:A}.  Furthermore, we recall an analytic result that we use in the proof of Theorem B. In Section \ref{isogeniesbetweenElaElad}, we construct isogenies between the two elliptic curves $E_{\Lm^{-}(\lm)}$ and $E_{\Lm^{+}(\lm)}$ and in Section \ref{supersingularellwithlambda} we give details of the properties of supersingular elliptic curves $E_{\Lm^{\pm}(\lm)}$ with $\lm \in \fp$ and finally determine their endomorphism rings. In Section \ref{with Endomorphism containing}, we conversely observe elliptic curves over $\overline{\fp}$ with a certain endomorphism ring. In Section \ref{the-important} we discuss the relationship between the $j$-invariants of the supersingular elliptic curves $E_{\Lm^{\pm}(\lm)}$ and the Hilbert class polynomials. In Section \ref{observationoffactorization} we determine the factorization of $P_{3p}(X) \bmod p$ for $p \equiv1 \bmod 4$. We finally prove Theorem \ref{thm:A} in Section \ref{pfofa}, Theorem \ref{thm:B} in Section \ref{pfofb} and Theorem \ref{thm:C} in Section \ref{pfofc}. 


\subsection*{Acknowledgements}
This paper is a part of master’s thesis of the first author. She thanks
the second author for his supervision and for his helpful advice.  The authors also thank Toshiyuki Katsura for his helpful comments.
 
\section{Preliminaries}
\label{preliminaries}
In this section, we recall some known results on
elliptic curves with complex multiplication as well as some properties of the Hilbert class polynomials and the modular polynomials and related results by Gross-Zagier. 
We also review degree-$3$ isogenies of elliptic curves 
and recall some properties of genus-$2$ curves. Finally, we look at an analytic result for the Legendre symbol.

\subsection{The Hilbert class polynomials and modular polynomials }
\label{complex}
In this section, we recall the Hilbert class polynomials and modular polynomials.

Let $D$ be a positive integer with $D\equiv0,3 \bmod 4$ and consider an imaginary quadratic order of discriminant $-D$:
\[
O_D=\bZ\left[\frac{D+\sqrt{-D}}{2}\right].
\]
The Hilbert class polynomial of level $D$ is the monic polynomial $P_D(X)$ whose roots are precisely the distinct $j$-invariants of elliptic curves over $\overline{\bQ}$ with complex multiplication by $O_D$. It is well-known that the degree of $P_{D}(X)$ is equal to the class number of $O_D$ and that $P_{D}(X)$ has its coefficients in the ring of integers. Hence it makes sense to consider $P_D(X)$ mod prime $p$. Let $E$ be an elliptic curve and $p$ an odd prime such that $E$ has a good reduction at $p$. Next lemma explains when $E$ has a supersingular reduction at $p$.
\begin{lem}[Deuring:\cite{Deu}]
\label{nonquadraticp}
Let $p$ be an odd prime of good reduction for $E$. Then $E$ has supersinular redction at $p$ if and only if there exists some $D \equiv 0 \textrm{ or }3 \mod 4$ such that 
$p$ divides the numerator of $P_{D}(j(E))$ and $\left(\frac{-D}{p}\right)=-1$ or the highest power of $p$ dividing $D$ is odd. 
\end{lem}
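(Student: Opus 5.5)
The argument runs through Deuring's reduction theory for CM curves together with the description of supersingular endomorphism rings. I will prove the two implications separately, after reducing to the case where $E$ is defined over a number field $F$ and $\mathfrak{p}\mid p$ is a prime of good reduction. Write $\tilde E$ for the reduction of $E$ at $\mathfrak{p}$.

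\emph{Sufficiency.} Suppose $p$ divides the numerator of $P_D(j(E))$. Then $\tilde j := j(E) \bmod \mathfrak{p}$ is a root of $P_D \bmod p$. Since the roots of $P_D$ are the $j(E')$ with $E'$ having CM by $O_D$, some prime above $p$ in a larger field reduces some such $j(E')$ to $\tilde j$. Hence $\tilde E$ is isomorphic over $\overline{\fp}$ to the reduction of a curve $E'$ with $\mathrm{End}(E')\cong O_D$. Deuring's reduction criterion applies to this $E'$. If $p$ is inert or ramified in $\bQ(\sqrt{-D})$, that is, $\left(\frac{-D}{p}\right)=-1$ or $p\mid D$ with odd multiplicity, then $E'$ has supersingular reduction. (For the ramified case I need the field discriminant divisible by $p$; odd valuation of $p$ in $D$ ensures that $p$ divides the fundamental discriminant.) Supersingularity depends only on the $j$-invariant over $\overline{\fp}$, so $\tilde E$ is supersingular.

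\emph{Necessity.} Suppose $\tilde E$ is supersingular. Then $\mathrm{End}(\tilde E)$ is a maximal order in the quaternion algebra ramified at $p$ and $\infty$. I choose an element $\alpha\in\mathrm{End}(\tilde E)\setminus\bZ$ whose imaginary quadratic field $\bQ(\alpha)$ has $p$ non-split. Such an $\alpha$ exists: the Frobenius-type element of norm $p$, or any element generating a subfield embedded in $B_{p,\infty}$, works. Every quadratic subfield of $B_{p,\infty}$ has $p$ non-split, because $p$ splitting would split the algebra locally at $p$. Let $O=\bZ[\alpha]\otimes\bQ\cap\mathrm{End}(\tilde E)$, of discriminant $-D$.

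By Deuring's lifting theorem, the pair $(\tilde E,\alpha)$ lifts to an elliptic curve $E'$ over a number field with $\mathrm{End}(E')\supseteq O$. Adjusting the order to the exact endomorphism ring of the lift, $j(E')$ is a root of $P_{D'}$ for some $D'$ with the same field $\bQ(\sqrt{-D'})$, and $j(E')\equiv j(\tilde E)$ modulo a prime above $p$. Hence $p$ divides the norm of $P_{D'}(j(E))$, i.e.\ its numerator. Since $p$ is inert or ramified in that field, $\left(\frac{-D'}{p}\right)=-1$ or $p\mid D'$. In the ramified case I still need the valuation of $p$ to be odd. If the conductor is divisible by $p$ and the valuation becomes even, I replace $D'$ by the discriminant of the order with the $p$-part of the conductor removed; the lift with that endomorphism ring has the same supersingular reduction.

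The main obstacle is this necessity direction, specifically the bookkeeping between orders and conductors. I would handle it via Deuring's lifting lemma (the endomorphism $\alpha$ lifts) and by choosing $D$ to be $p$-maximal so that the parity condition holds.
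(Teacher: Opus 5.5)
The paper gives no proof of this lemma; it only cites Deuring. So your outline has to stand on its own. The sufficiency half does. Note that $\left(\frac{-D}{p}\right)=-1$ already forces $p\nmid f$, so $p$ is inert, and odd $v_p(D)$ forces $p\mid d_K$, as you say; Deuring's reduction criterion then applies to a CM curve whose $j$-invariant reduces to $\tilde j$.

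The necessity half has a genuine gap, exactly at the step you flag.

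First, you have misidentified the bad case. For odd $p$ ramified in $K=\bQ(\sqrt{-D'})$ one has $v_p(d_K)=1$, so $v_p(D')=1+2v_p(f')$ is automatically odd. The case that fails is $p$ \emph{inert} with $p\mid f'$. There $\left(\frac{-D'}{p}\right)=0$ and $v_p(D')$ is even, so your sentence ``inert or ramified, hence $\left(\frac{-D'}{p}\right)=-1$ or $p\mid D'$'' does not give the required condition.

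Second, the repair ``the lift with that endomorphism ring has the same supersingular reduction'' is unsupported. Let $p^k\parallel f'$ and let $O''$ be the order of conductor $f'/p^k$, with discriminant $-D''$. There are $h(O'')$ curves with CM by $O''$, and their reductions are in general different supersingular curves. You must single out the right one and compute its reduction. One way to do this:
\begin{itemize}
\item Write $E'\cong\bC/\Lambda'$ with $\Lambda'$ a proper $O'$-lattice. Then $E''=\bC/O''\Lambda'$ has $\mathrm{End}(E'')=O''$ and receives a cyclic isogeny of degree $[O'':O']=p^k$ from $E'$.
\item Its reduction is a degree-$p^k$ isogeny between supersingular curves. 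It is therefore purely inseparable, since a supersingular curve has no points of order $p$.
\item Hence $j(\tilde E'')=\tilde j^{\,p^k}$, not $\tilde j$ in general. You conclude only because $P_{D''}(\tilde j)^{p^k}=P_{D''}(\tilde j^{\,p^k})=0$.
\end{itemize}

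A cleaner fix avoids the adjustment entirely. The optimal order $O=\bQ(\alpha)\cap\mathrm{End}(\tilde E)$ you already defined is automatically $p$-maximal. Indeed, $\mathrm{End}(\tilde E)\otimes\bZ_p$ is the maximal order of the local division algebra, i.e.\ the elements of integral reduced norm. Its intersection with the field $K\otimes\bQ_p$ is therefore $\mathcal{O}_K\otimes\bZ_p$.

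Then lift a generator $\beta$ of $O=\bZ[\beta]$ rather than $\alpha$. Lifting $\alpha$ only gives $\mathrm{End}(E')\supseteq\bZ[\alpha']$, not $\mathrm{End}(E')\supseteq O$ as you assert. With $\beta$, you get $\mathrm{End}(E')\supseteq\bZ[\beta']\cong O$, while reduction embeds $\mathrm{End}(E')$ into $\bQ(\beta)\cap\mathrm{End}(\tilde E)=O$. So $\mathrm{End}(E')\cong O$. Its discriminant $-D$ has conductor prime to $p$, which gives $\left(\frac{-D}{p}\right)=-1$ when $p$ is inert and $v_p(D)=1$ when $p$ is ramified, directly.

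Two small points:
\begin{itemize}
\item An endomorphism of norm $p$ exists only when $\tilde j\in\fp$. This is harmless, since any $\alpha\notin\bZ$ works.
\item If you pass to a number field $F$, read the hypothesis as $\mathfrak p\mid P_D(j(E))$ for the given $\mathfrak p$, not as $p$ dividing the norm. Otherwise the sufficiency step does not pin down the prime.
\end{itemize}
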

Deuring also proved that complex multiplication in characteristic $p$ 
can be lifted to characteristic zero.

\begin{thm}[The Deuring lifting theorem]
\label{deuring}
    Let $A_0$ be an elliptic curve in characteristic $p$, with an endomorphism $\alpha_0$ which is not trivial. Then there exists an elliptic curve $A$ defined over a number field, an endomorphism $\alpha$ of A, and non-degenerate reduction of $A$ at a place $\mathfrak{B}$ lying above $p$, such that $A_0$ is isomorphic to $\overline{A}$, the reduction of $A$ at $\mathfrak{B}$, and $\alpha_0$ corresponds to $\overline{\alpha}$ under the isomorphism.  
\end{thm}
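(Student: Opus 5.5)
The plan follows the classical Deuring lifting argument. First lift the curve, then lift the endomorphism via deformation theory, then algebraize.

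\emph{Step 1: lift the curve and reduce to a single generator.} Choose a finite field $\mathbb{F}_q$ over which $A_0$ and $\alpha_0$ are both defined. Since $\alpha_0 \notin \bZ$, we may replace $\alpha_0$ by $\alpha_0 - n$ for a suitable integer $n$ without loss of generality, so that $\alpha_0$ is an isogeny. The order $\bZ[\alpha_0]\subset \mathrm{End}(A_0)$ is then an imaginary quadratic order $O$. The aim is to find a lift $A$ of $A_0$ over $W(\overline{\mathbb{F}}_p)$ (or over a finite extension of it) to which $\alpha_0$ extends.

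\emph{Step 2: the ordinary case.} This case is handled by the Serre--Tate canonical lift. By Serre--Tate theory, deformations of $A_0$ correspond to deformations of its $p$-divisible group $A_0[p^\infty]\cong \mu_{p^\infty}\times \mathbb{Q}_p/\bZ_p$, and the canonical lift corresponds to the split extension. Every endomorphism of $A_0$ preserves the connected--étale decomposition, so it lifts to the split $p$-divisible group. Hence all of $\mathrm{End}(A_0)$, and in particular $\alpha_0$, lifts to the canonical lift. Grothendieck's existence theorem then algebraizes the resulting formal elliptic curve together with its endomorphism.

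\emph{Step 3: the supersingular case.} Here $\mathrm{End}(A_0)$ is a maximal order in the quaternion algebra ramified at $p$ and $\infty$. The endomorphism $\alpha_0$ makes the formal group of $A_0$ (height $2$) a module over $O\otimes\bZ_p$. By Lubin--Tate theory, or by Gross--Hopkins quasi-canonical lifting, there is a lift over the ring of integers of a finite extension of $W(\overline{\mathbb{F}}_p)$ to which the action of $O\otimes\bZ_p$ extends. Concretely, the locus in the one-dimensional Lubin--Tate deformation space where $\alpha_0$ lifts is cut out by a single equation that is not a unit. This gives a point over some finite extension of $W$, which is the quasi-canonical lift. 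Serre--Tate again converts this into a formal elliptic curve with a lifted endomorphism, and Grothendieck existence algebraizes it.

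\emph{Step 4: descend to a number field.} At this stage $A$ has CM by $O\otimes\bQ$ over a complete discrete valuation field of characteristic $0$. The $j$-invariant of $A$ is a root of the Hilbert class polynomial of an order containing $O$, so it is algebraic. We may therefore take a model of $A$ over a number field $F$ with $\alpha$ also defined over $F$, which holds after a finite extension. The embedding $F\hookrightarrow$ (the local field) then determines a place $\mathfrak{B}$ above $p$. Good reduction at $\mathfrak{B}$ holds because the model comes from a lift with good reduction. The reduction of $\alpha$ is $\alpha_0$ by construction, since reduction $\mathrm{End}(A)\to\mathrm{End}(A_0)$ is injective and compatible with the chosen lifting.

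The main obstacle is Step 3. One must show that the deformation locus of $\alpha_0$ in the supersingular case is nonempty, i.e.\ that the defining equation is not a unit. The first thing I would try is the explicit computation via Dieudonné modules, showing that the obstruction equation is of the form $t^{e}\cdot(\text{unit}) = 0$ for some $e\ge 1$. An alternative is to cite Deuring's original degree argument, which counts roots of the modular equation $\Phi_m(j,j)$ for $m=\deg\alpha_0$ modulo $p$ against roots in characteristic $0$.
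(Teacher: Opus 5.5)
The paper gives no proof of its own here. It cites Lang, whose proof is Deuring's original global argument, essentially the ``degree argument'' you list as a fallback:
\begin{itemize}
\item normalize $\alpha_0$ (replace it by $\alpha_0+n$ and divide out integers) so that it is a separable, primitive cyclic isogeny of some degree $m$;
\item view it as the specialization of a cyclic $m$-isogeny between generic curves, so that $(j_0,j_0)$ is a characteristic-$p$ point of the modular correspondence $\Phi_m(X,Y)=0$ lying on the diagonal;
\item since this correspondence meets the diagonal properly both in characteristic $0$ and mod $p$, compatibility of specialization with intersection gives a characteristic-$0$ point $(j_1,j_1)$ over a number field, with an endomorphism specializing to $\alpha_0$.
\end{itemize}
Your route is the modern local one: Serre--Tate reduces the problem to lifting $(A_0[p^\infty],\alpha_0)$, which you handle by the canonical lift (ordinary case) or by Lubin--Tate/Gross (supersingular case), followed by algebraization and descent.

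Deuring's argument is uniform across the ordinary/supersingular split and lands directly over a number field, so it needs no descent. Yours needs heavier machinery and a descent step, but gives more. In the ordinary case all of $\mathrm{End}(A_0)$ lifts to one curve. In the supersingular case it identifies the lifts (canonical and quasi-canonical) and the ramification of their fields of definition. That is the same deformation picture behind the Gross--Zagier/Dorman input in Theorem~\ref{thm:GZ_for_3p}.

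Two points need correcting, though neither breaks the route.

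\emph{The obstacle in Step 3 is misidentified.} The closed point always lies on the locus where $\alpha_0$ deforms, so ``the defining equation is not a unit'' is automatic. For instance, the non-unit equation $p=0$ has no characteristic-$0$ point. What you need is a characteristic-$0$ point, i.e.\ that the equation is not $p^k$ times a unit. Your guess $t^e\cdot(\text{unit})$, with $t$ the Lubin--Tate parameter, would give a $W$-point $t=0$. That is impossible whenever $L_p=\bQ_p(\alpha_0)$ is ramified, for example $\bQ_p(\sqrt{-3p})$, the very case this paper uses: a lift over $W$ carrying $\alpha_0$ would make $\bZ_p[\alpha_0]$ act on the rank-one $W$-module $\mathrm{Lie}$, forcing $L_p\hookrightarrow W[1/p]$.

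No computation is needed anyway. $\mathrm{End}(\hat A_0)$ is the maximal order of the quaternion division algebra over $\bQ_p$, and that order contains every element integral over $\bZ_p$. So the action of $O\otimes\bZ_p$ extends to $\cO_{L_p}$. Lubin--Tate's canonical lift of this height-one formal $\cO_{L_p}$-module, over the integers of $\widehat{L}_p^{\mathrm{nr}}$, then carries $\alpha_0$.

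\emph{The descent in Step 4 skips the twist issue.} A model $A'$ over $\bQ(j(A))$ is only a twist of your local lift $A$, so ``good reduction because it comes from a lift'' is not yet justified. Pass to a finite extension over which $A'\cong A$, extend this isomorphism to the smooth integral models, and use it to identify $\overline{A'}$ with $A_0$ and $\overline{\alpha}$ with $\alpha_0$. This matters when $j(A_0)\in\{0,1728\}$, where extra automorphisms can conjugate $\alpha_0$.

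Minor points: choosing $\mathbb{F}_q$ in Step 1 presupposes that $j(A_0)$ is algebraic over $\fp$, which is automatic over $\overline{\fp}$ as in the paper. Also, quasi-canonical liftings are due to Gross, not Gross--Hopkins.
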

\begin{proof}
    S. Lang:\cite[Chapter 13, Theorem 14]{SL}
\end{proof}
 We examine the behavior of the Hilbert class polynomials mod $p$. Elkies obtained the factorization of $P_p(X) \bmod p$ as follows:

\begin{prop}[{Elkies:\cite{Elkies}}]
\label{fac_P_p}
Let $p$ be a prime with $p \equiv3 \bmod p$. Then the Hilbert class polynomial $P_p(X)$ factors into $(X-12^3)R(X)^2$ mod $p$ where $R(X)\in\fp[X]$ is a separable polynomial such that $R(12^3)\ne0 \bmod p$.
\end{prop}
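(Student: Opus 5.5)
The plan is to prove Proposition~\ref{fac_P_p} by studying the roots of $P_p(X)$ modulo $p$, which are reductions of CM $j$-invariants for $O_p=\bZ[(1+\sqrt{-p})/2]$. (The hypothesis should read $p\equiv 3 \bmod 4$, so that $-p$ is a discriminant.) First, $p$ ramifies in $\bQ(\sqrt{-p})$. By Lemma~\ref{nonquadraticp} and Deuring's reduction theory, every root of $P_p(X) \bmod p$ is therefore a supersingular $j$-invariant. Consequently each such root $\bar j$ lies in $\fpp$ and is the $j$-invariant of a supersingular curve $E_0$ whose endomorphism ring (a maximal order in the quaternion algebra $B_{p,\infty}$) admits an optimal embedding of $O_p$.

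Next I use the Deuring lifting theorem (Theorem~\ref{deuring}) to see that roots correspond to such embeddings. Concretely, the number of roots of $P_p \bmod p$ equal to $\bar j$, counted with multiplicity, is $\frac{1}{|\mathrm{Aut}(E_0)/\pm1|}$ times the number of optimal embeddings $O_p\hookrightarrow \mathrm{End}(E_0)$ up to conjugation by units. This is the Gross--Zagier / Deuring correspondence between CM lifts and optimal embeddings. The key observation is that $\sqrt{-p}$ maps to an element of norm $p$ and trace $0$. In $\mathrm{End}(E_0)$ such an element is the Frobenius $\pi$ composed with an automorphism. So an embedding of $O_p$ exists exactly when $E_0$ is defined over $\fp$, i.e.\ $\bar j\in\fp$. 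Conversely, a curve over $\fp$ has two conjugate embeddings, $\pi\mapsto\pm\sqrt{-p}$, and these are not unit-conjugate in general. They both lift, giving the two complex conjugate CM points $j,\bar j$ with the same reduction. Hence each $\fp$-rational supersingular $j$ other than $1728$ appears with multiplicity exactly $2$.

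The exception is $j=1728$. For $p\equiv3\bmod4$ it is supersingular and rational. The extra automorphism $i$ anticommutes with $\pi$, so the embeddings $\pi\mapsto\pm\sqrt{-p}$ become conjugate via $i$, and the multiplicity drops to $1$. This accounts for the factor $(X-12^3)$. It also shows that $R(12^3)\not\equiv0$, provided I check that the embeddings of $O_p$ into $\mathrm{End}(E_{1728})$ reduce to a single conjugacy class. For that I would compute explicitly in the maximal order $\bZ\langle i,(1+\pi)/2\rangle$-type presentation. Separability of $R$ is then immediate, since distinct $\fp$-rational $j$'s give distinct roots. Comparing degrees gives $\deg P_p = h(-p) = 1+2\deg R$.

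The main obstacle is making the multiplicity count rigorous, i.e.\ showing that each optimal embedding class contributes exactly one root counted with multiplicity. I would try Gross--Zagier's formula for $\mathrm{ord}_p$ of the resultant, or equivalently Elkies' argument that $\mathrm{End}(E_0)\cap\bQ(\pi)$ with $\pi^2=-p$ forces the local intersection multiplicity to be $1$ in the deformation space. I will also need to handle the case $j=0$ separately; it is supersingular iff $p\equiv2\bmod3$, and there the extra automorphism $\omega$ commutes with $\pi$ only up to $\omega\mapsto\omega^{-1}$, which should still give multiplicity $2$.
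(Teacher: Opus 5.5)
The gap is in your criterion for an embedding of $O_p=\bZ[(1+\sqrt{-p})/2]$. An element $u\pi$ of trace $0$ and norm $p$ in $\mathrm{End}(E_0)$ only gives an embedding of $\bZ[\sqrt{-p}]$. It extends to $O_p$ only if $(1+u\pi)/2\in\mathrm{End}(E_0)$, i.e.\ only if $u\pi$ acts trivially on $E_0[2]$; this is the condition the paper checks in Proposition~\ref{end1}. That condition is exactly what separates roots of $P_p$ from roots of $P_{4p}$, and without it several of your claims fail.

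\begin{itemize}
\item The claim ``$O_p$ embeds iff $\bar j\in\fp$'' is false. For $p=11$, $j=0$ is supersingular and $\fp$-rational, yet $P_{11}(X)=X+32768\equiv X-1728 \pmod{11}$.
\item At $j=0$ (with $p\equiv 11 \bmod 12$), all six elements $u\pi$, $u\in\mu_6$, satisfy $(u\pi)^2=-p$. But each acts on $E_0[2]$ as a transposition, so $j=0$ is never a root of $P_p \bmod p$. The multiplicity $2$ you predict there is its multiplicity in $P_{4p}$.
\item Most seriously, the criterion you state gives the wrong answer at $j=1728$. Since $\pi i=-i\pi$, also $(i\pi)^2=-p$. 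So the trace-zero norm-$p$ elements are $\pm\pi,\pm i\pi$, which form two classes under conjugation by $\langle i\rangle$ and would give multiplicity $2$. Only the $2$-torsion condition removes $\pm i\pi$: on $y^2=x^3-x$, $\pi$ fixes $E_0[2]$ while $i$ swaps $(\pm1,0)$. That leaves the single class $\{\pm\pi\}$, hence the simple factor $X-12^3$.
\end{itemize}

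With this correction your strategy works. The roots of $P_p \bmod p$ are the supersingular $\bar j\in\fp$ having a model over $\fp$ with $E_0[2]\subset E_0(\fp)$. For $\bar j\neq 1728$ the embeddings are $\sqrt{-p}\mapsto\pm\pi$, which are not conjugate; for $\bar j=1728$ there is one class.

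The argument still rests on the claim that the multiplicity of $\bar j$ equals the number of $\Aut(E_0)$-conjugacy classes of optimal embeddings (all are normalized, since $p$ ramifies). Two problems remain there. First, your formula divides by $|\Aut(E_0)/\{\pm1\}|$ a second time, which would give $1/2$ at $1728$. Second, the injectivity half (distinct CM points never reduce to the same class) is a uniqueness-of-lifts statement from Gross--Zagier/Lubin--Tate theory, and you must cite it rather than leave it open.

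The paper gives no proof of its own; it cites Elkies and Kaneko, and Kaneko's route avoids that injectivity altogether:
\begin{itemize}
\item Proposition~\ref{mojularandhirbert} gives $\Phi_p(X,X)=c\,P_p(X)P_{4p}(X)T(X)^2$, because only $D=p,4p$ are divisible by $p$ and $r(p,D)$ is even for every other $D$.
\item Proposition~\ref{prop:kronecker1} gives $\Phi_p(X,X)\equiv-(X^p-X)^2 \pmod p$.
\item Hence every root satisfies $m_{P_p}+m_{P_{4p}}+2m_T=2$. A root of $P_p$ therefore has multiplicity $2$ unless it is also a root of $P_{4p}$, in which case it has multiplicity $1$.
\item The $2$-torsion criterion above then shows $1728$ is the only common root.
\end{itemize}
This is the same mechanism the paper runs for $P_{3p}$ in Section~\ref{observationoffactorization}.
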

Kaneko \cite[Appendix]{kaneko} gives another proof of this proposition using the Kronecker \textcolor{black}{relations (Propositions \ref{prop:kronecker1} and \ref{mojularandhirbert} below) on} the Hilbert class polynomials and the modular polynomials. Here, the modular polynomial $\Phi_n(X,Y)$ is a polynomial in $X$ and $Y$ characterized by
the condition that $\Phi_n(j(E_1),j(E_2))=0$
if and only if there exists an isogeny $E_1 \to E_2$
of elliptic curves whose kernel is a cyclic group of order $n$.

\begin{prop}[Lang:{\cite[5,\S2]{SL}}]
\label{prop:kronecker1}
    Let $p$ be a prime and $\Phi_p(X,Y)$  a modular polynomial. Then we have
    \[
    \Phi_p(X,j) \equiv(X-j^p)(X^p-j) \pmod p.
    \]
\end{prop}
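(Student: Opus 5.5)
The congruence $\Phi_p(X,j)\equiv (X-j^p)(X^p-j)\pmod p$ is the classical Kronecker congruence. I would prove it by comparing the roots of $\Phi_p(X,j(\tau))$ with those of the right-hand side, as power series in $q=e^{2\pi i\tau}$ with coefficients in $\bZ[\zeta_p]$, modulo a prime above $p$. Since $\Phi_p(X,Y)$ has integer coefficients, it suffices to prove the congruence as polynomials in $X$ over $\bZ[[q]][q^{-1}]$ with $j=j(\tau)$ transcendental. Specializing $Y$ then gives the identity in $\bZ[X,Y]$ reduced mod $p$.

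\emph{Step 1: the roots of $\Phi_p(X,j(\tau))$.} Recall that $\Phi_p(X,j(\tau))=\prod_{\gamma}(X-j(\gamma\tau))$, where $\gamma$ runs over the $p+1$ matrices
\[
\begin{pmatrix}p&0\\0&1\end{pmatrix},\qquad \begin{pmatrix}1&b\\0&p\end{pmatrix}\ (0\le b<p),
\]
which represent the cyclic sublattices of index $p$. The corresponding roots are $j(p\tau)$ and $j((\tau+b)/p)$.

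\emph{Step 2: reduce each root mod a prime above $p$.} Write $j(\tau)=\sum_{n\ge -1}c_nq^n$ with $c_n\in\bZ$. Then
\[
j(p\tau)=\sum c_n q^{pn}\equiv \Bigl(\sum c_nq^n\Bigr)^p = j(\tau)^p \pmod p,
\]
using $c_n^p\equiv c_n$ (Fermat) and the Frobenius on $\F_p[[q]][q^{-1}]$. Next, $j((\tau+b)/p)=\sum c_n\zeta_p^{bn}q^{n/p}$. Let $\mathfrak{p}=(1-\zeta_p)$. Since $\zeta_p\equiv 1\pmod{\mathfrak{p}}$, all $p$ of these series are congruent mod $\mathfrak{p}$ to $g:=\sum c_nq^{n/p}$, and $g^p\equiv j(\tau)\pmod p$ by the same Frobenius argument.

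\emph{Step 3: assemble the product.} Combining Steps 1 and 2 gives
\[
\Phi_p(X,j)\equiv (X-j^p)(X-g)^p\equiv (X-j^p)(X^p-g^p)\equiv (X-j^p)(X^p-j)\pmod{\mathfrak{p}}.
\]
Both sides lie in $\bZ[[q]][q^{-1}][X]$, and $\mathfrak{p}\cap\bZ=p\bZ$, so the congruence holds mod $p$. Since the coefficients of $\Phi_p(X,Y)$ are polynomials in $Y=j$, and $j(\tau)$ is transcendental (the $q$-expansion determines the polynomial), the congruence holds in $\bZ[X,Y]$.

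\emph{Main obstacle.} The delicate step is Step 2 for the fractional-power expansions. One has to pass carefully between $\bZ[\zeta_p]$ and $\bZ$ and handle the ring $\bZ[\zeta_p]((q^{1/p}))$, checking that each step of the congruence respects these rings. The remaining steps are formal.
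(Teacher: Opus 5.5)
Your proposal is correct and is essentially the argument in Lang's \emph{Elliptic Functions}, Ch.~5, \S2, which the paper cites without reproducing. That argument expands the $p+1$ roots $j(p\tau)$ and $j((\tau+b)/p)$ in $q$, reduces modulo $(1-\zeta_p)$, applies Frobenius, and descends to $p$ via $(1-\zeta_p)\cap\bZ=p\bZ$. The one step to state more precisely is the last: what you need is that the reduction $\bar{j}=q^{-1}+\cdots\in\F_p((q))$ is transcendental over $\F_p$ (its leading coefficient is $1$), not merely that $j$ is transcendental over $\bQ$, and it is this that lifts the congruence from $Y=j(\tau)$ to $\bZ[X,Y]$.
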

There is a generalization of \textcolor{black}{this proposition.}
\begin{prop}
\label{generalkronecker}
    Let $p$ be a prime and $n$ an integer with $\gcd(n,p)=1$. Then we have
\[
\Phi_{pn}(X,j) \equiv \Phi_n(X,j^p)\Phi_n(X^p,j) \pmod p.
\]
\end{prop}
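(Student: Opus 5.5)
Both sides are polynomials in $X$ with coefficients in $\bZ[j]$, so it suffices to compare their roots with multiplicities. For this I would pass to a moduli/lattice description. Over $\bC$, write $j=j(\tau)$. Then
\[
\Phi_m(X,j(\tau))=\prod_{\gamma}\bigl(X-j(\gamma\tau)\bigr),
\]
where $\gamma$ runs over the primitive integer matrices $\begin{pmatrix} a&b\\0&d\end{pmatrix}$ with $ad=m$, $0\le b<d$ and $\gcd(a,b,d)=1$. Equivalently, $\gamma$ runs over the cyclic sublattices of index $m$ in $\bZ+\bZ\tau$.

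Since $\gcd(n,p)=1$, a cyclic subgroup of order $pn$ is uniquely the product of a cyclic subgroup of order $p$ and one of order $n$. Hence every cyclic $pn$-isogeny factors uniquely as a cyclic $p$-isogeny followed by a cyclic $n$-isogeny, and we get the identity
\[
\Phi_{pn}(X,j)=\prod_{j'}\Phi_n(X,j'),
\]
with $j'$ running over the $p+1$ roots of $\Phi_p(Y,j)$ counted with multiplicity. More precisely, this is the resultant identity
\[
\Phi_{pn}(X,j)=\operatorname{Res}_Y\bigl(\Phi_p(Y,j),\,\Phi_n(X,Y)\bigr),
\]
and I would check it at the level of lattices first. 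The check amounts to verifying that the index-$pn$ cyclic sublattices correspond bijectively to the pairs (index-$p$ sublattice $L'$, cyclic index-$n$ sublattice of $L'$). This uses CRT on the matrix forms $\begin{pmatrix} a&b\\0&d\end{pmatrix}$, and it holds because $p\nmid n$. The resultant is a polynomial identity in $\bZ[X,j]$, so it survives reduction mod $p$.

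Now I reduce mod $p$. By Proposition~\ref{prop:kronecker1},
\[
\Phi_p(Y,j)\equiv (Y-j^p)(Y^p-j) \pmod p.
\]
Since the resultant is multiplicative in its first argument and the leading coefficients are monic,
\[
\operatorname{Res}_Y\bigl((Y-j^p)(Y^p-j),\,\Phi_n(X,Y)\bigr)=\Phi_n(X,j^p)\cdot \operatorname{Res}_Y\bigl(Y^p-j,\,\Phi_n(X,Y)\bigr).
\]
It remains to identify the second factor with $\Phi_n(X^p,j)$ mod $p$. Over a field of characteristic $p$ containing a root $t$ with $t^p=j$, we have $Y^p-j=(Y-t)^p$. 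Therefore
\[
\operatorname{Res}_Y\bigl((Y-t)^p,\,\Phi_n(X,Y)\bigr)=\Phi_n(X,t)^p .
\]
Because $\Phi_n$ has integer coefficients, Frobenius gives
\[
\Phi_n(X,t)^p=\Phi_n(X^p,t^p)=\Phi_n(X^p,j).
\]

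I expect the main obstacle to be the first step, the resultant factorization $\Phi_{pn}=\operatorname{Res}_Y(\Phi_p,\Phi_n)$ over $\bZ$. One must check carefully that the composition of a cyclic $p$-isogeny and a cyclic $n$-isogeny is cyclic of order $pn$, and that no duplicates occur. Both facts follow from the coprimality of $p$ and $n$: the $p$-part and $n$-part of the kernel are determined independently. One must also keep track of the normalization, namely that both sides are monic of the same degree $\psi(pn)=\psi(p)\psi(n)=(p+1)\psi(n)$. The degree count then confirms that the equality holds with exact multiplicities rather than only up to the set of roots.
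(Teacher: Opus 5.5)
Your proof is correct. The paper gives no argument for this proposition; it only refers to Igusa (p.~569) and Deuring.

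You instead derive the statement from the prime-level congruence of Proposition~\ref{prop:kronecker1}, which the paper already quotes, using three ingredients:
\begin{enumerate}
\item the coprime composition identity $\Phi_{pn}(X,j)=\operatorname{Res}_Y\bigl(\Phi_p(Y,j),\Phi_n(X,Y)\bigr)$, which rests on the unique splitting of a cyclic subgroup of order $pn$ into its $p$-part and $n$-part;
\item multiplicativity of the resultant in its first argument;
\item the Frobenius identity $\Phi_n(X,t)^p=\Phi_n(X^p,t^p)$ in $\fp[X,t]$ with $t^p=j$.
\end{enumerate}
Your route gives a short, self-contained argument. It shows that the general congruence is a purely algebraic consequence of the case $n=1$ together with the coprime factorization of cyclic isogenies. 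The citation is shorter but leaves the mechanism to the references.

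When you write it up, make two points explicit. First, taking the resultant commutes with reduction modulo $p$. This holds because $\Phi_p(Y,j)$ is monic of degree $p+1$ in $Y$ both before and after reduction. Hence $\operatorname{Res}_Y$ is the norm $\prod_{j'}\Phi_n(X,j')$, which is a polynomial expression in the coefficients. Second, your opening ``compare roots'' remark is only needed over $\bC$. There the resultant identity holds for every value $j=j(\tau)$, and hence in $\bZ[X,j]$. The computation modulo $p$ is purely algebraic: it takes place in $\fp[X,t]$ and descends to $\fp[X,j]$, since both sides of the final identity already lie there.
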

\begin{proof}
    Igusa \cite[p.569]{Igusa_ell} or Deuring \cite{Deu}. 
\end{proof}
We further recall the relation between the Hilbert class polynomials and the modular polynomials.  The next theorem tells us that the modular polynomials can be written as a product of the Hilbert class polynomials.
\begin{prop}[Lang:{\cite[10, Appendix]{SL}} ]
\label{mojularandhirbert}
Let $r(n,D)$ be the number of primitive $O_D$-equivalence classes of elements $\mu\in O_D$ such that $N(\mu)=n$. Here $\mu$ is called primitive if it does not lie in $nO_D$ for any positive integer $n\ne1$ and two elements of $O_D$ are said to be $O_D$-equivalent if their quotient is a unit in $O_D$. Then there exists a constant $c_n$ such that
\[
\Phi_n(X,X)=c_n \prod_{ D }P_D(X)^{r(n,D)}.\]

\end{prop}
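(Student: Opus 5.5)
Since Proposition \ref{mojularandhirbert} is the classical Kronecker relation, I would prove it by computing the roots of $\Phi_n(X,X)$ over $\bC$ with multiplicities, and then comparing both sides as polynomials. I take $n>1$ not a square, so that $\Phi_n(X,X)$ is not identically zero and has degree equal to the leading-coefficient degree that fixes $c_n$.

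First I write $\Phi_n(X,j(\tau))=\prod_{\gamma}(X-j(\gamma\tau))$, where $\gamma$ runs over the primitive integral matrices $\begin{pmatrix}a&b\\0&d\end{pmatrix}$ with $ad=n$, $0\le b<d$ and $\gcd(a,b,d)=1$. Specialising $X=j(\tau)$ gives $\Phi_n(j(\tau),j(\tau))=\prod_\gamma (j(\tau)-j(\gamma\tau))$. A factor vanishes exactly when $\gamma\tau=\sigma\tau$ for some $\sigma\in\SL_2(\bZ)$. Equivalently, $\tau$ is a fixed point of the primitive matrix $\sigma^{-1}\gamma$ of determinant $n$. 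Such a fixed point satisfies a quadratic equation, so $\tau$ is an imaginary quadratic CM point. Hence $E_\tau=\bC/(\bZ+\bZ\tau)$ has an endomorphism $\mu$ of norm $n$ which is primitive, in the sense that it is not divisible by any integer $m>1$. Conversely, such a primitive $\mu$ yields a cyclic $n$-isogeny from $E_\tau$ to itself, so $j(\tau)$ is a root. Writing $O_D$ for the endomorphism ring of $E_\tau$, the roots of $\Phi_n(X,X)$ are therefore exactly the $j$-invariants with CM by orders $O_D$ containing a primitive element of norm $n$. Each such root is a root of some $P_D$, and the $P_D$ are pairwise coprime.

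The second step is to identify the multiplicities. The number of $\gamma$ producing a given fixed point equals the number of cyclic subgroups of order $n$ that are kernels of endomorphisms. These correspond bijectively to primitive $\mu\in O_D$ of norm $n$, modulo units, because $\ker\mu=\ker\mu'$ forces $\mu'=u\mu$ with $u\in O_D^\times$. This count is $r(n,D)$.

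I then need that each zero is simple as a zero in the uniformizing parameter $j-j(\tau_0)$, so that its order matches this count. This is the main obstacle, and it is the classical computation of Kronecker and Hurwitz. Near $\tau_0$, the difference $j(\tau)-j(\gamma\tau)$ vanishes to first order in $\tau-\tau_0$, since the derivative of $\tau-\sigma^{-1}\gamma\tau$ is nonzero at a non-real fixed point. One must correct for the ramification of $j$ at $\tau_0=i,\rho$, where $j-j(\tau_0)$ vanishes to order $2$ or $3$. The stabilizer then enlarges correspondingly, and the extra units in $O_D^\times$ compensate exactly in the equivalence count. I would handle $D=3,4$ separately, checking directly that the orders agree. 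I would also verify that no non-primitive coincidences occur: a non-primitive $\mu$ factors through multiplication by an integer, so its kernel is not cyclic.

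Finally, both sides are polynomials in $X$ with the same roots and multiplicities, so they differ by a constant $c_n$. This constant comes from the leading coefficient of $\Phi_n(X,X)$ and is $\pm$ a product of primes dividing $n$.
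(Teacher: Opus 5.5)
The paper gives no proof of this proposition beyond the citation to Lang. Your outline is the standard Kronecker--Hurwitz argument behind that reference. Two of your steps are correct as stated: the description of the roots, and the identification of the number of vanishing factors $j(\tau)-j(\gamma\tau)$ at $\tau_0$ with $r(n,D)$ (cyclic kernels correspond to primitive $\mu$ of norm $n$ modulo $O_D^\times$).

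\textbf{The gap: multiplicity at $\tau_0\in\{i,\rho\}$.} This step is not yet a proof, and the mechanism you describe is not the right one.
\begin{itemize}
\item Since $j'(\tau_0)=0$ there, no factor vanishes to first order.
\item The units do not compensate for anything. Your count of vanishing factors is already uniform in $D$: each coset $\SL_2(\bZ)\gamma$ with $j(\gamma\tau_0)=j(\tau_0)$ contains exactly $|O_D^\times|$ matrices fixing $\tau_0$. That is where the units enter, for every $D$.
\end{itemize}
What you need is that each vanishing factor has order exactly $e$ in $\tau-\tau_0$, where $e\in\{1,2,3\}$ is the order of $j-j(\tau_0)$ at $\tau_0$. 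Here is how to get it.
\begin{itemize}
\item Write $j(\tau)-j(\tau_0)=\alpha(\tau-\tau_0)^e+\cdots$ with $\alpha\neq 0$. Let $M=\sigma^{-1}\gamma=\begin{pmatrix}a&b\\c&d\end{pmatrix}$ fix $\tau_0$, and put $\mu=c\tau_0+d\in O_D$.
\item Then $M'(\tau_0)=n/\mu^2=\bar\mu/\mu$, so $j(\tau)-j(M\tau)=\alpha\bigl(1-(\bar\mu/\mu)^e\bigr)(\tau-\tau_0)^e+\cdots$.
\item Suppose $(\bar\mu/\mu)^e=1$. Since $|O_D^\times|=2e$, the $e$-th roots of unity are exactly the $u^{-2}$ with $u\in O_D^\times$, so $\bar\mu/\mu=u^{-2}$ for some unit $u$.
\item Then $\overline{u^{-1}\mu}=u^{-1}\mu$ is real, so it lies in $O_D\cap\mathbb{R}=\bZ$. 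Hence $\mu\in u\bZ$, which is not primitive unless $n=1$.
\end{itemize}
So every vanishing factor contributes exactly $e$ to the order of $\Phi_n(j(\tau),j(\tau))$ at $\tau_0$, i.e.\ exactly $1$ to the multiplicity of $X=j(\tau_0)$. The multiplicity is therefore $r(n,D)$ for all $D$ at once. Your generic computation is just the case $e=1$.

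\textbf{Smaller point 1: the restriction to non-square $n$ is unnecessary.} It rests on a false premise. $\Phi_n$ classifies cyclic isogenies, so $\Phi_n(X,X)\not\equiv 0$ for every $n>1$. For $n=k^2$, the factors with $a=d=k$ have leading $q$-term $(1-\zeta_k^{-b})q^{-1}$ with $\gcd(b,k)=1$, which is nonzero. Nothing in your argument uses non-squareness. The computation above and the non-degeneracy of the fixed-point quadratic only use primitivity and $n>1$. So you obtain the proposition in the generality stated, although the paper only applies it with $n=3$ and $n=3p$.

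\textbf{Smaller point 2: the constant.} For non-square $n$ it is $c_n=\pm1$, since every factor has leading $q$-coefficient $1$ or a root of unity. For $n=k^2$ an extra factor $\prod_{b\in(\bZ/k\bZ)^\times}(1-\zeta_k^{b})$ appears. This equals $\ell$ if $k$ is a power of a prime $\ell$, and $1$ otherwise.
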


\subsection{Singular moduli by Gross-Zagier}\label{subsec:GZ}

Gross-Zagier \cite{GZ} computed a power of
the resultant of $P_{D_1}(X)$ and $P_{D_2}(X)$ for fundamental discriminants $-D_1,-D_2$ with $\gcd(D_1,D_2)=1$.
Let $w_i$ be the number of roots of unity
in $\cO_{D_i}$ for $i=1,2$ (Note $w_i=6$ for $D_i=3$ and $w_i=2$ for others).
They studied
\begin{equation}
J(-D_1,-D_2) = \prod_{E_1,E_2}(j(E_1)-j(E_2))^{\frac{4}{w_1w_2}},
\end{equation}
where $E_i$ runs over the isomorphism classes of elliptic curves \textcolor{black}{whose endomorphism ring is isomorphic to $\cO_{D_i}$.}  They showed that $J(-D_1,-D_2)^2$ is an integer and gave a formula for this integer,
which enables us to know its prime factorization.
Suppose that a prime $p$ is a divisor of $J(-D_1,-D_2)^2$.
Then it implies that there exists an $(E_1,E_2)$ such that the reductions  of $E_1$ and $E_2$ at some prime ideal $\frak p$ dividing $p$, considered over a number field,
are isomorphic to each other.

This is a just application of Gross-Zagier \cite[Thm.\ 1.3 and Lem.\ 3.6]{GZ}, but we state a description of the weighted sum (by the cardinality of the automorphism groups over $\overline{\F}_p$) of the numbers of reductions modulo $p$ of elliptic curves with complex multiplication of type $D_i$ for $i=1,2$ with $\left(\frac{-D_1}{p}\right) = -1$ and $p|D_2$. Note
the $j$-invariants of the reductions belong to ${\F}_{p^2}$,
since they are supersingular.

\begin{thm}\label{thm:GZ_for_3p}
Assume  that $-D_1$ and $-D_2$ are fundamental and coprime with $\left(\frac{-D_1}{p}\right) = -1$ and $p|D_2$. 
Then  
\begin{equation}\label{eq:GZ_for_3p}
\frac{2}{w_1w_2}
\sum_{j}
m_j\cdot \#\Aut(E_j)=    \ord_p J(-D_1, -D_2)^2
\end{equation}
with the summation over distinct roots $j\in\F_{p^2}$ of the greatest common divisor of $P_{D_1}(X) \bmod p$ and $P_{D_2}(X) \bmod p$, where $m_j$ is the product of the multiplicities at $j$ of $P_{D_1}(X) \bmod p$ and $P_{D_2}(X) \bmod p$.
\end{thm}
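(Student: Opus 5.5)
The plan is to reduce the statement to a local computation for individual pairs of CM points and then use Gross--Zagier's formula \cite[Lem.\ 3.6]{GZ} for the valuation of a difference of singular moduli. First rewrite $J(-D_1,-D_2)$ in terms of a resultant. Since $E_i$ runs over all isomorphism classes with $\operatorname{End}\cong\cO_{D_i}$, its $j$-invariants are exactly the roots of $P_{D_i}(X)$, each occurring once. Hence
\[
J(-D_1,-D_2)^2=\pm\,\mathrm{Res}(P_{D_1},P_{D_2})^{8/(w_1w_2)},
\qquad
\ord_p J(-D_1,-D_2)^2=\frac{8}{w_1w_2}\,\ord_p \mathrm{Res}(P_{D_1},P_{D_2}).
\]
Now fix a number field $L$ containing all roots of $P_{D_1}P_{D_2}$ and a prime $\mathfrak P\mid p$ of $L$, with valuation $v$ normalized so that $v(p)=1$. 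Then $\ord_p\mathrm{Res}=\sum_{(j_1,j_2)}v(j_1-j_2)$, summing over all pairs of roots. Since the $j_i$ are integral, only pairs whose reductions coincide contribute. We group these pairs by their common reduction $j\in\F_{p^2}$. By Hensel-type counting (distinct roots in characteristic $0$ reducing to the same $j$), the number of pairs with common reduction $j$ is exactly $m_j$, the product of the multiplicities of $j$ as a root of $P_{D_1}\bmod p$ and of $P_{D_2}\bmod p$.

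The key step is to show that every pair $(j_1,j_2)$ with common reduction $j$ satisfies
\[
v(j_1-j_2)=\tfrac14\,\#\Aut(E_j).
\]
Granting this, we get $\ord_p\mathrm{Res}=\frac14\sum_j m_j\#\Aut(E_j)$. Multiplying by $8/(w_1w_2)$ gives exactly \eqref{eq:GZ_for_3p}.

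To prove the key identity, use \cite[Lem.\ 3.6]{GZ}. Over the completion $W$ of the maximal unramified extension adjoined with the relevant ramified generator and uniformizer $\pi$, it says
\[
\ord_\pi(j_1-j_2)=\tfrac12\sum_{n\ge1}\#\mathrm{Iso}_{W/\pi^n}(E_1,E_2).
\]
The hypothesis $\bigl(\frac{-D_1}{p}\bigr)=-1$ means $p$ is inert in $K_1=\bQ(\sqrt{-D_1})$, so $E_1$ has good supersingular reduction and is defined over an unramified extension. The hypothesis $p\mid D_2$ means $p$ ramifies in $K_2$, so $E_2$ is defined over a ramified quadratic extension with $e=2$, i.e.\ $v=\tfrac12\ord_\pi$.

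It remains to show that $\mathrm{Iso}_{W/\pi^n}(E_1,E_2)=\emptyset$ for $n\ge2$. Every isomorphism modulo $\pi$ then counts, so $\#\mathrm{Iso}_{W/\pi}=\#\Aut(E_j)$. This yields $\ord_\pi(j_1-j_2)=\tfrac12\#\Aut(E_j)$, hence $v(j_1-j_2)=\tfrac14\#\Aut(E_j)$, as claimed.

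I expect the vanishing for $n\ge2$ to be the main obstacle. The approach is to use Gross--Zagier's deformation description, as in \cite[Thm.\ 1.3 and its proof]{GZ} via Gross's quasi-canonical lifting. An isomorphism surviving to level $n$ is an element $f$ of the maximal order of the quaternion algebra $B_{p,\infty}$ which conjugates the two embeddings $\cO_{D_1},\cO_{D_2}\hookrightarrow\operatorname{End}(E_j)$ to level $n$. The level to which it lifts is governed by $\ord_p$ of a norm expression of the form $(D_1D_2-x^2)/4$ attached to $f$. Because $K_2$ is ramified at $p$ while $K_1$ is inert, this valuation is forced to be exactly $1$, which corresponds to level $n=1$ in $\pi$-adic terms. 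In practice I would simply quote the local part of \cite[Thm.\ 1.3]{GZ}, which already expresses $\ord_p J^2$ as a sum over such $x$ with $\ord_p$ contributions. I would then check that its $p$-part, specialized to $(-D_1/p)=-1$ and $p\mid D_2$, matches the count above with weights $\#\Aut(E_j)$ and multiplicities $m_j$.
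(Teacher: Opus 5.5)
Your architecture is the paper's: the Gross--Zagier identity $\ord_\pi(j_1-j_2)=\frac12\sum_{n\ge1}\#\mathrm{Iso}_{W/\pi^n}(E_1,E_2)$ over the ring $W$ of ramification index $2$, the vanishing of $\mathrm{Iso}_{W/\pi^n}$ for $n\ge 2$, and summation over pairs of roots grouped by common reduction, which produces $m_j$. Your bookkeeping of $m_j$ and of $v=\frac12\ord_\pi$ is more explicit than the paper's. What you do not prove is the vanishing for $n\ge2$, which is the only non-formal input.

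Your fallback does not work. Quoting Theorem 1.3 of \cite{GZ} and checking that its $p$-part equals $\frac{2}{w_1w_2}\sum_j m_j\#\Aut(E_j)$ is circular. The $x$-sum is a global quantity that says nothing about individual pairs, and the $m_j$ are exactly the unknowns that Theorem \ref{thm:GZ_for_3p} is later used to determine (Proposition \ref{GZmultiplicity}).

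Your other route asserts that $\ord_p\bigl((D_1D_2-x^2)/4\bigr)=1$ ``corresponds to level $n=1$ in $\pi$-adic terms''. That translation is the whole point. Since $p\in\pi^2W$, an isomorphism of curves from the unramified ring that lifts modulo $p$ already lifts modulo $\pi^2$. Two $\pi$-adic levels would double the right-hand side. For $p=13$, $D_1=8$, $D_2=39$, the doubled formula would force $m_j=2$. But $P_{39}(X)\equiv(X-5)^4 \bmod 13$, since $5\equiv 8000$ is the only supersingular $j$-invariant mod $13$ and $h(-39)=4$.

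The paper gets the vanishing by citing Dorman \cite[Theorem 3.13]{Dorman}. There is also a short direct proof for odd $p$:

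Let $W_0\subset W$ be the completion of the maximal unramified extension of $\mathbb{Z}_p$, over which $E_1$ has a smooth model. Since $p\in\pi^2W$, the map $W_0\to W/\pi^2\cong\bar{\mathbb F}_p[\epsilon]/(\epsilon^2)$ factors through $\bar{\mathbb F}_p$. So $E_1\bmod\pi^2$ is the constant deformation of $\bar E$. Its endomorphisms are base changes of endomorphisms of $\bar E$, so they act on $\mathrm{Lie}$ by scalars in $\bar{\mathbb F}_p$. Hence any endomorphism $\beta$ with $\beta^2=[-D_2]$ acts on $\mathrm{Lie}$ by $0$.

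On $E_2\bmod\pi^2$, the CM endomorphism $[\sqrt{-D_2}]$ acts on $\mathrm{Lie}$ by $\sqrt{-D_2}\in W$. This has $\pi$-valuation $1$ because $p\,\|\,D_2$, so it is nonzero modulo $\pi^2$.

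An isomorphism $\phi\colon E_1\to E_2$ over $W/\pi^2$ would make $\beta=\phi^{-1}[\sqrt{-D_2}]\phi$ an endomorphism of $E_1\bmod\pi^2$ with $\beta^2=[-D_2]$ and nonzero action on $\mathrm{Lie}$, a contradiction. Hence $\mathrm{Iso}_{W/\pi^n}(E_1,E_2)=\emptyset$ for $n\ge2$, each pair contributes $v(j_1-j_2)=\frac14\#\Aut(E_j)$, and the rest of your computation goes through.
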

\begin{proof}
Let $K={\mathbb Q}(\sqrt{-D_2})$.
Let $j$ be a root of $P_{D_2}(X)$. Note that $j$ belongs to the Hilbert class field $H$ of $K$. Let $\frak p$ be a finite place of $H$ lying over $p$
and $A$ the completion of the maximal unramified extension of the ring of $\frak p$-adic integers in $H$. In this case, $W:=A[w]$ for any $w$ which satisfies an integral quadratic equation of discriminant $-D_1$
is equal to $A$. Consider an element of $H$
\[
\alpha_j = \prod_{j_1}(j-j_1)^{\frac{4}{w_1w_2}},
\]
where  $j_1$ runs over all roots of $P_{D_1}(X)$.
Let $\pi$ be a uniformizer of $W$. Let $E_j$ denote an elliptic curve over $W$ of $j$-invariant $j$. 
By Gross-Zagier \cite[3.2]{GZ}, we have
\begin{equation}\label{eq:ord_alpha}
{\rm ord}_{\frak p}(\alpha_j) = \frac{4}{w_1w_2}\sum_{j_1}\sum_{n=1}^\infty \frac{1}{2}\# {\rm Iso}_{W/\pi^n}(E_j,E_{j_1}),
\end{equation}
where $j_1$ runs over all roots of $P_{D_1}(X)$ and ${\rm Iso}_{W/\pi^n}(E,E')$ is the set of isomorphisms from $E$ to $E'$ over $W/\pi^n$.
According to Dorman \cite[Theorem 3.13]{Dorman},  ${\rm Iso}_{W/\pi^n}(E_j,E_{j_1})$ is empty if $n\ge 2$. 
Hence, considering the sum of \eqref{eq:ord_alpha} over all roots $j$ of $P_{D_2}(X)$, we have the formula \eqref{eq:GZ_for_3p}.
\end{proof}

Here we write the formula of Gross-Zagier:
\[
\ord_p J(-D_1, -D_2)^2 = 
\sum_{\begin{split}
    x\in\Z \text{ s.t.\;} x^2 < D_1D_2\\
    x^2\equiv D_1D_2 \bmod 4
\end{split}}
\ord_p F\left(\frac{D_1D_2-x^2}{4}\right)
\]
with 
\[
\ord_p F(m) = (a+1)(b_1+1)\cdots (b_r+1)
\]
if $m$ has the prime factorization of the form
\[
m=p^{2a+1}\ell_1^{2a_1}\cdots \ell_s^{2a_s}q_1^{b_1}\cdots q_r^{b_r}
\]
($a,a_i,b_i\in\N$)
with $\varepsilon(p)=\varepsilon(\ell_i)=-1$
and $\varepsilon(q_i)=1$
and $\ord_p F(m) = 0$ otherwise. Here
$\displaystyle \varepsilon(\ell) = \left(\frac{-D_1}{\ell}\right)$
if $(\ell,D_1)=1$ and $\displaystyle \varepsilon(\ell) = \left(\frac{-D_2}{\ell}\right)$
if $(\ell,D_2)=1$.

\textcolor{black}{By applying the theorem above, it is sometimes possible to compute the multiplicities $m_j$ occurring in the theorem. The next proposition provides such an example, and it will be used in Proposition \ref{multiplicity}.} Assume that $p\equiv 1 \bmod 4$ until the end of this subsection. We consider the case of $D_2=3p$ and write $D_1$ as $D$. 

\begin{prop}
\label{GZmultiplicity}
\begin{enumerate}
\item[(1)] For $D=8$ and $j=8000$ (the root of $P_D(j)$),
we have
\[
m_j = \begin{cases}
4 & \text{ if } p \equiv 5 \pmod 8 \text{ and } p \ne 5,\\
2 & \text{ if } p = 5,\\
0 & \text{ otherwise. }\\
\end{cases}
\]
\item[(2)] For $D=20$ and a root $j$ of $P_D(j)$, we have
\[
m_j = \begin{cases}
2 & \text{ if } p \equiv 0, 2,3 \pmod 5 \text{ and } p\ne 13,\\
4 & \text{ if } p=13,\\
0 & \text{ otherwise.} \\
\end{cases}
\]
\item[(3)] For $D=35$, we have
\[
m_j = \begin{cases}
2 & \text{ if } p = 5, 37, 41, 53, 89, 101,\\
4 & \text{ if } p = 61,\\
0 & \text{ otherwise. }\\
\end{cases}
\]
\end{enumerate}
\end{prop}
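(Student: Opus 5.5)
The plan is to derive each case from Theorem \ref{thm:GZ_for_3p} with $D_1=D\in\{8,20,35\}$ and $D_2=3p$, and to treat a few exceptional primes by direct computation. Since $p\equiv 1 \bmod 4$, we have $-3p\equiv 1 \bmod 4$, so $-3p$ is a fundamental discriminant. Likewise $-8$, $-20$ and $-35$ are fundamental. Also $w_1=w_2=2$, so the left-hand side of \eqref{eq:GZ_for_3p} becomes $\frac12\sum_j m_j\,\#\Aut(E_j)$.

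\textbf{Step 1: vanishing cases.} First I dispose of the cases where the hypothesis $\left(\frac{-D}{p}\right)=-1$ fails. If $\left(\frac{-D}{p}\right)=1$, then $p$ splits in $\mathbb{Q}(\sqrt{-D})$ and the reductions of the roots of $P_D(X)$ are ordinary. The roots of $P_{3p}(X) \bmod p$ are supersingular by Lemma \ref{nonquadraticp}, so there is no common root and $m_j=0$.

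For $p\equiv 1 \bmod 4$ the symbols are:
\begin{itemize}
\item[] $D=8$: $\left(\frac{-2}{p}\right)=-1$ exactly when $p\equiv 5 \bmod 8$;
\item[] $D=20$: $\left(\frac{-5}{p}\right)=\left(\frac{5}{p}\right)=-1$ exactly when $p\equiv 2,3 \bmod 5$;
\item[] $D=35$: $\left(\frac{-35}{p}\right)=-1$ describes a set of residue classes mod $35$.
\end{itemize}
This yields the ``otherwise $0$'' clauses, except where finitely many primes in the admissible classes still give $0$ from Step 2. The prime $p=5$, where $\gcd(D,3p)\neq 1$ for $D=20,35$, falls outside the theorem. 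There, and for $D=8$ with $p=5$, I would compute directly: factor $P_{15}(X)$ and $P_D(X)$ modulo $5$ from their known integer coefficients and read off the multiplicities of the common roots.

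\textbf{Step 2: the Gross--Zagier sum.} In the admissible cases I evaluate $\ord_p J(-D,-3p)^2$ with the stated formula. Write $m=(3Dp-x^2)/4$ with $x^2<3Dp$. A nonzero contribution requires $p^{2a+1}\,\|\, m$, which forces $p\mid x$. Writing $x=pt$, we need $t^2<3D/p$, so only a tiny finite list of $t$ survives, and for large $p$ only $t=0$. For each surviving $t$ I factor $m=p(3D-pt^2)/4$ and check the parity conditions using the rule for $\varepsilon(\ell)$:
\begin{itemize}
\item[] $\varepsilon(\ell)=\left(\frac{-D}{\ell}\right)$ for $\ell\nmid D$;
\item[] $\varepsilon(\ell)=\left(\frac{-3p}{\ell}\right)$ for $\ell\mid D$, which brings in the congruence class of $p$.
\end{itemize}

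For $D=8$ the terms are $t=0$, giving $m=6p$, and $t=\pm1,\pm2,\pm3,\pm4$ only for very small $p$. This should yield total $4$ generically. For $D=35$, the bound $t^2<105/p$ forces $t=0$ once $p>105$, so only primes $p<105$ can have extra terms with $t\ne 0$. These are exactly the finitely many primes listed, and checking each one by hand (or by a short computation) gives the stated values $2$ or $4$.

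\textbf{Step 3: from the total to $m_j$.} Next I convert the total into individual $m_j$. For $D=8$ there is a single root, $j=8000$. For $D=20$ and $D=35$ the class number is $2$. The two roots are conjugate over $\mathbb{Q}$, and the Galois action on the reductions (the two roots are swapped by an automorphism compatible with reduction at primes above $p$) shows that both roots have the same $m_j$. So the total is shared equally between them.

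Here the factor $\#\Aut(E_j)$ matters. If the reduction of $j$ is $0$ or $1728$ modulo $p$, then $\#\Aut$ is $6$ or $4$ rather than $2$. I must check whether $8000$, or the roots of $P_{20}$ or $P_{35}$, reduce to $0$ or $1728$ for the exceptional primes. I expect the anomalies at $p=13$ for $D=20$ and at $p=61$ for $D=35$ to come from either an extra $t\neq 0$ term or such a coincidence, and I would verify each one explicitly.

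I expect the main obstacle to be the bookkeeping for small primes: enumerating the $t\neq 0$ terms, handling the $\ell\mid D$ rule for $\varepsilon$ (especially $\ell=2$), and treating the non-coprime case $p=5$ outside the theorem.
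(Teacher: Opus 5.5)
Your overall strategy (Theorem \ref{thm:GZ_for_3p} with $D_1=D$, $D_2=3p$, plus direct computation at a few small primes) is the paper's. Your Step~1 is fine and is cleaner than the paper's treatment of the split case: split primes give ordinary reductions, while roots of $P_{3p}\bmod p$ are supersingular.

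The genuine gap is Step~3. Conjugacy of the two roots of $P_{20}$ or $P_{35}$ over $\mathbb{Q}$ does not give equal $m_j$. Multiplicities of roots of $P_{3p}(X)\bmod p\in\mathbb{F}_p[X]$ are only invariant under $\mathrm{Gal}(\overline{\mathbb{F}}_p/\mathbb{F}_p)$. So you need the two \emph{reductions} to be Frobenius-conjugate, i.e.\ $P_D\bmod p$ must be irreducible over $\mathbb{F}_p$. The roots of $P_{20}$ and $P_{35}$ generate $\mathbb{Q}(\sqrt5)$, so this requires $\left(\frac{5}{p}\right)=-1$ and $p\nmid\operatorname{disc}P_D$.
\begin{itemize}
\item For $D=20$ this is exactly what the paper proves, using $\operatorname{disc}P_{20}=2^{18}5^3 13^2 17^2$ and $\left(\frac{5}{p}\right)=\left(\frac{p}{5}\right)=-1$ for $p\equiv 2,3\pmod 5$, $p\notin\{5,13,17\}$.
\item For $D=35$ it fails at $p=41,61,89,101$, where $\left(\frac{5}{p}\right)=1$. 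For instance $P_{35}\equiv X^2+10X+2=(X-3)(X-28)\pmod{41}$. The Gross--Zagier sum gives $\sum_j m_j=2$, but only $3\equiv 54000$ is a root of $P_{123}\bmod 41$. So the values are $2$ and $0$, not $1$ and $1$.
\item The value $4$ at $p=61$ comes neither from an extra $t\neq 0$ term nor from $j\in\{0,1728\}$. Here $P_{35}\equiv (X+20)(X+52)\pmod{61}$, and the shared root is $-52\equiv 8000$, which has multiplicity $4$ in $P_{183}\bmod 61$ by part (1).
\end{itemize}
So for $D=35$ you must determine directly which root is shared; the paper settles all of (3) by exhaustive computation.

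Two further points.
\begin{itemize}
\item At $p=5,13,17$ the two roots of $P_{20}$ coincide modulo $p$ (at $0$, $8000\bmod 13$ and $54000\bmod 17$ respectively), so there is nothing to share out. The stated values $2,4,2$ there, and $2$ for $D=35$, $p=5$, are the multiplicities of the common root in $P_{3p}\bmod p$. The product $m_j$ is twice as large: at $p=13$ the Gross--Zagier sum, including the $x=\pm 26$ terms, equals $8=2\cdot 4$. These primes must be handled by direct factorization.
\item For $D=35$ you say $t^2<105/p$ ``forces $t=0$'' when $p>105$, but you never evaluate that term. Since $105p\equiv 1\pmod 4$, the condition $x^2\equiv D_1D_2\pmod 4$ forces $x$ odd, so there is no $t=0$ term and the sum is empty. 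This is what gives $m_j=0$ for the infinitely many inert primes $p>105$. It contradicts your Step~1 remark that only finitely many primes in the admissible classes give $0$.
\end{itemize}
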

\begin{proof}
(1) First consider the case that $p>5$.
We have $F(m) \ne 0$ with $m=\frac{24p-x^2}{4}$ only if $x=0$
and $p\equiv 5 \pmod 8$. Furthermore $F(6p) = 4$ by $\varepsilon(p)=-1$, $\varepsilon(2)=1$ and $\varepsilon(3)=1$. 
For $j=8000$, Theorem \ref{thm:GZ_for_3p} reads
\[
\frac{1}{2} m_j \# \Aut(E_j) = 4.
\]
Since $\# \Aut(E_j) = 2$,
we have $m_j = 4$ for $p\equiv 5 \bmod 8$ with $p > 5$.

Next consider the case that $p=5$.
We have $F(m)\ne 0$ with $m=\frac{24p-x^2}{4}$ if $x=0$, $\pm 2p$.
Furthermore $F(6p) = 4$ and $F(p) = 1$. Hence
Theorem \ref{thm:GZ_for_3p} reads
\[
\frac{1}{2} m_j \# \Aut(E_j) = 4+1+1.
\]
Since $\# \Aut(E_j) = 6$,
we have $m_j = 2$ for $p=5$.

(2) 
First consider the case of $p>17$.
The right hand side of \eqref{eq:GZ_for_3p} for $D=20$ is $\ord_p F(15p)$,
as the contribution comes only from $x=0$.
If $p \equiv 1,4 \pmod 5$, then $\varepsilon(5)=\left(\frac{-3p}{5}\right)=-1$,
whence $\ord_p F(15p)=0$. Thus we assume $p \equiv 2,3 \pmod 5$.
Note
\[
P_{20}(X)=X^2 - 1264000X - 681472000.
\]
The discriminant of $P_{20}(X)$ is $2^{18} 5^3 13^2 17^2$.
By the law of quadratic reciprocity, $\displaystyle \left(\frac{5}{p}\right) = \left(\frac{p}{5}\right)$, which is $-1$ by $p \equiv 2,3 \pmod 5$. Hence
$P_{20}(X)$ considered as a polynomial over ${\mathbb F}_p$ is irreducible. 
Hence the multiplicity $m_j$ of a root $j$ of $P_{20}(X)$ is independent of the choice of the root. Theorem \ref{thm:GZ_for_3p} reads
\[
\frac{1}{2} \sum_{j \text{ s.t. } P_{20}(j)=0} m_j \# \Aut(E_j) = \ord_p F(15p) = 4,
\]
since $\varepsilon(3)=\varepsilon(5)=1$. By $\# \Aut(E_j)=2$,
we have $m_j = 2$. The number of the remaining primes (i.e., $p\le 17$) are finite. The lemma for the cases can be confirmed through exhaustive calculations.

(3) Put $x=py$. Then $3pD-x^2 = p(105-py^2)>0$. The set of pairs $(p,x)$ is finite. The lemma
can be confirmed through exhaustive calculations.
\end{proof}

\subsection{The degree-$3$ isogeny}
We recall degree-$3$ isogenies, which we use in Section \ref{isogeniesbetweenElaElad}

\begin{prop}[Descent by-$3$ isogeny, J. Top: {\cite[\S 3]{Top}}]
\label{descent_3_isogeny}
    Let k be a field with char $k\neq 2,3$. 
    For an elliptic curve $E:y^2=x^3+a(x-b)^2$ over $k$ and the order-3 subgroup $T:=\{O,(0,b\sqrt{a}),(0,-b\sqrt{a})\}$, the quotient curve $E/T$ is given by the equation
    \[
    {\nu}^2={\xi}^3-27a(\xi-4a-27b)^2
    \]
    and the quotient map \textcolor{black}{is given} by
    \[
    \xi=\frac{3(6y^2+6ab^2-3x^3-2ax^2)}{x^2} \ \text{ and } \ \nu=\frac{27y(-4abx+8ab^2-x^3)}{x^3}.
    \]
    In addition, For an elliptic curve $E:y^2=x^3+d$ over $k$ and the order-3 subgroup $T:=\{O,(0,\sqrt{d}),(0,-\sqrt{d})\}$, the quotient curve $E/T$ is given by the equation
    \[
    {\nu}^2={\xi}^3-27d
    \]
    and the quotient map by
    \[
    \xi=\frac{y^2+3d}{x^2} \ \text{ and } \ \nu=\frac{y(x^3-8d)}{x^3}.
    \]
    
    Repeating this process, in other words, taking the quotient by the new order-3 subgroup 
    $
    \left\{O,(4a+27b)\sqrt{-27a},-(4a+27b)\sqrt{-27a}\right\}$ on the new curve, corresponds to taking the quotient by all 3-torsion on the original curve; this is just a multiplication by $3$. More precisely, if we repeat the process and the $x$-coordinate is divided by $27^2$ and the $y$-coordinate by $27^3$, the map is exactly the same with the multiplication by $3$.  
\end{prop}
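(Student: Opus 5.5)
This is a verification of an explicit rational-map formula: check that the stated maps land on the stated curves, and identify their kernels. Throughout, $\operatorname{char} k \ne 2,3$.

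\textbf{Step 1: the map lands on $E/T$.} For $E:y^2=x^3+a(x-b)^2$, substitute the given $\xi$ and $\nu$ into $\nu^2-\xi^3+27a(\xi-4a-27b)^2$. Then use the curve equation to replace $y^2$ by $x^3+a(x-b)^2$. After this, $\xi$ becomes a rational function of $x$ alone, and $\nu^2$ becomes $y^2$ times a rational function of $x$, hence also a function of $x$ alone. Clearing the common denominator $x^6$, the identity reduces to a polynomial identity in $x$, $a$, $b$, which I would check by expansion.

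\textbf{Step 2: the kernel and degree.}
\begin{itemize}
\item The map is regular away from $x=0$ and sends $O$ to $O$.
\item At the two points $(0,\pm b\sqrt a)$, the numerator of $\xi$ is $3(6y^2+6ab^2)$, which equals $36ab^2\ne0$ there, while the denominator $x^2$ vanishes. So these points also go to $O$.
\item The map on the function field is $x\mapsto \xi$ with $[k(x,y):k(\xi,\nu)]=3$. To see this, note that $\xi$, written as a function of $x$ via $y^2$, has degree $3$ in $x$ (numerator cubic over denominator $x^2$, after substitution).
\end{itemize}
Hence the isogeny has degree $3$, and its kernel is exactly $T$. Since a separable isogeny is determined up to isomorphism of the target by its kernel, the image curve is $E/T$.

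\textbf{Step 3: the case $y^2=x^3+d$.} The same check applies with $\xi=(y^2+3d)/x^2$, where $y^2+3d=x^3+4d$, and $\nu=y(x^3-8d)/x^3$. This identity, $\nu^2=\xi^3-27d$, is the classical one and is a short expansion.

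\textbf{Step 4: iterating the construction.}
\begin{itemize}
\item The new curve $\nu^2=\xi^3-27a(\xi-4a-27b)^2$ has the same shape as $E$ with $a'=-27a$ and $b'=4a+27b$. So the second quotient is by the subgroup $\{O,(0,\pm b'\sqrt{a'})\}$, which is the subgroup stated.
\item The composite $E\to E/T\to (E/T)/T'$ has degree $9$.
\item Its kernel is $E[3]$. It contains $T$, and the dual isogeny of $E\to E/T$ has kernel equal to the image of $E[3]$, which is an order-3 subgroup of $E/T$. Since $T'$ is a kernel of degree $3$ distinct from nothing else forced, I would identify $T'$ with the image of $E[3]$. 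Concretely, I would check that the image under the first map of a 3-torsion point outside $T$ (a flex of $E$) has $\xi$-coordinate $0$.
\item The target curve is $y^2=x^3+729a(x-27b)^2$. Rescaling $x\mapsto x/27^2$ and $y\mapsto y/27^3$ turns this back into $E$.
\item The composite then has kernel $E[3]$ and degree $9$. A degree-$9$ endomorphism of $E$ with kernel $E[3]$ is $[3]$ composed with an automorphism. The automorphism would be pinned down by comparing leading terms of the $x$-coordinate at $O$, or by symbolic composition; the expected outcome is that the automorphism is the identity.
\end{itemize}

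\textbf{Main obstacle.} The difficulty is bookkeeping rather than conceptual: the polynomial identity in Step 1, and the normalization in Step 4 that the composite is exactly $[3]$ and not $-[3]$ or a twist. I would settle the latter by comparing the expansion of $x$ in the local parameter at $O$.
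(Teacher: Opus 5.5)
Steps 1--3 and the kernel identification in Step 4 are sound. They give a self-contained verification, whereas the paper gives no proof: it cites Top and remarks that such formulas come from V\'elu's formulas. Indeed, the first map is V\'elu's normalized $3$-isogeny with kernel $T$, followed by the isomorphism $(X,Y)\mapsto(9X+12a,-27Y)$; using that would give your Step~1 identity for free. Your proposed check that $\phi(E[3])=T'$ works immediately. Since $x^2\xi/3=3x^3+4ax^2-12abx+12ab^2=\psi_3(x)/x$, where $\psi_3$ is the $3$-division polynomial of $E$, the flexes outside $T$ are exactly the points with $\xi=0$.

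Step~4 has two concrete problems.

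\textbf{Arithmetic slip.} With $a'=-27a$ and $b'=4a+27b$ you get $-27a'=729a$ and $4a'+27b'=729b$. So the second quotient is $y^2=x^3+729a(x-729b)^2$, not $y^2=x^3+729a(x-27b)^2$. Only with $729b$ does the rescaling $x\mapsto x/27^2$, $y\mapsto y/27^3$ return $E$.

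\textbf{The sign cannot be fixed your way.} You single out $[3]$ versus $[-3]$ as the main obstacle and propose to settle it by comparing expansions of the $x$-coordinate at $O$. This cannot work. Since $x\circ[-1]=x$, the $x$-expansions of $[3]$ and $[-3]$ are identical. For $\iota\colon(x,y)\mapsto(u^2x+r,u^3y)$, the $x$-expansion only detects $u^2$. You need the $y$-coordinate, the parameter $z=-x/y$, or the invariant differential.

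Here is how that goes:
\begin{itemize}
\item From $\xi=9x+12a+O(x^{-1})$ and $\nu=-27y\,(1+O(x^{-2}))$ you get $-\xi/\nu=-\tfrac13(-x/y)\,(1+O(z))$ near $O$. Hence $\phi^*(d\xi/2\nu)=-\tfrac13\,dx/2y$.
\item The second step contributes another factor $-\tfrac13$.
\item The rescaling multiplies $z$ by $27$.
\item So the composite $\alpha$ satisfies $\alpha^*\omega=27\cdot\tfrac19\,\omega=3\omega$.
\item Writing $\alpha=\iota\circ[3]$, this forces $\iota^*\omega=\omega$. Hence $\iota=\mathrm{id}$, because $\Aut(E)$ acts faithfully on $\omega$ when $\operatorname{char}k\ne 2,3$.
\end{itemize}

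This sign is not cosmetic. The proof of Lemma~\ref{minus-3} uses that the second step followed by the rescaling is exactly the dual of the first step.
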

 Explicit formulas for isogenies as the one above can be obtained more generally from V\'elu's fomula.
\subsection{Superspecial genus-$2$ curves.}
Let $K$ be a field of characteristic $\ne 2$ and $\overline K$ the algebraic closure of $K$. We review some facts on genus-2 curves. Igusa \cite{Igusa} classified curves of genus $2$ into 7 types according to \textcolor{black}{their automorphism groups}.  We focus on one of \textcolor{black}{them}: the genus-$2$ curves \textcolor{black}{of the form} 
\[
C_{\lm}:y^2=x(x-1)(x-\lm)\left(x-\frac{\lm-1}{\lm}\right) \left(x-\frac{1}{1-\lm}\right).
\]
with $\lm\in K$. We note that this curve is nonsingular if and only if $\lm$ satisfies $\lm \ne 0,1$ and $\lm^2-\lm+1\ne 0$.  We recall that Ibukiyama, Katsura and Oort \cite{IKO02} give a condition equivalent to the genus-$2$ curve $C_{\lm}$ being supersupecial.
For each $\lambda \in K$, we choose a square root (in $\overline K$) of $\lambda^2-\lambda+1$, fix it and write it as $\sqrt{\lambda^2-\lambda+1}$.
For the curve $C_{\lm}$, we consider two elliptic curves:
\begin{enumerate}

\item[] $E_{\Lambda^{-}(\lm)}: Y^2=X(X-1)(X-(1-\lambda)(\lambda-\sqrt{{\lm}^2-\lambda+1})^2)$,

\item[] $E_{{\Lambda}^{+}(\lm)}: Y^2=X(X-1)(X-(1-\lambda)(\lambda+\sqrt{{\lm}^2-\lambda+1})^2)$,
\end{enumerate}
where we denote 
$
\Lm^{\pm}(\lm):= (1-\lambda)(\lambda\pm\sqrt{{\lm}^2-\lambda+1})^2$.  We note that $\Lm^{\pm}(\lm)\ne 0,1$ if and only if $\lm$ satisfies $\lm\ne0,1$.  The next proposition enables us to reduce the problem for the superspeciality of $C_\lm$ to that for the supersingularity of $E_{\Lambda^{-}(\lm)}$ and $ E_{{\Lambda}^{+}(\lm)}$.

\begin{prop}[Ibukiyama, Katsura and Oort {\cite[Lemma 1.1 (\romannumeral2),\ Proposition 1.3 (\romannumeral2), (\romannumeral3)]{IKO02}}]
\label{arigataya}
The curve $C_{\lm}$ is superspecial if and only if the two associated elliptic curves $E_{\Lambda^{-}(\lm)}$ and $ E_{{\Lambda}^{+}(\lm)}$ are supersingular.
\end{prop}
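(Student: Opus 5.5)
The statement is Proposition~\ref{arigataya}, taken from Ibukiyama--Katsura--Oort. I would prove it by exhibiting an explicit degree-$2$ (i.e.\ $(2,2)$) isogeny between $\mathrm{Jac}(C_\lm)$ and $E_{\Lambda^{-}(\lm)}\times E_{\Lambda^{+}(\lm)}$. Superspeciality of a genus-$2$ curve is equivalent to its Jacobian being isomorphic to a product of supersingular elliptic curves. In characteristic $p\ne 2$, an abelian surface isogenous to $E_1\times E_2$ with both $E_i$ supersingular is supersingular. The real content is to obtain superspecial (isomorphic to such a product) rather than merely supersingular, and the converse.

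\textbf{Step 1 (involution).} Since ${\rm RA}(C_\lm)\supseteq S_3$, $C_\lm$ admits a non-hyperelliptic involution $\sigma$. Concretely, the order-$2$ Möbius transformation of $\mathbb{P}^1$ that permutes the six branch points $\{0,1,\lm,(\lm-1)/\lm,1/(1-\lm),\infty\}$ in three transposed pairs has fixed points at $x$ with $x^2$-type equation whose discriminant involves $\lm^2-\lm+1$; this is where $\sqrt{\lm^2-\lm+1}$ enters. I would conjugate $\sigma$ to $x\mapsto -x$ by a Möbius change of coordinates. This rewrites $C_\lm$ as $y^2=(x^2-a)(x^2-b)(x^2-c)$, with $a,b,c$ expressed via $\lm\pm\sqrt{\lm^2-\lm+1}$.

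\textbf{Step 2 (quotients).} For a curve $y^2=f(x^2)$, the two quotients by the lifts of $\sigma$ are $E_1:v^2=f(u)$ and $E_2:v^2=u f(u)$, with $u=x^2$ (the latter after the substitution $v=xy$). Normalizing each to Legendre form $Y^2=X(X-1)(X-\Lambda)$, I would identify the Legendre parameters with $\Lambda^{\mp}(\lm)$. The induced map $\mathrm{Jac}(C_\lm)\to E_1\times E_2$ is then an isogeny with kernel $\cong(\Z/2)^2$, which is maximal isotropic in the $2$-torsion.

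\textbf{Step 3 (superspeciality).} Use the criterion that a principally polarized abelian surface $A$ in characteristic $p$ is superspecial iff $a(A)=2$, i.e.\ Frobenius kills $H^0(\Omega^1)$, i.e.\ the Cartier--Manin (Hasse--Witt) matrix vanishes. The isogeny has degree prime to $p$, so it induces an isomorphism on $p$-divisible groups and on Dieudonné modules. Hence $a(\mathrm{Jac}(C_\lm))=a(E_1)+a(E_2)$. This is $2$ iff both $E_i$ have $a=1$, i.e.\ are supersingular.

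Alternatively, and more computationally, I would check that the Cartier--Manin matrix of $C_\lm$ in the basis $dx/y,\ x\,dx/y$, adapted to the $\sigma$-eigenspaces, is diagonal. Its entries are the Hasse invariants of $E_{\Lambda^{\pm}(\lm)}$, up to units, namely the coefficients of $x^{p-1}$ in $(x(x-1)(x-\Lambda^{\pm}))^{(p-1)/2}$.

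The main obstacle is Step~1--2: the explicit bookkeeping showing that the Legendre parameters of the two quotient curves are exactly $(1-\lm)(\lm\pm\sqrt{\lm^2-\lm+1})^2$. I would first locate the fixed points of the involution permuting the branch points and then push the branch points through the normalizing Möbius map.
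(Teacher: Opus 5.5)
Your outline is correct and is essentially the argument behind this statement. The paper gives no argument of its own beyond citing Ibukiyama--Katsura--Oort, and your Steps 1--3 are the argument underlying those cited results: the extra involution of $C_\lambda$, the two elliptic quotients identified with $E_{\Lambda^{\pm}(\lambda)}$, and the degree-$4$ (hence prime-to-$p$) isogeny $\mathrm{Jac}(C_\lambda)\to E_{\Lambda^-(\lambda)}\times E_{\Lambda^+(\lambda)}$ combined with the $a$-number/Cartier criterion for superspeciality.

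Two wording fixes. First, it is the Cartier operator on $H^0(C,\Omega^1)$ (equivalently Frobenius on $H^1(C,\mathcal{O}_C)$) that must vanish, not Frobenius on $H^0(\Omega^1)$. Second, the Legendre parameters of the quotients equal $\Lambda^{\pm}(\lambda)$ only after a suitable ordering of the branch points; this is harmless, since only the $j$-invariants matter.
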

\subsection{Analytic results}
We review an estimation of a sum of the Legendre symbols weighted by the von Mangoldt function $\Lambda(n)$.
Let 
\[
S(D,X):=\sideset{}{^*} \sum_{|d| \le D} \left| \sum_{3 \le n \le X} \Lambda(n) \left( \frac{d}{n}\right) \right|,
\]
where the star on the summation indicates that $d$ is not  square. Then for every $C>0$ and $3 \le D \le X^{\frac{49}{50}}$, Jutila \cite[Lemma 8]{Ju} showed 
\[
S(D,X) \ll XD(\log X)^{-C}.
\]
In addition, Fouvy and Murty \cite[Lemma 6]{FM01} showed \textcolor{black}{that} this estimation remains valid if the variable of summation $n$ satisfies $n\equiv3\bmod 4$.
We further provide a similar estimation in the case $n \equiv11\bmod 12$.
\begin{lem}
\label{jutila}
For $3 \le 3D \le X^{\frac{49}{50}}$, we have
\[
\sideset{}{^{**}}\sum_{0\le d \le D} \left| \ \sideset{}{''} \sum_{3 \le n \le X} \Lambda(n) \left( \frac{d}{n}\right) \right| \ll  3XD(\log X)^{-C},
\]
where the double star on the summation indicates that we sum over integers $d$ such that $d$ and $d/3$ are not square and the double prime indicates that $n \equiv 11 \bmod 12$. 
\end{lem}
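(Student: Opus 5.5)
The plan is to reduce Lemma \ref{jutila} to Jutila's estimate (and to its congruence-restricted version of Fouvry and Murty) by isolating the condition $n\equiv 11 \bmod 12$ with characters.

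\textbf{Step 1: detect the congruence with characters.} The condition $n\equiv 11\bmod 12$ is equivalent to $n\equiv 3\bmod 4$ together with $n\equiv 2\bmod 3$. For odd $n$ coprime to $3$ we write its indicator as
\[
\frac{1}{4}\left(1-\chi_{-4}(n)\right)\left(1-\chi_{-3}(n)\right),
\]
where $\chi_{-4}(n)=\left(\frac{-4}{n}\right)$ and $\chi_{-3}(n)=\left(\frac{-3}{n}\right)=\left(\frac{n}{3}\right)$. The terms with $n$ even or $3\mid n$ are prime powers of $2$ or $3$. They contribute only $O(\log^2 X)$ per $d$, hence $O(D\log^2X)$ in total, which is negligible.

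\textbf{Step 2: expand and merge the characters.} By multiplicativity, for odd $n$ coprime to $3$ we have $\left(\frac{d}{n}\right)\chi_{-4}(n)=\left(\frac{-d}{n}\right)$ up to the factor $\left(\frac{4}{n}\right)=1$. Likewise $\left(\frac{d}{n}\right)\chi_{-3}(n)=\left(\frac{-3d}{n}\right)$ and $\left(\frac{d}{n}\right)\chi_{-4}\chi_{-3}(n)=\left(\frac{3d}{n}\right)$. The inner sum therefore becomes $\tfrac14$ times a signed combination of four unrestricted sums
\[
\sum_{3\le n\le X}\Lambda(n)\left(\frac{\delta}{n}\right),\qquad \delta\in\{d,\,-d,\,-3d,\,3d\},
\]
up to the error already discussed. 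Here I take the symbol to be the Jacobi/Kronecker symbol, as in Jutila's setting.

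\textbf{Step 3: apply Jutila's bound.} By the triangle inequality, the left-hand side is at most the sum of four expressions of the form $\sum_{d}\left|\sum_n \Lambda(n)\left(\frac{\delta}{n}\right)\right|$. As $d$ ranges over $0\le d\le D$, the value $\delta$ ranges over integers with $|\delta|\le 3D$, each $\delta$ arising from at most one $d$.

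The double-star condition is exactly what keeps $\delta$ away from squares:
\begin{itemize}
\item $d$ and $-d$ are non-squares since $d$ is not a square and $d\ge0$ (for $d>0$, $-d<0$ is never a square; $d=0$ is excluded as $0$ is a square);
\item $3d$ is not a square, since otherwise $d/3=3d/9$ would be a square;
\item $-3d$ is negative, hence not a square.
\end{itemize}
Each of the four sums is therefore a subsum of $S(3D,X)$. Since $3\le 3D\le X^{49/50}$, Jutila's lemma gives $S(3D,X)\ll X\cdot 3D(\log X)^{-C}$. This yields the bound $\ll 3XD(\log X)^{-C}$.

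\textbf{Main obstacle.} The only delicate point is the bookkeeping of the characters: one must check that the products of Legendre/Jacobi symbols identify correctly, in particular that $\left(\frac{-3}{n}\right)$ agrees with $\left(\frac{n}{3}\right)$ for odd $n$ by reciprocity. One must also verify that the non-square hypotheses on $d$ and $d/3$ are exactly what rule out square $\delta$, including the sign cases. The rest is routine.
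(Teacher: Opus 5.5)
Your proof is correct. It rests on the same idea as the paper's: detect the congruence $n\equiv 11 \bmod 12$ with quadratic characters, merge them into the twist $\left(\frac{d}{n}\right)$, and bound the resulting sums by a Jutila-type estimate. The difference is where you stop the reduction.

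The paper inserts only the single factor $\frac{1}{2}\left(1+\left(\frac{3}{n}\right)\right)$. For $n\equiv 3\bmod 4$ this equals $1$ exactly when $n\equiv 2\bmod 3$, because then $\left(\frac{3}{n}\right)=-\left(\frac{n}{3}\right)$. The paper then quotes the Fouvry--Murty version of Jutila's bound restricted to $n\equiv 3\bmod 4$. Only the twists $d$ and $3d$ appear, and the two hypotheses ($d$ and $d/3$ not squares) are precisely the non-square conditions these two twists need.

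You also detect $n\equiv 3\bmod 4$ yourself via $\chi_{-4}$. This produces the four twists $\pm d,\pm 3d$ and lets you apply Jutila's unrestricted lemma directly with $3D$ in place of $D$, which in effect reproves the Fouvry--Murty refinement inline. Your route is slightly longer but depends only on Jutila. It also makes explicit two points the paper leaves implicit: the negative twists are automatically non-squares, and the prime powers of $2$ and $3$ contribute only a negligible $O(D\log X)$.
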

\begin{proof}
   We can detect odd primes $n \equiv 11 \bmod 12$ by the function $\frac{1}{2} \left(1+(\frac{3}{n})\right)$, and then apply the estimation in the case $n \equiv3 \bmod 4$. 
%
    \end{proof}

\section{Isogenies between $E_{\Lm^{-}(\lm)}$ and $E_{\Lm^{+}(\lm)}$}
\label{isogeniesbetweenElaElad}
Let $K$ be a field with char$K\ne 2,3$. 
\textcolor{black}{Let $\lambda\in K$
with $\lm^2-\lm+1\ne 0$, $\lm\ne 0$ and $\lm \ne 1$.}
In this section, we \textcolor{black}{construct explicit isogenies} between the elliptic curves  
\[
E_{\Lambda^{-}(\lm)}:y^2=x(x-1)\left(x-(1-\lm)(\lm-\sqrt{{\lm}^2-\lm+1} )^2 \right)
\]
and
\[
E_{{\Lambda^{+}(\lm)}}:y^2=x(x-1)\left(x-(1-\lm)(\lm+\sqrt{{\lm}^2-\lm+1})^2 \right).
\]
\textcolor{black}{These isogenies will play an important role, when we determine the endomorphism rings of $E_{\Lambda^{+}(\lm)}$ and $E_{\Lambda^{-}(\lm)}$ in the next section.} 

\textcolor{black}{We} firstly observe $3$-torsion points of the elliptic curves.
\begin{lem}
\label{x-three}
  For $\varepsilon=\pm1$, let $E_{\Lambda^{\varepsilon}(\lm)}$ be an elliptic curve with $\lm \in K$. Then
$(\lm+1+2\varepsilon\sqrt{{\lm}^2-\lm+1})/3$
   is the $x$-coordinate of some $3$-torsion points of $E_{\Lambda^{\varepsilon}(\lm)}$.
\end{lem}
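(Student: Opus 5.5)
We use the 3-division polynomial. For $E: y^2=f(x)=x^3+a_2x^2+a_4x+a_6$, a point $(x_0,y_0)$ with $y_0\ne 0$ is $3$-torsion iff $\psi_3(x_0)=0$, where
\[
\psi_3(x)=3x^4+4a_2x^3+6a_4x^2+12a_6x+4a_2a_6-a_4^2 .
\]
Equivalently, $2f(x)f''(x)=f'(x)^2$. Any root $x_0$ of $\psi_3$ then gives two $3$-torsion points $(x_0,\pm\sqrt{f(x_0)})$ over $\overline K$. These are nonzero because $\psi_3$ and $f$ have no common root when $E$ is nonsingular. So it suffices to check $\psi_3(x_0)=0$ for $x_0=(\lm+1+2\varepsilon s)/3$, where $s=\sqrt{\lm^2-\lm+1}$.

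Write $\Lm=\Lm^{\varepsilon}(\lm)=(1-\lm)(\lm+\varepsilon s)^2$, so that $f(x)=x(x-1)(x-\Lm)$. Then $a_2=-(1+\Lm)$, $a_4=\Lm$, $a_6=0$, and
\[
\psi_3(x)=3x^4-4(1+\Lm)x^3+6\Lm x^2-\Lm^2 .
\]
Everything lives in $K(s)$, where the only relation is $s^2=\lm^2-\lm+1$. First expand
\[
\Lm=(1-\lm)(\lm^2+2\varepsilon\lm s+s^2)=(1-\lm)\bigl(2\lm^2-\lm+1+2\varepsilon\lm s\bigr),
\]
which has the form $A+\varepsilon Bs$ with $A,B\in K[\lm]$. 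Likewise $x_0=(\lm+1)/3+\tfrac{2}{3}\varepsilon s$. Substituting into $81\,\psi_3(x_0)$ clears the denominators, and reducing $s^2$ gives $P(\lm)+\varepsilon s\,Q(\lm)$. The claim is that $P=Q=0$ identically.

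To make the check manageable, we use $\varepsilon^2=1$, so only the parity of the power of $s$ matters. The identity $P=Q=0$ is then a polynomial identity in $\lm$ of bounded degree (around $8$). It can be verified by direct expansion. Alternatively, verify it at enough numerical values of $\lm$, say $\lm=0,2,-1,\dots$, with $s$ chosen accordingly, over $\bQ(s)$.

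A cleaner route, which we try first, is to shift coordinates. Put $x=x_0+t$ and show that $f(x_0+t)$ has the shape $\alpha(t-\beta)^2(\ldots)$ compatible with the form $y^2=x^3+a(x-b)^2$ of Proposition \ref{descent_3_isogeny}. There the $3$-torsion points sit at $x=0$, and the relation $2ff''=f'^2$ at $x_0$ becomes transparent.

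The main obstacle is purely the algebraic bookkeeping in the radical $s$. We handle it by working in $K[\lm][s]/(s^2-\lm^2+\lm-1)$ and separating the rational and $s$-parts. The hypotheses $\mathrm{char}\,K\ne 2,3$, $\lm\ne0,1$ and $\lm^2-\lm+1\ne0$ guarantee that $E_{\Lm^\varepsilon(\lm)}$ is nonsingular, that $x_0$ makes sense, and that $x_0$ is not a $2$-torsion abscissa. Hence the points over $x_0$ have exact order $3$.
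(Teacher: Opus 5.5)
Your proposal is correct and follows the paper's argument: write $\psi_3(x)=3x^4-4(1+\Lambda)x^3+6\Lambda x^2-\Lambda^2$ for the Legendre form, then check by direct expansion in $K[\lambda][s]/(s^2-\lambda^2+\lambda+1-2+1)$, i.e.\ modulo $s^2=\lambda^2-\lambda+1$, that $x_0=(\lambda+1+2\varepsilon s)/3$ is a root (the paper hands this expansion to a computer check). One small correction to your side remark: the shifted shape you want is $f(x_0+t)=t^3+\alpha(t-\beta)^2$, which is exactly the paper's Lemma \ref{change_form3}, not $\alpha(t-\beta)^2(\ldots)$.
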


\begin{proof}
    The division polynomial ${\psi}_3(x)$ of the elliptic curve $E_{\Lambda^{\varepsilon}(\lm)}$ is
    \[
    3{x}^4-4(1+\Lambda^{\varepsilon}(\lm)){x}^3+6\Lambda^{\varepsilon}(\lm) {x}^2 -{\Lambda^{\varepsilon}(\lm)}^2.
    \]
    By a direct calculation, we find that $(\lm+1+2\varepsilon\sqrt{{\lm}^2-\lm+1})/3$ is \textcolor{black}{a} root of ${\psi}_3(x)$ (see \cite[1]{Github}).
\end{proof}
We denote by $P^{\varepsilon}$ one of the $3$-torsion points of $E_{\Lambda^{\varepsilon}(\lm)}$ whose $x$-coordinate is $(\lm+1+2\varepsilon\sqrt{{\lm}^2-\lm+1})/3$. We now consider the quotient elliptic curves $E_{\Lm^{\varepsilon}(\lm)}/\{O,P^{\varepsilon},2P^{\varepsilon}\}$. In order to derive equations of them, we make a change of variables. 
\begin{lem}
\label{change_form3}
   For $\varepsilon=\pm1$, the elliptic curve $E_{\Lm^{\varepsilon}(\lm)}$ is isomorphic to an elliptic curve defined by $
    {Y_1}^2={X_1}^3+A^{\varepsilon}(\lm)\left(X_1-B^{\varepsilon}(\lm) \right)^2
    $ where 
   \begin{eqnarray*}
       && A^{\varepsilon}(\lm):=\left({\lm}^2-\lm+1\right)\left(2{\lm}-1+2\varepsilon\sqrt{{\lm}^2-\lm+1} \right),\\
       && B^{\varepsilon}(\lm):=-\frac{2({\lm}^2-\lm+1)(2\lm-1) +\varepsilon(5{\lm}^2- 5\lm  + 2 )\sqrt{{\lm}^2-\lm+1}}{9({\lm}^2-\lm+1)}.
   \end{eqnarray*}
   In addition, it is also isomorphic to an elliptic curve defined by
    \[
    {Y_2}^2={X_2}^3+\left(X_2+\frac{2}{27}-\varepsilon\frac{(\lm+1)(\lm-2)(2\lm-1)\sqrt{{\lm}^2-\lm+1}}{27({\lm}^2-\lm+1)^2} \right)^2.
    \]
\end{lem}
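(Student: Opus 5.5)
Write $s=\sqrt{\lambda^2-\lambda+1}$ and $\Lambda=\Lambda^{\varepsilon}(\lambda)$, and let $x_0=(\lambda+1+2\varepsilon s)/3$ be the $x$-coordinate of $P^{\varepsilon}$ from Lemma \ref{x-three}. The plan is to move $P^{\varepsilon}$ to a point with $X_1=0$ by an affine substitution. In the form $Y_1^2=X_1^3+A(X_1-B)^2$, the points $(0,\pm B\sqrt{A})$ are exactly the order-$3$ points used in Proposition \ref{descent_3_isogeny}. At such a point the tangent line is a flex line: the cubic $X_1^3+A(X_1-B)^2-(\text{tangent})^2$ has a triple root at $X_1=0$.

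\textbf{Step 1 (the first model).} Substitute $x=X_1/u^2+x_0$ and $y=Y_1/u^3$ into $y^2=x(x-1)(x-\Lambda)$. The $x^2$-coefficient of the cubic $f(x)=x(x-1)(x-\Lambda)$ is $-(1+\Lambda)$. Expanding $f$ around $x_0$ therefore gives
\[
f(x_0+t)=t^3+\bigl(3x_0-(1+\Lambda)\bigr)t^2+f'(x_0)\,t+f(x_0).
\]
We need this to have the shape $t^3+A(t-B)^2$ after scaling $t=X_1/u^2$. Because $x_0$ is a root of the division polynomial $\psi_3$, the point is a flex. That forces the compatibility relation $f'(x_0)^2=4\bigl(3x_0-1-\Lambda\bigr)f(x_0)$, which is exactly the condition that the quadratic part be a perfect square. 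Then $A=u^2\bigl(3x_0-1-\Lambda\bigr)$ and $B=-u^2f'(x_0)/\bigl(2(3x_0-1-\Lambda)\bigr)$. The scaling $u$ should be chosen so that $A$ becomes $(\lambda^2-\lambda+1)(2\lambda-1+2\varepsilon s)$; a choice like $u^2=3$ or $u^2=9$ looks natural given the denominator $9$ in $B$.

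\textbf{Step 2 (the second model).} From $Y_1^2=X_1^3+A(X_1-B)^2$, scale by $X_1=AX_2$ and $Y_1=A^{3/2}Y_2$. This gives
\[
Y_2^2=X_2^3+(X_2-B/A)^2.
\]
The constant $-B/A$ should then simplify to the stated expression
\[
\frac{2}{27}-\varepsilon\frac{(\lambda+1)(\lambda-2)(2\lambda-1)s}{27(\lambda^2-\lambda+1)^2}.
\]
To simplify, rationalize $1/(2\lambda-1+2\varepsilon s)$ by multiplying through by $2\lambda-1-2\varepsilon s$. Note that $(2\lambda-1)^2-4s^2=-3$. This scaling is only defined over $K(\sqrt{A})$, which is acceptable because the isomorphism is asserted over the algebraic closure. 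Alternatively, one can simply compare $j$-invariants.

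\textbf{Main obstacle.} The hard part is the algebra in the ring $K[s]/(s^2-\lambda^2+\lambda-1)$. One has to reduce $\Lambda=(1-\lambda)(\lambda+\varepsilon s)^2$ and $x_0$ to the form $a+bs$ with $a,b\in K(\lambda)$, verify the perfect-square (flex) identity, and simplify $A$, $B$ and $B/A$. I would carry this out by systematically reducing $s^2$ and, as in Lemma \ref{x-three}, check the resulting identities by computer algebra (cf.\ \cite{Github}). A good sanity check is that the two models have the same $j$-invariant as $E_{\Lambda^{\varepsilon}(\lambda)}$, namely $256(\Lambda^2-\Lambda+1)^3/(\Lambda^2(\Lambda-1)^2)$.
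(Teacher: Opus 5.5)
Your proposal is correct and is essentially the paper's proof: translate $x=X_1+x_0$ to obtain the first model, then rescale $X_1=AX_2$, $Y_1=A^{3/2}Y_2$ and simplify $-B/A$ using $(2\lambda-1)^2-4(\lambda^2-\lambda+1)=-3$. The paper verifies these identities by computer algebra, while your flex remark (from $\psi_3(x_0)=0$) has the advantage of making the constant term $AB^2$ automatic once the $t^2$ and $t$ coefficients are checked. The one correction is that no scaling is needed in Step 1: $3x_0-1-\Lambda^{\varepsilon}(\lambda)$ already equals $A^{\varepsilon}(\lambda)$, so $u=1$ (the guesses $u^2=3$ or $9$ would give the wrong $A$), and $B=-f'(x_0)/(2A)$ then agrees with $B^{\varepsilon}(\lambda)$.
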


\begin{proof}
    Substituting $x=X_1+(\lm+1+2\varepsilon\sqrt{{\lm}^2-\lm+1})/3$ for $E_{\Lm^{\varepsilon}(\lm)}$ yields the equation of the first elliptic curve (see \cite[2-1,2-2]{Github}).  In addition, we find that $A^{\varepsilon}(\lm) \ne 0$. 
    Substituting $X_1=A^{\varepsilon}(\lm)X_2$ \textcolor{black}{and} $Y_1:={A^{\varepsilon}(\lm)}^{3/2}Y_2$ yields the equation of the second elliptic curve (see \cite[3-1,3-2]{Github}).
\end{proof}

    
Now we show that there exists \textcolor{black}{a} degree-$3$ \textcolor{black}{isogeny} between $E_{\Lambda^{-}(\lm)}$ and $E_{{\Lambda^{+}(\lm)}}$. 
\begin{prop}
For $\varepsilon=\pm1$, let $T^{\varepsilon}$ be the order-3 subgroup $\{P^{\varepsilon},2P^{\varepsilon},O\}$ of $E_{\Lambda^{\varepsilon}(\lm)}$. Then the quotient curve $E_{\Lambda^{\varepsilon}(\lm)}/T^{\varepsilon}$ is isomorphic to $E_{{\Lambda^{-\varepsilon}(\lm)}}$
\end{prop}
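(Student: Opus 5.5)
We will use the second model in Lemma \ref{change_form3} together with Proposition \ref{descent_3_isogeny}. By Lemma \ref{change_form3}, $E_{\Lm^{\varepsilon}(\lm)}$ is isomorphic to
\[
Y_2^2 = X_2^3 + a\,(X_2 - b)^2, \qquad a=1,\quad b = b^{\varepsilon}:= -\frac{2}{27}+\varepsilon\,\frac{(\lm+1)(\lm-2)(2\lm-1)\sqrt{\lm^2-\lm+1}}{27(\lm^2-\lm+1)^2}.
\]
The translation $x = X_1 + x(P^{\varepsilon})$ sends $P^{\varepsilon}$ to a point with $X_1=0$, and the scaling $X_1 = A^{\varepsilon}(\lm) X_2$ keeps it at $X_2 = 0$. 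So the image of $T^{\varepsilon}$ is exactly the subgroup $\{O,(0,\pm b)\}$ that appears in Proposition \ref{descent_3_isogeny}.

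Proposition \ref{descent_3_isogeny} then gives
\[
E_{\Lm^{\varepsilon}(\lm)}/T^{\varepsilon} \cong \bigl\{\nu^2 = \xi^3 - 27(\xi - 4 - 27 b^{\varepsilon})^2\bigr\}.
\]
Substituting $\xi = -27\,\xi'$ and $\nu = (-27)^{3/2}\nu'$, which is allowed over $\overline K$ since we only need an isomorphism over the algebraic closure, turns this into
\[
\nu'^2 = \xi'^3 + \Bigl(\xi' + \tfrac{4}{27} + b^{\varepsilon}\Bigr)^2 .
\]
The key identity is
\[
\tfrac{4}{27} + b^{\varepsilon} = \tfrac{2}{27} + \varepsilon\,\frac{(\lm+1)(\lm-2)(2\lm-1)\sqrt{\lm^2-\lm+1}}{27(\lm^2-\lm+1)^2}.
\]
This is precisely $-b^{-\varepsilon}$, so the quotient curve is $\nu'^2 = \xi'^3 + (\xi' - b^{-\varepsilon})^2$. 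By the second part of Lemma \ref{change_form3} applied with $-\varepsilon$, this is the model of $E_{\Lm^{-\varepsilon}(\lm)}$, which proves the claim.

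The main obstacle is bookkeeping rather than theory. First, we must confirm that $P^{\varepsilon}$ really maps to $X_2=0$ under the two coordinate changes, so that the subgroup in Proposition \ref{descent_3_isogeny} is the image of $T^{\varepsilon}$ and not another order-$3$ subgroup. Second, we must get the signs right in the rescaling by $-27$. We also need the hypotheses $A^{\varepsilon}(\lm)\neq0$, $\lm^2-\lm+1\ne0$ and $\mathrm{char}\,K\neq 2,3$, all of which hold under the section's standing assumptions.

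If the constant-term matching fails by a sign, the fallback is to compare $j$-invariants instead. We would compute $j$ of $\nu^2=\xi^3-27(\xi-4-27b^{\varepsilon})^2$ as a function of $b^{\varepsilon}$ and check by direct calculation that it equals $j(E_{\Lm^{-\varepsilon}(\lm)})$. Over $\overline K$, equal $j$-invariants give isomorphic curves.
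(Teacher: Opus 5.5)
Your argument is correct. It shares the paper's skeleton: both apply Top's formula (Proposition \ref{descent_3_isogeny}) to the flex-normalized model of Lemma \ref{change_form3}, where $T^{\varepsilon}$ becomes the subgroup of points with vanishing $x$-coordinate.

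The difference is the closing step. The paper uses the first model $Y^2=X^3+A^{\varepsilon}(\lambda)\bigl(X-B^{\varepsilon}(\lambda)\bigr)^2$. It then identifies the quotient with $E_{\Lambda^{-\varepsilon}(\lambda)}$ by a machine comparison of $j$-invariants, which is exactly your fallback. You instead normalize to $a=1$, rescale by $-27$, and reduce everything to the identity $\tfrac{4}{27}+b^{\varepsilon}=-b^{-\varepsilon}$. The quotient is then literally the second model for $-\varepsilon$. The rescaling and the identity both check out. Moreover $b^{\varepsilon}=B^{\varepsilon}(\lambda)/A^{\varepsilon}(\lambda)$ agrees with the first model, so you are not relying on a misprint.

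What your route buys is an explicit, hand-checkable isomorphism rather than mere existence over $\overline{K}$. Undoing your scalings, the first-model quotient is carried to $E^{-\varepsilon}$ by $(u,z)\mapsto(r^2u,r^3z)$ with $r^2=-A^{-\varepsilon}(\lambda)/\bigl(27A^{\varepsilon}(\lambda)\bigr)=r_{-\varepsilon}^2$; here one uses $(2\lambda-1)^2-4(\lambda^2-\lambda+1)=-3$. This is precisely the map $j_{(\varepsilon)}$ of Remark \ref{isogenybetE}. It also yields the relations $-27A^{-}(\lambda)=A^{+}(\lambda)/r_+^2$ and $4A^{-}(\lambda)+27B^{-}(\lambda)=B^{+}(\lambda)/r_+^2$ that are invoked without proof in Lemma \ref{minus-3}. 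The paper's $j$-invariant check is quicker to state but produces no map, so those later relations must be obtained separately.
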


\begin{proof}
    From Proposition \ref{descent_3_isogeny} and Lemma \ref{change_form3}, the quotient curve $E_{\Lambda^{\varepsilon}(\lm)}/T^{\varepsilon}$ is given by the equation
    \[
    Y^2=X^3-27A^{\varepsilon}(\lm)\left(X-4A^{\varepsilon}(\lm)-27B^{\varepsilon}(\lm)\right)^2.
    \]
    Computing the $j$-invariants of $E_{\Lambda^{\varepsilon}(\lm)}/T^{\varepsilon}$ and $E_{{\Lambda^{-\varepsilon}(\lm)}}$ implies that they are isomorphic. (see \cite[4-1,4-2]{Github}) 
\end{proof}


\begin{rem}
\label{isogenybetE}
From the above proposition, we obtain an isogeny of degree-$3$ from $E_{\Lambda^{\varepsilon}(\lm)}$ to $E_{{\Lambda^{-\varepsilon}(\lm)}}$. We denote this isogeny by $\psi^{\varepsilon}$.  They are obtained by the composition of the following maps:
\footnotesize{
\begin{eqnarray*}
   i_{(\varepsilon)}: E_{\Lambda^{\varepsilon}(\lm)} &\overset{\cong}{\rightarrow}& E^{\varepsilon}:{Y}^2={X}^3+A^{\varepsilon}(\lm)\left(X-B^{\varepsilon}(\lm) \right)^2 \ ;\\ &&  X=x-\frac{\lm+1+2\varepsilon\sqrt{{\lm}^2-\lm+1}}{3},\ \ Y=y\\
   \pi_{(\varepsilon)}: E^{\varepsilon}&\rightarrow& E^{\varepsilon}/i_{(\varepsilon)}({T}^{\varepsilon}): z^2=u^3-27A^{\varepsilon}(\lm)\left( u-4A^{\varepsilon}(\lm)-27B^{\varepsilon}(\lm)\right)^2 \ ; \\ && u=\frac{3(6Y^2+6A^{\varepsilon}(\lm)B^{\varepsilon}(\lm)^2-3X^3-2A^{\varepsilon}(\lm)X^2)}{X^2},\\ && z=\frac{27Y\left(8A^{\varepsilon}(\lm)B^{\varepsilon}(\lm)^2-X^3-4A^{\varepsilon}(\lm)B^{\varepsilon}(\lm)X\right)}{X^3}\\
    j_{(\varepsilon)}:E^{\varepsilon}/i_{(\varepsilon)}({T}^{\varepsilon})&\overset{\cong}{\rightarrow}& E^{-\varepsilon}:{w}^2={v}^3+A^{-\varepsilon}(\lm)\left(v-B^{-\varepsilon}(\lm) \right)^2; \\
    && v={r_{-\varepsilon}}^2u, w={r_{-\varepsilon}}^3z, \text{ where } r_{-\varepsilon}:=\frac{2\lm-2\varepsilon\sqrt{{\lm}^2-\lm+1}-1}{9} \\
    {i_{(-\varepsilon)}}^{-1}:E^{-\varepsilon}&\overset{\cong}{\rightarrow}& E_{{\Lambda^{-\varepsilon}(\lm)}}:t^2=s(s-1)\left(s-(1-\lm)\left(\lm-\varepsilon\sqrt{{\lm}^2-\lm+1}\right)^2 \right);\\
    && s=v+\frac{\lm+1-2\varepsilon \sqrt{{\lm}^2-\lm+1}}{3},t=w
\end{eqnarray*}
}

\end{rem}
According to Proposition \ref{descent_3_isogeny}, the kernel of composition $\psi^{-} \circ {\psi}^{+} $ (resp. ${\psi}^{+} \circ \psi^{-} $ ) consists of all $3$-torsion points. More precisely, the next lemma shows that $\psi^{-} \circ {\psi}^{+} $ is equal to $[-3]$ by a direct calculation. 

\begin{lem}
\label{minus-3}
    For the isogenies $\psi^{-}$ and ${\psi}^{+}$ that are defined above, the composition $\psi^{-} \circ {\psi}^{+} $ (resp. ${\psi}^{+} \circ \psi^{-} $ ) exactly corresponds to a multiplication-by-minus-3 map $[-3]$.
\end{lem}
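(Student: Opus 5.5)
The plan is to reduce the claim to comparing $\psi^{-}\circ\psi^{+}$ with $[3]$ on a single explicit model, and then to fix the sign. By Proposition \ref{descent_3_isogeny}, applying the descent twice on $E^{\varepsilon}:Y^2=X^3+A^{\varepsilon}(\lm)(X-B^{\varepsilon}(\lm))^2$ gives multiplication by $3$ up to the scaling $(x,y)\mapsto(x/27^2,\,y/27^3)$. This holds provided the second descent is taken with respect to the subgroup $\{O,(0,\pm(4a+27b)\sqrt{-27a})\}$ on the quotient curve. So it will be enough to check two things: that the isogeny $\psi^{-\varepsilon}$ really is a second descent of this kind, and that the isomorphisms in Remark \ref{isogenybetE} compose to the required scaling up to a sign.

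\textbf{Step 1 (kernel).} I will show that $j_{(\varepsilon)}$ sends the distinguished order-$3$ subgroup of $E^{\varepsilon}/i_{(\varepsilon)}(T^{\varepsilon})$, namely the points with $u=0$, to $i_{(-\varepsilon)}(T^{-\varepsilon})$. Since $v=r_{-\varepsilon}^2u$, the condition $u=0$ becomes $v=0$. Under $i_{(-\varepsilon)}^{-1}$ this corresponds to $s=(\lm+1-2\varepsilon\sqrt{\lm^2-\lm+1})/3$, which is exactly the $x$-coordinate of $P^{-\varepsilon}$ from Lemma \ref{x-three}. Hence $\ker(\psi^{-\varepsilon}\circ\psi^{\varepsilon})=E_{\Lm^{\varepsilon}(\lm)}[3]$. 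It follows that $\psi^{-\varepsilon}\circ\psi^{\varepsilon}=\alpha\circ[3]$ for some automorphism $\alpha$ of $E_{\Lm^{\varepsilon}(\lm)}$.

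\textbf{Step 2 (identify the automorphism).} I will transport the composite to the model $E^{\varepsilon}$. There it equals $j_{(-\varepsilon)}\circ\pi_{(-\varepsilon)}\circ\iota\circ j_{(\varepsilon)}\circ\pi_{(\varepsilon)}$, where $\iota=i_{(-\varepsilon)}\circ i_{(-\varepsilon)}^{-1}$ is the identity and so drops out. Here $\pi_{(-\varepsilon)}$ is the descent formula applied to the curve $E^{-\varepsilon}$, and that curve is the rescaled quotient via $v=r_{-\varepsilon}^2u$, $w=r_{-\varepsilon}^3z$. Because the descent formulas are weighted-homogeneous under the scalings $(X,Y,A,B)\mapsto(t^2X,t^3Y,t^2A,t^2B)$, the composite becomes the double-descent map of Proposition \ref{descent_3_isogeny} followed by a scaling $(X,Y)\mapsto(c^2X,c^3Y)$. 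The constant $c$ is built from $r_{-\varepsilon}$, $r_{\varepsilon}$ and $27$.

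\textbf{Step 3 (the sign).} It remains to prove $c^2=1/27^2$ and $c^3=-1/27^3$, that is, $c=-1/27$. This reduces to the identity $r_{\varepsilon}r_{-\varepsilon}$ times the weight factors $=-1/27$. Using $r_{\varepsilon}r_{-\varepsilon}=\frac{(2\lm-1)^2-4(\lm^2-\lm+1)}{81}=-\frac{3}{81}=-\frac{1}{27}$, together with the factor-$27$ bookkeeping from the second descent (the relevant curve carries $-27A$ in place of $A$), the $x$-scaling will come out as exactly $1$ and the $y$-scaling as $-1$ relative to $[3]$. This gives $[-3]$; the statement with the roles of $\pm$ swapped is symmetric in $\varepsilon$.

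The main obstacle is Step 3. Tracking the exact constant, and in particular the branch of $(A^{\varepsilon})^{3/2}$ and the sign of the $y$-scaling, is delicate. If the homogeneity bookkeeping becomes unwieldy, I will fall back on a direct symbolic verification in the style of \cite{Github}. That check composes the rational maps of Remark \ref{isogenybetE} and compares the result with the standard multiplication-by-$3$ formulas $x([3]P)=x-\psi_2\psi_4/\psi_3^2$ (up to the $y$-sign) on $E_{\Lm^{\varepsilon}(\lm)}$. It is carried out over the field $\bQ(\lm,\sqrt{\lm^2-\lm+1})$, and the identity then specializes to any $K$ with $\mathrm{char}\,K\ne2,3$.
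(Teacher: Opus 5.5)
Your proposal is correct and is essentially the paper's argument. The paper phrases it as the dual-isogeny identity $-\psi^{+}=\widehat{\psi^{-}}$: it writes $\widehat{\pi_{(-)}}$ via the double descent of Proposition \ref{descent_3_isogeny} and uses $-27A^{-}(\lm)=A^{+}(\lm)/r_+^2$, $4A^{-}(\lm)+27B^{-}(\lm)=B^{+}(\lm)/r_+^2$ and $1/r_+=-27r_-$, which is exactly your weighted-homogeneity reduction to $r_{\varepsilon}r_{-\varepsilon}=-1/27$. The worries in your Step 3 are unnecessary: the constant is exactly $c=r_{\varepsilon}r_{-\varepsilon}$ with no further weight factors, and no branch of $(A^{\varepsilon})^{3/2}$ enters, since Remark \ref{isogenybetE} works only with the models $E^{\pm}$.
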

\begin{proof}
    We show that $-{\psi}^{+}$ is the same map with a dual isogeny of $\psi^{-}$ which we denote by $\hat{\psi^{-}}$. By Remark \ref{isogenybetE}, it suffices to show that the two maps $-j_{(+)}\circ\pi_{(+)}$ and $\hat{\pi_{(-)}}\circ{j_{(-)}}^{-1}$ are the same. By Proposition \ref{descent_3_isogeny}, we can write the map $\hat{\pi_{(-)}}\circ {j_{(-)}}^{-1}:E^{+} \rightarrow E^{-}$ as follows:
    \begin{align*}
       && X=\frac{3\left(6\left(\frac{z}{{r_{+}}^3}\right)^2+\frac{6A^{+}(\lm)B^{+}(\lm)^2}{{r_{+}}^6}-3\left(\frac{u}{{r_+}^2}\right)^3-\left(2\frac{A^{+}(\lm)}{{r_+}^2}\right) \left(\frac{u}{{r_+}^2}\right)^2\right)}{\left(\frac{27u}{{r_+}^2}\right)^2},\\ && Y=\frac{27\left(\frac{z}{{r_+}^3}\right)\left(8\frac{A^{+}(\lm)B^{+}(\lm)^2}{{r_+}^6}-\left(\frac{u}{{r_+}^2}\right)^3-\left(4\frac{A^{+}(\lm)B^{+}(\lm)}{{r_+}^4}\right)\left(\frac{u}{{r_+}^2}\right)\right)}{\left(\frac{27u}{{r_+}^2}\right)^3}.
    \end{align*}
    Here we note that notations such as $r_+$ are those used in Remark \ref{isogenybetE} and that we have  $-27A^{-}(\lm)=A^{+}(\lm)/{r_+}^2,4A^{-}(\lm)+27B^{-}(\lm)=B^{+}(\lm)/{r_+}^2$. Since $1/r_+=-27r_{-}$, a direct calculation yields $-j_{(+)}\circ\pi_{(+)}=\hat{\pi_{(-)}}\circ{j_{(-)}}^{-1}$, which implies $-\psi^{+}=\hat{\psi^{-}}$. Hence $\psi^{-} \circ {\psi}^+={\psi}^+ \circ \psi^{-}=[-3]$.
\end{proof}

\begin{rem}
We give the explicit rational functions that define the isogeny $\psi^{-}$. (see \cite[5-1,5-2,5-3]{Github}.) By changing the sign of $\sqrt{\lm^2-\lm+1}$, we also obtain the explicit rational functions that define the isogeny $\psi^{+}$.
\footnotesize{
\begin{eqnarray*}
s(x,y)&=&\Biggl(\left(\left(\frac{8}{9} {\lm}  + \frac{4}{9}\right) \sqrt{{ {\lm} }^2- {\lm} +1}  - \frac{8}{9} {\lm} ^2 + \frac{8}{9} {\lm}  - \frac{5}{9}\right)x^5 \\ &+& \left(\left(\frac{4}{3} {\lm} ^2 + \frac{8}{9} {\lm} \right) \sqrt{{ {\lm} }^2- {\lm} +1}  + \frac{4}{3} {\lm} ^3 + \frac{2}{9} {\lm}^2 - \frac{4}{9} {\lm}  + \frac{4}{3} \right)x^4\\&+& \left(\left(-\frac{8}{3} {\lm} ^2 - \frac{8}{9}\right) \sqrt{{ {\lm} }^2- {\lm} +1}  - \frac{8}{3} {\lm} ^3 + 2 {\lm} ^2 - \frac{20}{9} {\lm}  - \frac{2}{3}\right)x^3\\&+& \left(\left(\frac{16}{9} {\lm}  - \frac{8}{9}\right) \sqrt{{ {\lm} }^2- {\lm} +1}  + \frac{16}{9} {\lm} ^2 - \frac{16}{9} {\lm}  + \frac{10}{9}\right)x^2y^2 \\&+& \biggl(\left(-\frac{4}{9} {\lm} ^4 + \frac{8}{9} {\lm} ^3 + \frac{4}{9} {\lm} ^2 + \frac{8}{9} {\lm} \right) \sqrt{{ {\lm} }^2- {\lm} +1} \\& -& \frac{4}{9} {\lm} ^5 + \frac{10}{9} {\lm} ^4 + \frac{4}{9} {\lm} ^3 -\frac{10}{9} {\lm} ^2 + \frac{20}{9} {\lm}  - \frac{4}{9}\biggr)x^2 \\&+& \biggl(\left(-\frac{32}{9} {\lm} ^2 + \frac{16}{9} {\lm}  - \frac{8}{9}\right) \sqrt{{ {\lm} }^2- {\lm} +1}  - \frac{32}{9} {\lm} ^3 + \frac{32}{9} {\lm} ^2 - \frac{28}{9} {\lm}  + \frac{4}{9}\biggr)xy^2 \\&+& \biggl(\left(\frac{4}{9} {\lm} ^4 - \frac{8}{9} {\lm} ^3 + \frac{8}{9} {\lm} ^2 - \frac{8}{9} {\lm}  + \frac{4}{9}\right) \sqrt{{ {\lm} }^2- {\lm} +1} \\& +& \frac{4}{9} {\lm} ^5 -  {\lm} ^4 + \frac{8}{9} {\lm} ^3 - \frac{2}{9} {\lm} ^2 - \frac{4}{9} {\lm}  + \frac{1}{3}\biggr)x \\ &+& \biggl(\left(\frac{16}{9} {\lm} ^3 -\frac{8}{9} {\lm} ^2 + \frac{8}{9} {\lm} \right) \sqrt{{ {\lm} }^2- {\lm} +1}  \\&+& \frac{16}{9} {\lm} ^4 - \frac{16}{9} {\lm} ^3 + 2 {\lm} ^2 - \frac{4}{9} {\lm}  + \frac{2}{9}\biggr)y^2\Biggr) \Bigg/ 
    \Biggl(x^4 + \left(-\frac{4}{3} {\lm}  - \frac{4}{3}\right)x^3\\ &+& \left(-\frac{2}{9} {\lm} ^2 + \frac{20}{9} {\lm}  - \frac{2}{9}\right)x^2 + \left(\frac{4}{9} {\lm} ^3 - \frac{4}{9} {\lm} ^2 - \frac{4}{9} {\lm}  + \frac{4}{9}\right)x \\ & +& \frac{1}{9} {\lm} ^4 - \frac{4}{9} {\lm} ^3 + \frac{2}{3} {\lm} ^2 - \frac{4}{9} {\lm}  + \frac{1}{9}\Biggr),
   \end{eqnarray*}
}
\footnotesize{
   \begin{eqnarray*}
    t(x,y)&=&\Biggl(\biggl(\left(-\frac{32}{27} {\lm} ^2 + \frac{32}{27} {\lm}  - \frac{14}{27}\right) \sqrt{{ {\lm} }^2- {\lm} +1} - \frac{32}{27} {\lm} ^3 + \frac{16}{9} {\lm} ^2 - \frac{14}{9} {\lm}  + \frac{13}{27}\biggr)x^6y \\ &+& \biggl(\left(\frac{64}{27} {\lm} ^3 - \frac{4}{3} {\lm}  + \frac{28}{27}\right) \sqrt{{ {\lm} }^2- {\lm} +1} + \frac{64}{27} {\lm} ^4 - \frac{32}{27} {\lm} ^3 - \frac{4}{9} {\lm} ^2 + \frac{58}{27} {\lm}  - \frac{26}{27}\biggr)x^5y \\ &+& \biggl(\left(-\frac{160}{27} {\lm} ^3 + \frac{146}{27} {\lm} ^2 - \frac{56}{27} {\lm}  - \frac{2}{27}\right) \sqrt{{ {\lm} }^2 - {\lm} +1} \\ & -& \frac{160}{27} {\lm} ^4 + \frac{226}{27} {\lm} ^3 - 7 {\lm} ^2 + \frac{44}{27} {\lm}  + \frac{7}{27}\biggr)x^4y \\ &+& \biggl(\left(-\frac{64}{27} {\lm} ^5 +\frac{160}{27} {\lm} ^4 - \frac{40}{27} {\lm} ^3 - \frac{8}{3} {\lm} ^2  + \frac{8}{3} {\lm}  - \frac{8}{9}\right) \sqrt{{ {\lm} }^2- {\lm} +1} \\ & -& \frac{64}{27} {\lm} ^6 + \frac{64}{9} {\lm} ^5 - \frac{16}{3} {\lm} ^4 - \frac{4}{27} {\lm} ^3 + 4 {\lm} ^2 - \frac{20}{9} {\lm}  + \frac{4}{27}\biggr)x^3y \\ & +& \biggl(\left(\frac{32}{27} {\lm} ^6 - \frac{46}{9} {\lm} ^4 + \frac{176}{27} {\lm} ^3 - \frac{28}{9} {\lm} ^2 + \frac{14}{27}\right) \sqrt{{ {\lm} }^2- {\lm} +1}  \\ &+& \frac{32}{27} {\lm} ^7 - \frac{16}{27} {\lm} ^6 - \frac{14}{3} {\lm} ^5 +\frac{251}{27} {\lm} ^4 - \frac{232}{27} {\lm} ^3 + \frac{38}{9} {\lm} ^2- \frac{34}{27} {\lm}  + \frac{11}{27}\biggr)x^2y \\ &+& \biggl(\left(-\frac{32}{27} {\lm} ^6+ \frac{76}{27} {\lm} ^5 - \frac{52}{27} {\lm} ^4 - \frac{8}{27} {\lm} ^3 + \frac{8}{9} {\lm} ^2  - \frac{4}{27} {\lm}  - \frac{4}{27}\right) \sqrt{{ {\lm} }^2- {\lm} +1}\\& -& \frac{32}{27} {\lm} ^7 + \frac{92}{27} {\lm} ^6 - \frac{34}{9} {\lm} ^5 + \frac{38}{27} {\lm} ^4 + \frac{4}{3} {\lm} ^3 - \frac{56}{27} {\lm} ^2 + \frac{34}{27} {\lm}  - \frac{10}{27}\biggr)xy \\ &+& \biggl(\left(\frac{2}{9} {\lm} ^6 - \frac{8}{9} {\lm} ^5 + \frac{38}{27} {\lm} ^4 - \frac{32}{27} {\lm} ^3 + \frac{2}{3} {\lm} ^2 - \frac{8}{27} {\lm} + \frac{2}{27}\right) \sqrt{{ {\lm} }^2- {\lm} +1} \\& + &\frac{2}{9} {\lm} ^7 -  {\lm} ^6 + \frac{52}{27} {\lm} ^5  - \frac{19}{9} {\lm} ^4 + \frac{38}{27} {\lm} ^3 - \frac{13}{27} {\lm} ^2 +\frac{1}{27}\biggr)y\Biggr)\Bigg/ \Biggl(x^6 + \left(-2 {\lm}  - 2\right)x^5\\ &+& \left(\frac{1}{3} {\lm} ^2 + \frac{14}{3} {\lm}  + \frac{1}{3}\right)x^4 + \left(\frac{28}{27} {\lm} ^3 - \frac{20}{9} {\lm} ^2 -\frac{20}{9} {\lm}  + \frac{28}{27} \right)x^3 \\ & +& \left(-\frac{1}{9} {\lm} ^4 - \frac{4}{3} {\lm} ^3 + \frac{26}{9} {\lm} ^2 - \frac{4}{3} {\lm}  - \frac{1}{9}\right)x^2\\ & +&\left(-\frac{2}{9} {\lm} ^5 + \frac{2}{3} {\lm} ^4 - \frac{4}{9} {\lm} ^3 - \frac{4}{9} {\lm} ^2 + \frac{2}{3} {\lm}  - \frac{2}{9}\right)x \\&  -&\frac{1}{27} {\lm} ^6 +\frac{2}{9} {\lm} ^5 - \frac{5}{9} {\lm} ^4 + \frac{20}{27} {\lm} ^3 - \frac{5}{9} {\lm} ^2 + \frac{2}{9} {\lm}  - \frac{1}{27}\Biggr).
\end{eqnarray*}
}

\end{rem}

\section{Endomorphism rings of supersingular $ E_{\Lambda^{\pm}(\lm)}$ 
with $\lm \in \fp$}
\label{supersingularellwithlambda}
We observe supersingular elliptic curves $ E_{\Lambda^{-}(\lm)}$ and $ E_{{\Lambda^{+}(\lm)}}$ with $\lm \in \fp$ and consider their endomorphism rings. We note that $\lm\ne0,1$ because $ E_{\Lambda^{-}(\lm)}$ and $ E_{{\Lambda^{+}(\lm)}}$ are nonsingular.  Let $p$ be a prime. We assume that $p \ge 5$.
Firstly, we consider the case of $p \equiv1 \bmod 4$.

\begin{lem}
\label{num_of_fpp_1}
Let $p$ be a prime such that $p \equiv 1 \bmod 4.$
Assume that the elliptic curves $ E_{\Lambda^{-}(\lm)}$ and $ E_{{\Lambda^{+}(\lm)}}$ with $\lm \in \fp$ are supersingular. Then
\begin{enumerate}
    \item [\rm{(a)}]The square root $\sqrt{{\lm}^2-\lm+1}$ does not belong to $\fp$. In particular we have $\lm^2-\lm+1\ne 0$.
    \item[\rm{(b)}] The number of $\mathbb{F}_{p^2}$-rational points of $E_{\Lm^{\varepsilon}(\lm)}$ is equal to $(p-1)^2$ for $\varepsilon=\pm1$.
    \item[\rm{(c)}] The ${p^2}$ th-power Frobenius map $F$ sending $(x,y) $ to $(x^{p^2},y^{p^2})$ is equal to the multiplication-by-$p$ map $[p]$.
\end{enumerate}
\end{lem}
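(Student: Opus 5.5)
The plan is to read off all three statements from how the $p$-th power Frobenius acts on $\sqrt{\lm^2-\lm+1}$, combined with the standard restrictions on Frobenius traces of supersingular curves. Throughout, $\lm\in\fp$, and the only part of the curves that is not obviously $\fp$-rational is $s:=\sqrt{\lm^2-\lm+1}$.

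For (a) I argue by contradiction. Suppose $s\in\fp$; this includes the case $\lm^2-\lm+1=0$, where $s=0$. Then $\Lm^{\varepsilon}(\lm)\in\fp$, so $E_{\Lm^{\varepsilon}(\lm)}$ is a Legendre curve over $\fp$ with all of its $2$-torsion $\fp$-rational. Hence $4\mid \#E_{\Lm^{\varepsilon}(\lm)}(\fp)$. On the other hand, the curve is supersingular and $p\ge 5$, so its Frobenius trace over $\fp$ is $0$ and $\#E_{\Lm^{\varepsilon}(\lm)}(\fp)=p+1\equiv 2 \bmod 4$. This is a contradiction, so $s\notin\fp$ and in particular $\lm^2-\lm+1\ne0$.

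For (b) and (c), we now know $s\in\fpp\setminus\fp$ and $s^p=-s$. So $E:=E_{\Lm^{\varepsilon}(\lm)}$ is defined over $\fpp$, its full $2$-torsion is $\fpp$-rational, and its Frobenius twist is $E^{(p)}=E_{\Lm^{-\varepsilon}(\lm)}$. Let $F$ be the $p^2$-power Frobenius endomorphism with trace $t$. Since $E$ is supersingular, $p\mid t$ and $|t|\le 2p$, so $t\in\{0,\pm p,\pm 2p\}$. Then $\#E(\fpp)=p^2+1-t$ must be divisible by $4$:
\begin{itemize}
\item $t=0$ gives $p^2+1\equiv 2 \bmod 4$, excluded;
\item $t=\pm p$ gives an odd number, excluded;
\item $t=\pm 2p$ gives $(p\mp1)^2$, allowed.
\end{itemize}
So $F^2-tF+p^2=(F\mp[p])^2=0$. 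Since $\mathrm{End}(E)$ has no zero divisors, $F=\pm[p]$. It remains to exclude $F=-[p]$.

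To fix the sign, consider the endomorphism $\varphi:=\psi^{-\varepsilon}\circ\pi_p$ of $E$, where $\pi_p:E\to E^{(p)}=E_{\Lm^{-\varepsilon}(\lm)}$ is the $p$-power Frobenius isogeny. Then $\deg\varphi=3p$. The explicit formulas in Remark \ref{isogenybetE} have coefficients in $\fp(s)$, and $\psi^{+}$ is obtained from $\psi^{-}$ by $s\mapsto -s$. Hence $(\psi^{-\varepsilon})^{(p)}=\psi^{\varepsilon}$, and therefore
\[
\varphi^2=\psi^{-\varepsilon}\circ(\psi^{-\varepsilon})^{(p)}\circ\pi_p^{(p)}\circ\pi_p=\psi^{-\varepsilon}\circ\psi^{\varepsilon}\circ F=[-3]\circ F ,
\]
using Lemma \ref{minus-3}. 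If $F=\delta[p]$ with $\delta=\pm1$, then $\varphi^2=-3\delta p$ is an integer. But $\varphi$ satisfies $\varphi^2-\mathrm{tr}(\varphi)\varphi+3p=0$, and $\varphi$ is not an integer because $3p$ is not a square. Hence $\mathrm{tr}(\varphi)=0$ and $\varphi^2=-3p$, which forces $\delta=+1$. So $F=[p]$, which is (c), and $\#E(\fpp)=\deg(1-F)=(p-1)^2$, which is (b).

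The main obstacle is the conjugation identity $(\psi^{-\varepsilon})^{(p)}=\psi^{\varepsilon}$ together with the exact sign in $\psi^{-}\circ\psi^{+}=[-3]$. One has to check that every auxiliary quantity in Remark \ref{isogenybetE} (the constants $A^{\pm}$, $B^{\pm}$, $r_{\pm}$, and the branch of $(A^{\varepsilon})^{3/2}$ used in the isomorphisms) is exchanged exactly by $s\mapsto -s$, with no extra automorphism $[-1]$ creeping in. I would check this directly from the explicit rational functions $s(x,y)$, $t(x,y)$ in the subsequent remark. I expect any stray $[-1]$ to be harmless in any case, since it would only replace $\varphi^2=-3F$ by $\varphi^2=-3(\pm F)$, and the argument above still yields $\varphi^2=-3p$. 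If it did affect the sign of $F$, I would instead fix the sign by exhibiting an explicit $\fpp$-rational point of order $4$ (a halving of a $2$-torsion point), which exists on a Legendre curve exactly when the relevant square roots lie in $\fpp$.
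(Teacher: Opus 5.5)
Your proof is correct. Part (a) is the paper's argument, but for (b)--(c) you take a genuinely different route.

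\textbf{How the paper does it.} The paper simply cites Auer and Top's Proposition~2.2 for (b): every supersingular Legendre curve with $p\equiv 1 \bmod 4$ has $(p-1)^2$ points over $\mathbb{F}_{p^2}$. It then obtains (c) from $(F-[p])^2=0$.

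\textbf{How you do it.} You first get $F=\pm[p]$ from the list of possible supersingular traces together with $4\mid \#E(\mathbb{F}_{p^2})$. You then fix the sign with the degree-$3p$ endomorphism $\varphi=\psi^{-\varepsilon}\circ\pi_p$. You derive $\varphi^2=-3p$ from the trace argument, which is the one the paper itself uses in Proposition~\ref{pmod43fpcase}, rather than from (c).

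\textbf{What your route buys and costs.} Your route reverses the logical order of (c) and Proposition~\ref{end1}, and it makes the lemma independent of Auer--Top. The price is that everything rests on the exact sign in Lemma~\ref{minus-3}, which is a machine computation. Contrary to your remark, a stray $[-1]$ would not be harmless: $\varphi^2=3F$ together with $\varphi^2=-3p$ would give $F=[-p]$.

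\textbf{Your fallback is easy to complete.} It needs nothing from Section~3.
\begin{itemize}
\item The point $(0,0)$ is halvable over $\mathbb{F}_{p^2}$, because $-1$ and $-\Lambda^{\varepsilon}(\lambda)=(\lambda-1)\bigl(\lambda+\varepsilon\sqrt{\lambda^2-\lambda+1}\bigr)^2$ are both squares in $\mathbb{F}_{p^2}$. This holds since $-1$ and $\lambda-1$ lie in $\mathbb{F}_p$.
\item Hence $E(\mathbb{F}_{p^2})$ contains a point of order $4$.
\item This is impossible if $F=-[p]$, since then $E(\mathbb{F}_{p^2})=\ker(1-F)=E[p+1]$ and $p+1\equiv 2 \bmod 4$.
\end{itemize}
This is essentially Auer and Top's argument specialized to these curves.
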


\begin{proof}
(a) If $\sqrt{\lm^2-\lm+1} \in \fp$, the elliptic curve $ E_{\Lambda^{\varepsilon}(\lm)}$ is defined over $\fp$, and in particular all $2$-torsion points are $\fp$-rational. Therefore $4$ divides the number of $\fp$-rational points of the elliptic curve. However, since the elliptic curve is supersingular, we know that $\#E_{\Lm^{\varepsilon}(\lm)}(\fp)=p+1$, which is not divisible by $4$ because $p \equiv1 \bmod 4$.

(b)Auer and Top \cite[Proposition 2.2]{AT02}.

(c)
From (b), the trace of the ${p^2}$ th-power Frobenius map is $1+p^2-\#E_{\Lambda^{\varepsilon}(\lm)}(\mathbb{F}_{p^2})=2p$. Thus $F$ satisfies ${F}^2-2pF+p^2=0$ in $\text{End}\left(E_{\Lambda^{\varepsilon}(\lm)}\right)$ and therefore $F$ is equal to $[p]$. 
\end{proof}

The next Proposition tells us the property of the endomorphism rings of supersingular elliptic curves $E_{\Lm^{-}(\lm)}$ and $E_{\Lm^{+}(\lm)}$.
\begin{prop}
\label{end1}
    Let $p$ be a prime such that $p \equiv 1 \bmod 4.$ Assume that the elliptic curve $ E_{\Lambda^{-}(\lm)}$ and $ E_{{\Lambda^{+}(\lm)}}$ with $\lm \in \fp$ are supersingular. Then the endomorphism ring of $ E_{\Lambda^{\varepsilon}(\lm)}$ contains $\bZ \left[ \frac{1+\sqrt{-3p}}{2}\right]$ for $\varepsilon=\pm1$.

\end{prop}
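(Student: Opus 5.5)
Write $s=\sqrt{\lm^2-\lm+1}$. By Lemma \ref{num_of_fpp_1}(a) we have $s\in\fpp\setminus\fp$, so $s^p=-s$. The plan is to build an endomorphism of $E_{\Lm^{\varepsilon}(\lm)}$ of degree $3p$ and trace $\pm 3p$-compatible with $\frac{1+\sqrt{-3p}}{2}$. We will do this by composing the $p$-th power Frobenius with the degree-$3$ isogeny $\psi^{-\varepsilon}$ from Remark \ref{isogenybetE}.

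\textbf{Step 1: a degree-$3p$ endomorphism.} Since $\lm\in\fp$ and $s^p=-s$, we get $\Lm^{\varepsilon}(\lm)^p=\Lm^{-\varepsilon}(\lm)$. Hence the $p$-power Frobenius $\pi:(x,y)\mapsto(x^p,y^p)$ is an isogeny $E_{\Lm^{\varepsilon}(\lm)}\to E_{\Lm^{-\varepsilon}(\lm)}$ of degree $p$. Put $\alpha:=\psi^{-\varepsilon}\circ\pi\in\operatorname{End}(E_{\Lm^{\varepsilon}(\lm)})$, which has degree $3p$.

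\textbf{Step 2: compute $\alpha^2$.} We use that the rational functions defining $\psi^{\pm}$ have coefficients in $\fp(s)$, and that $\psi^{+}$ is obtained from $\psi^{-}$ by $s\mapsto -s$ (as noted after Lemma \ref{minus-3}). Frobenius acts on these coefficients by $s\mapsto -s$, so $\pi\circ\psi^{-\varepsilon}=\psi^{\varepsilon}\circ\pi$. Therefore
\[
\alpha^2=\psi^{-\varepsilon}\circ\pi\circ\psi^{-\varepsilon}\circ\pi=\psi^{-\varepsilon}\circ\psi^{\varepsilon}\circ\pi^2=[-3]\circ F=[-3]\circ[p]=[-3p],
\]
by Lemma \ref{minus-3} and Lemma \ref{num_of_fpp_1}(c). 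So $\alpha$ behaves like $\sqrt{-3p}$, and $\bZ[\sqrt{-3p}]\subset\operatorname{End}$.

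\textbf{Step 3: upgrade to the maximal order.} Since $p\equiv1\bmod4$, we have $-3p\equiv1\bmod4$, so $\bZ[\frac{1+\sqrt{-3p}}{2}]$ is the maximal order containing $\bZ[\sqrt{-3p}]$ with index $2$. It suffices to show that $1+\alpha$ is divisible by $2$ in $\operatorname{End}$, that is, that $1+\alpha$ kills $E[2]$.

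The $2$-torsion points are $(0,0)$, $(1,0)$ and $(\Lm^{\varepsilon},0)$. Frobenius sends them to $(0,0)$, $(1,0)$ and $(\Lm^{-\varepsilon},0)$ on $E_{\Lm^{-\varepsilon}(\lm)}$. We then need $\psi^{-\varepsilon}$ to send $(0,0)\mapsto(0,0)$, $(1,0)\mapsto(1,0)$ and $(\Lm^{-\varepsilon},0)\mapsto(\Lm^{\varepsilon},0)$. This is a direct check with the explicit formulas for $s(x,y)$ in the preceding Remark: evaluate at $y=0$ and $x=0,1,\Lm$. If the check holds, $\alpha$ acts as the identity on $E[2]$, so $1+\alpha$ kills $E[2]$ and $(1+\alpha)/2\in\operatorname{End}$.

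\textbf{Main obstacle.} Step 3 is the delicate part. The isogeny $\psi^{-\varepsilon}$ might permute the $2$-torsion nontrivially, for example $(0,0)\leftrightarrow(1,0)$. In that case $\alpha$ would act on $E[2]$ by a nontrivial permutation, and since $\deg\alpha$ is odd, $\alpha$ cannot be made to act trivially by adjusting it with $\pm1$.

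If the direct computation fails, I would replace $\alpha$ by $\alpha\circ\theta$, where $\theta$ is an automorphism (for $j=1728$-type cases) or a suitable twist. Alternatively, I would argue via the $\fpp$-rationality of $\alpha$ together with Frobenius-equals-$[p]$ to pin down the action on $E[2]$. I expect the explicit evaluation, relying on the same computational appendix \cite{Github}, to confirm that $\alpha$ acts trivially on $E[2]$.
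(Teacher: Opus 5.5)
Your Steps 1 and 2 are the paper's proof. Since $\pi\circ\psi^{\varepsilon}=\psi^{-\varepsilon}\circ\pi$, your $\alpha=\psi^{-\varepsilon}\circ\pi$ is, for $\varepsilon=-1$, exactly the paper's $F\circ\psi^{-}$. The identity $\alpha^2=[-3p]$ then follows from Lemma~\ref{minus-3} and Lemma~\ref{num_of_fpp_1}(c), as you say, and reducing Step 3 to ``$\alpha$ acts trivially on $E[2]$'' is also what the paper does.

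\textbf{The gap.} You never establish this triviality; you leave it as ``if the check holds''. Yet it is the entire content of passing from $\bZ[\sqrt{-3p}]$ to $\bZ[\frac{1+\sqrt{-3p}}{2}]$, and it cannot be obtained abstractly. The action on $E[2]$ of an endomorphism with square $-3p$ is exactly what decides whether it generates $O_{3p}$ or only $O_{12p}$. Both outcomes occur among supersingular curves, e.g.\ reductions of curves with CM by $O_{12p}$ (cf.\ Proposition~\ref{commonof3pand12p}). Your fallbacks do not help. For $j\ne 0,1728$ the only automorphisms are $\pm1$, which are trivial on $E[2]$. Neither $\fpp$-rationality nor $\pi^2=[p]$ constrains which permutation of $E[2]$ the map $\alpha$ induces.

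\textbf{How to close it.} The check succeeds, and two points suffice. Since $\deg\alpha$ is odd, $\alpha$ restricts to a group automorphism of $E[2]\cong(\bZ/2\bZ)^2$, so fixing $(0,0)$ and $(1,0)$ forces it to fix $(\Lm^{\varepsilon}(\lm),0)$. As $\pi$ fixes $(0,0)$ and $(1,0)$, you need $\psi^{-\varepsilon}$ to fix them. The paper records this as $s(0,0)=0$ and $s(1,0)=1$; these values do not depend on the sign of $\rho:=\sqrt{\lm^2-\lm+1}$, so they hold for both $\psi^{\pm}$.

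Be careful if you plug into the displayed formula for $s(x,y)$. Taken literally, its $\rho$-terms do not cancel at $x=1$: it returns $1+16\rho/\lm^3$, which is in general not the abscissa of a $2$-torsion point, so the display contains a misprint. It is more reliable to use the factorization in Remark~\ref{isogenybetE}:
\begin{itemize}
\item At a point with $Y=0$, the relation $X^3=-A(X-B)^2$ turns the formula of Proposition~\ref{descent_3_isogeny} into $u=3A(X-3B)^2/X^2$.
\item For $\psi^{-}$, one gets $X-3B^{-}(\lm)=(\lm-1)(\rho-\lm)/\rho$ at $x=0$ and $X-3B^{-}(\lm)=\lm(\rho-\lm+1)/\rho$ at $x=1$.
\item A short expansion using $(2\lm-1)^2-4\rho^2=-3$, $(\lm+1)^2-4\rho^2=-3(\lm-1)^2$, $(2-\lm)^2-4\rho^2=-3\lm^2$ and $\rho^2=\lm^2-\lm+1$ then yields $s=0$ and $s=1$ respectively.
\end{itemize}
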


\begin{proof}
    We may assume that $\varepsilon=-1$. Since $ E_{\Lambda^{-}(\lm)}$ is supersingular, the $p$ th-power Frobenius map $F$ satisfies $F^2=p$ from Lemma \ref{num_of_fpp_1} (c).  Our aim is to construct an endomorphism corresponding to $\sqrt{-3p}$ using the isogeny $\psi^-$ defined in Remark $\ref{isogenybetE}$ and Frobenius map $F$.  Since $\sqrt{{\lm}^2-\lm+1}$ is in $\fpp \smallsetminus \fp$, the $p$ th-power Frobenius map defines an isogeny between $ E_{\Lambda^{-}(\lm)}$ and  $E_{{\Lambda^{+}(\lm)}}$. Then we see that $ F \circ \psi^-$ defines an endomorphism of $ E_{\Lambda^{-}(\lm)}$. In addition, since $\psi^-$ and ${\psi}^+$ can be written as rational functions with the coefficients expressed by the combination of $\lm$ and $\sqrt{{\lm}^2-\lm+1}$, and we can obtain ${\psi}^{+}$ by changing the sign of $\sqrt{{\lm}^2-\lm+1}$ appearing in the rational functions that define $\psi^-$, we see that $F \circ \psi^- = {\psi}^+ \circ F$. Furthermore, from Lemma \ref{minus-3}, we find that $F \circ \psi^- \circ F \circ \psi^-= {\psi}^+ \circ F\circ F \circ \psi^- =[p]\circ{\psi}^+  \circ \psi^- =[-3p].$ 
    Finally, a map ${1+\sqrt{-3p}}$ is divisible by $[2]$ if and only if all $2$-torsion points of $E_{\Lm^{-}(\lm)}$ belong to the kernel of the map $1+\sqrt{-3p}$, which occurs precisely when $\sqrt{-3p}$ acts trivially on $ E_{\Lambda^{-}(\lm)}[2]$. Substituting $(x,y)=(0,0)$ and $(x,y)=(1.0)$ respectively into the rational function $s(x,y)$ in remark $\ref{isogenybetE}$, we obtain $s(0,0)=0/(\lm-1)^4=0$ and $s(1,0)={\lm}^4/{\lm}^4=1$, which implies that $F \circ \psi^-(0,0)=(0,0)$ and that $F \circ \psi^-(1,0)=(1,0)$. Since the group of $2$-torsion points is isomorphic to $\bZ/2\bZ \times \bZ/2\bZ$, we find that $\sqrt{-3p}$ acts trivially on $ E_{\Lambda^{-}(\lm)}[2]$. Hence the endomorphism ring of $ E_{\Lambda^{-}(\lm)}$ contains $\bZ \left[ \frac{1+\sqrt{-3p}}{2}\right]$.

\end{proof}

\begin{rem}
\label{wheretomap}
    From the argument in Proposition \ref{end1}, we find that $\psi^\varepsilon$ maps the $2$-torsion point $(\Lambda^{\varepsilon}(\lm),0)$ to $({\Lambda^{-\varepsilon}(\lm)},0)$.
\end{rem}

Next, we consider the case of $p \equiv3 \bmod 4$. Our aim is to show that $\lm^2-\lm+1$ is square in ${\fp}^{*}$ and that $E_{\Lm^{-}(\lm)}$ and $E_{\Lm^{+}(\lm)}$ are defined over $\fp$ and to observe their endomorphism rings. We firstly prepare the lemma describing the number of rational points of $E_{\Lm^{-}(\lm)}$ and $E_{\Lm^{+}(\lm)}$.
\begin{lem}
\label{num_of_fpp_3}
Let $p$ be a prime such that $p \equiv 3 \bmod 4.$
Assume that the elliptic curves $ E_{\Lambda^{-}(\lm)}$ and $ E_{{\Lambda^{+}(\lm)}}$ with $\lm \in \fp$ are supersingular, and that $\sqrt{{\lm}^2-\lm+1}$ does not belong to $\fp$. Then, the number of $\mathbb{F}_{p^2}$-rational points of $E_{\Lm^{\varepsilon}(\lm)}$ is equal to $(p+1)^2$ for $\varepsilon=\pm1$. In addition the \textcolor{black}{${p^2}$-th power} Frobenius map sending $(x,y) $ to $(x^{p^2},y^{p^2})$ is equal to the multiplication-by-$-p$ map $[-p]$.
\end{lem}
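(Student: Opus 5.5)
The plan is to follow the proof of Lemma \ref{num_of_fpp_1} (b), (c), with the sign of the Frobenius now to be determined. Since $\lm\in\fp$ and $\sqrt{\lm^2-\lm+1}\in\fpp\smallsetminus\fp$, the curve $E_{\Lm^{\varepsilon}(\lm)}$ is in Legendre form $Y^2=X(X-1)(X-\Lm^{\varepsilon}(\lm))$ with $\Lm^{\varepsilon}(\lm)\in\fpp$, and $\Lm^{\varepsilon}(\lm)\ne 0,1$ because $\lm\ne 0,1$. So $E_{\Lm^{\varepsilon}(\lm)}$ is defined over $\fpp$, and all of its $2$-torsion points are $\fpp$-rational. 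Let $\pi$ be the $p^2$-th power Frobenius and $t$ its trace, so that $\#E_{\Lm^{\varepsilon}(\lm)}(\fpp)=1+p^2-t$.

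First I restrict $t$. Because the curve is supersingular, $p\mid t$, and the Hasse bound $|t|\le 2p$ gives $t\in\{0,\pm p,\pm 2p\}$. Because $E[2]\subset E(\fpp)$, we have $4\mid 1+p^2-t$. Since $p$ is odd, $p^2\equiv 1 \bmod 8$, so $1+p^2\equiv 2\bmod 8$. This rules out $t=0$, and also $t=\pm p$, since then $1+p^2-t$ is odd. Hence $t=\pm 2p$. The characteristic polynomial is then $(\pi\mp p)^2$. As in Lemma \ref{num_of_fpp_1} (c), $(\pi\mp p)^2=0$ in the domain $\mathrm{End}(E)$ forces $\pi=[\pm p]$.

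The main step is to exclude $\pi=[p]$, i.e.\ $t=2p$, and show $t=-2p$. This gives both claims at once: $\#E(\fpp)=1+p^2+2p=(p+1)^2$ and $\pi=[-p]$. My first choice is to invoke Auer and Top \cite[Proposition 2.2]{AT02}, as in Lemma \ref{num_of_fpp_1} (b). It states that a supersingular Legendre curve with parameter in $\fpp$ satisfies $E(\fpp)\cong(\bZ/(p+1)\bZ)^2$ when $p\equiv 3\bmod 4$, and $(\bZ/(p-1)\bZ)^2$ when $p\equiv1\bmod 4$.

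If I want a self-contained argument instead, I would use the $4$-torsion. On $E[4]$, $[p]$ acts as $[-1]$ because $p\equiv 3 \bmod 4$, while $[-p]$ acts trivially. So it suffices to exhibit a point of exact order $4$ defined over $\fpp$, equivalently to show that the halving of some $2$-torsion point is $\fpp$-rational. For $(0,0)$ on the Legendre curve, the halving points have $x$-coordinate $\pm\sqrt{\Lm^{\varepsilon}(\lm)}$ and require $-\Lm^{\varepsilon}(\lm)$ and $\Lm^{\varepsilon}(\lm)(1-\Lm^{\varepsilon}(\lm))$-type quantities to be squares in $\fpp$. Here $\Lm^{\varepsilon}(\lm)=(1-\lm)(\lm+\varepsilon\sqrt{\lm^2-\lm+1})^2$, and $1-\lm\in\fp$ is automatically a square in $\fpp$. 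The needed square conditions should then follow from this explicit shape, together with $-1$ being a square in $\fpp$. I expect checking the remaining factor, $1-\Lm^{\varepsilon}(\lm)$, to be the delicate point, and I would fall back on the Auer--Top citation if it does not factor nicely.
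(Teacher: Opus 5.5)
Your main route is correct and is essentially the paper's. The paper proves this lemma by ``the same argument as Lemma \ref{num_of_fpp_1}'': it cites Auer--Top for $\#E_{\Lm^{\varepsilon}(\lm)}(\fpp)=(p+1)^2$, then reads off $\pi=[-p]$ from $(\pi+p)^2=0$ in the domain $\mathrm{End}(E)$. Your preliminary narrowing to $t=\pm 2p$ (using $p\mid t$, the Hasse bound, and $4\mid\#E(\fpp)$) is correct. It becomes redundant once Auer--Top is cited, but it usefully isolates the one remaining bit of information, the sign.

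Your self-contained alternative is a genuinely more elementary route, and it already closes; the obstacle you anticipate is not there.
\begin{itemize}
\item To halve $(0,0)$ on $Y^2=X(X-1)(X-\Lm)$ you only need $\Lm$ and $-1$ to be squares in $\fpp$. From $x(2P)=(x^2-\Lm)^2/(4y^2)$, the halves are $(s,\pm\sqrt{-1}\,s(s-1))$ with $s^2=\Lm$, since $y^2=s(s-1)(s-s^2)=-s^2(s-1)^2$.
\item Here $\Lm^{\varepsilon}(\lm)=(1-\lm)(\lm+\varepsilon\sqrt{\lm^2-\lm+1})^2$ with $1-\lm\in\fp$. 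Every element of $\fp$, including $-1$, is a square in $\fpp$, so this point is $\fpp$-rational.
\item It has exact order $4$ because $s^2=\Lm\neq 0,1$ gives $s\neq 0,1$.
\item For $p\equiv 3\bmod 4$, $[p]$ acts as $[-1]$ on $E[4]$, so it cannot fix this point. Hence $\pi=[-p]$ and $\#E(\fpp)=\deg(1-\pi)=(p+1)^2$.
\end{itemize}
The quantity $1-\Lm$ plays no role in this argument. It is in any case also a square: $1-\Lm^{\varepsilon}(\lm)=\lm(1-\lm-\varepsilon\sqrt{\lm^2-\lm+1})^2$, as recorded in the table in the proof of Lemma \ref{num_iso}.

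So the trade-off is this. The citation gives the result for every supersingular Legendre parameter in $\fpp$. Your argument uses the explicit shape of $\Lm^{\pm}(\lm)$ with $\lm\in\fp$ to replace that black box with a few lines.
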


\begin{proof}
    Follows from the same argument with Lemma \ref{num_of_fpp_1}.
\end{proof}

Now we are ready to show that $\lm^2-\lm+1$ is square in ${\fp}^{*}$
\begin{prop}
\label{pmod43fpcase}
    Let $p$ be a prime such that $p \equiv 3 \bmod 4.$ Assume that the elliptic curves $ E_{\Lambda^{-}(\lm)}$ and $ E_{{\Lambda^{+}(\lm)}}$ with $\lm\in\fp$ are supersingular. Then $\sqrt{{\lm}^2-\lm+1}$ belongs to $\fp$.
\end{prop}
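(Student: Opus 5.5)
Assume for contradiction that $\sqrt{\lm^2-\lm+1}\notin\fp$, so that Lemma \ref{num_of_fpp_3} applies. Then the $p^2$-th power Frobenius on $E_{\Lm^{\varepsilon}(\lm)}$ equals $[-p]$. We copy the construction in the proof of Proposition \ref{end1}. Let $F$ denote the $p$-th power Frobenius. Since $\sqrt{\lm^2-\lm+1}\in\fpp\smallsetminus\fp$, $F$ maps $E_{\Lm^{-}(\lm)}$ to $E_{\Lm^{+}(\lm)}$ and satisfies $F\circ\psi^-=\psi^+\circ F$. Hence $\alpha:=F\circ\psi^-$ is an endomorphism of $E_{\Lm^{-}(\lm)}$, and by Lemma \ref{minus-3}
\[
\alpha^2=\psi^+\circ F^2\circ\psi^-=[-p]\circ\psi^+\circ\psi^-=[3p].
\]
So $\alpha$ is an endomorphism with $\alpha^2=3p$.

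We now show this is impossible. For any nonzero endomorphism $\beta$ of an elliptic curve, $\beta\hat\beta=\deg\beta>0$ and $\beta+\hat\beta$ is an integer $t$, so $\beta^2-t\beta+\deg\beta=0$. Here $\alpha^2=3p$ gives $\deg\alpha=3p$, and then $\alpha^2-t\alpha+3p=0$ together with $\alpha^2=3p$ yields $t\alpha=6p$. Thus $\alpha$ is the integer $6p/t$, whose square is $3p$. That is impossible, since $3p$ is not a perfect square.

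Equivalently, $\mathrm{End}$ of a supersingular curve is an order in a definite quaternion algebra, where every non-integer element has negative-definite reduced trace-zero part, so $\mathbb{Q}(\alpha)$ would have to be imaginary quadratic. But $x^2=3p$ generates the real field $\mathbb{Q}(\sqrt{3p})$. Hence $\sqrt{\lm^2-\lm+1}\in\fp$.

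The main point to verify is the sign bookkeeping. First, the $p^2$-Frobenius is $[-p]$ rather than $[p]$ in this case; this comes from $\#E(\fpp)=(p+1)^2$, giving trace $-2p$. Second, $\psi^+\circ\psi^-=[-3]$ by Lemma \ref{minus-3}. Together these give $\alpha^2=+3p$, exactly the opposite sign from the $p\equiv1\bmod4$ case, and this sign is what produces the contradiction. I also need that $F\circ\psi^-=\psi^+\circ F$ still holds. The argument is the same as before: the coefficients of $\psi^-$ lie in $\fp(\sqrt{\lm^2-\lm+1})$, and the $p$-th power Frobenius conjugates $\sqrt{\lm^2-\lm+1}$ to $-\sqrt{\lm^2-\lm+1}$.
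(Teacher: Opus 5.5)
Your proposal is correct and is essentially the paper's proof: you build the endomorphism $\alpha=F\circ\psi^-$ of $E_{\Lambda^{-}(\lambda)}$ with $\alpha^2=[-p]\circ\psi^+\circ\psi^-=[3p]$ (via Lemma \ref{num_of_fpp_3} and Lemma \ref{minus-3}), and then use its characteristic polynomial to get $t\alpha=6p$. The paper finishes by comparing degrees to get $t^2=12p$, whereas you conclude that $\alpha$ would be a rational square root of $3p$; these are the same contradiction, and both tacitly use $t\neq 0$ (immediate from $t\alpha=6p$) and $p\neq 3$ (guaranteed by the standing assumption $p\ge 5$).
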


\begin{proof}
    Assume that $\sqrt{{\lm}^2-\lm+1}$ did not belong to $\fp$. By Lemma \ref{num_of_fpp_3}, we follow a similar approach to Proposition \ref{end1} to find that the endomorphism ring of $ E_{\Lambda^{\varepsilon}(\lm)}$ for $\varepsilon=\pm1$ has a map that corresponds to $\sqrt{3p}$. We denote this map by $\phi$. Then $\phi$ satisfies ${\phi}^2-\text{tr}(\phi){\phi}+\text{deg}(\phi)=0$ in the endmorphism ring, and we find that $[\text{tr}(\phi)]\circ \phi=6p$. Comparing the degree of the maps, we obtain $12p=\text{deg}[\text{tr}(\phi)]$, which must be a square number. Hence $p$ should be $3$, but we assume that $p$ is greater than $3$. This yields a contradiction. 
\end{proof}
We provide further details in the case of $p\equiv7 \bmod 12$.
\begin{cor}
\label{peq7mod12}
    Let $p$ be a prime such that $p \equiv 7 \bmod 12.$ Then there is no $\lm \in \fp$  such that the elliptic curves $ E_{\Lambda^{-}(\lm)}$ and $ E_{{\Lambda^{+}(\lm)}}$ are supersingular.   
\end{cor}

\begin{proof}
    Assume that the elliptic curve $ E_{\Lambda^{\varepsilon}(\lm)}$ for $\varepsilon=\pm1$ is supersingular with a $\lm \in \fp$. Then, by Proposition \ref{pmod43fpcase}, the square root $\sqrt{{\lm}^2-\lm+1}$ belongs to $\fp$. Thus we have nontrivial $\fpp$-rational $3$-torsion points of $E_{\Lambda^{\varepsilon}(\lm)}$ from Lemma \ref{x-three}. This implies that $\#E_{\Lambda^{\varepsilon}(\lm)}(\fpp)[3]$ is divided by $3$, so is $\#E_{\Lambda^{\varepsilon}(\lm)}(\fpp)$. By Silverman \cite[\S 5 p.155 5.15]{AEC}, the number of $\fpp$-rational points of $E_{\Lambda^{\varepsilon}(\lm)}$ is $(p+1)^2$.  However, since $p \equiv 7 \bmod 12$, it is not divisible by $3$. \textcolor{black}{This is} a contradiction. 
\end{proof}
Now we obtain some properties of the endomorphism rings of $E_{\Lm^{-}(\lm)}$ and $E_{\Lm^{+}(\lm)}$ in the case of $p \equiv 3\bmod 4$. 
\begin{cor}
  Let $p$ be a prime such that $p \equiv 3 \bmod 4$.  Assume that the elliptic curves $ E_{\Lambda^{-}(\lm)}$ and $ E_{{\Lambda^{+}(\lm)}}$ with $\lm \in \fp$ are supersingular. Then the endomorphism ring of $ E_{\Lambda^{\varepsilon}(\lm)}$ for $\varepsilon=\pm1$ contains $\bZ \left[ \frac{1+\sqrt{-p}}{2}\right]$.
\end{cor}

\begin{proof}
    Since $ E_{\Lambda^{-}(\lm)}$ and $ E_{{\Lambda^{+}(\lm)}}$ are supersingular elliptic curves defined over $\fp$, we see that the composition $F \circ F$ corresponds to the map $[-p]$. In addition, all the $2$-torsion points of that elliptic curves are $\fp$-rational, so we find that the endomorphism ring contains $\bZ \left[ \frac{1+\sqrt{-p}}{2}\right]$.
\end{proof}

\section{Elliptic curves over $\overline{\fp}$ with the endomorphism ring containing $\bZ \left[ \frac{1+\sqrt{-3p}}{2}\right]$}
\label{with Endomorphism containing}
Let $p\ge 5$ such that $p\equiv1\bmod4$. In this section we show that any elliptic curve over $\overline{\fp}$ with the endomorphism ring containing $\bZ \left[ \frac{1+\sqrt{-3p}}{2}\right]$ \textcolor{black}{is isomorphic to}  $E_{\Lm^{\varepsilon}(\lm)}$ with some $\lm \in \fp$ and $\varepsilon\in\{\pm1\}$.

Let $E$ be an elliptic curve defined over $\overline{\fp}$ whose endomorphism ring contains $\bZ \left[ \frac{1+\sqrt{-3p}}{2}\right]$.  We \textcolor{black}{study} a map that corresponds to $\sqrt{-3p}$ in the endomorphism ring.

\begin{lem}
\label{compof-3pmap}
    A map that corresponds to the map $\sqrt{-3p}$ is the composition of Frobenius map and degree-$3$ isogeny. 
\end{lem}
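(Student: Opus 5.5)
Let $\alpha\in\operatorname{End}(E)$ be the endomorphism with $\alpha^2=[-3p]$, so $\deg\alpha=3p$. The plan is to factor $\alpha$ through its inseparable part and then show that the remaining separable piece has degree $3$.

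First, $E$ is supersingular. Indeed, $\operatorname{End}(E)$ contains an order of discriminant $-3p$, and $p$ divides this discriminant. If $E$ were ordinary, $\operatorname{End}(E)$ would be an order in an imaginary quadratic field in which $p$ splits. But $p$ ramifies in $\bQ(\sqrt{-3p})$, so this is impossible.

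Since $E$ is supersingular, every isogeny of degree divisible by $p$ is inseparable, and $\alpha$ has degree $3p$ with $p\nmid 3$. Hence the inseparable degree of $\alpha$ is exactly $p$. By the standard factorization of isogenies (Silverman II.2.12), we get
\[
\alpha=\beta\circ F,
\]
where $F:E\to E^{(p)}$ is the $p$-th power Frobenius and $\beta:E^{(p)}\to E$ is separable with $\deg\beta=3p/p=3$. So $\beta$ is a degree-$3$ isogeny. Its kernel is cyclic of order $3$, since it has order $3$ and $3$ is prime.

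Equivalently, we can write $\alpha=F\circ\beta'$ with $\beta':E\to E^{(1/p)}$ of degree $3$, if we want the order to match the construction in Proposition \ref{end1}. To see this, use $\hat\alpha=-\alpha$, since $\operatorname{tr}\alpha=0$ because $\alpha^2=-3p$. Then $\alpha=-\hat F\circ\hat\beta$, and for a supersingular curve $\hat F$ is $F$ composed with an automorphism. Alternatively, apply the factorization to $\hat\alpha$.

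The main point to check carefully is the inseparable-degree claim. One might worry that $\alpha$ factors through a power of Frobenius higher than the first. That is excluded because $p^2\nmid 3p$, so the inseparable degree is $p$ exactly. Also $\deg_i\alpha\neq 1$, because $\ker\alpha$ cannot have order $3p$ when $E[p]$ is trivial. With these two observations the lemma follows directly. The degree-$3$ isogeny obtained here is what will later be identified with $\psi^{\pm}$, after normalizing $E$ into the form of Lemma \ref{change_form3} via the $3$-torsion point in its kernel.
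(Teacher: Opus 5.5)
Your proof is correct and follows the paper's route: show $\deg_i\alpha=p$ and $\deg_s\alpha=3$, then apply Silverman's Corollary II.2.12 to write $\alpha=\beta\circ F$ with $\beta$ separable of degree $3$. The only difference is how you get inseparability. The paper just notes that $\alpha^2=[-3p]$ is inseparable, so $\alpha$ cannot be separable, and this needs no supersingularity (the paper proves supersingularity only afterwards, via Deuring lifting). Your detour through supersingularity and $E[p]=0$ is also valid, and your ramification argument gives a more direct proof of that later proposition.
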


\begin{proof}
    We denote by $\phi$ a map that corresponds to the map $\sqrt{-3p}$. The degree of $\phi$ is $3p$. Since multiplication-by-$p$ map $[p]$ is inseparable, the map $\phi$ is also inseparable and therefore we have ${\text{deg}}_i{\phi}=p$ and ${\text{deg}}_s{\phi}=3$. Thus, from Silverman \cite[Corolary 2.2]{AEC}, the map $\phi$ factors as
    \[
    E \overset{F}{\longrightarrow} E^{(p)} \overset{\nu}{\longrightarrow} E, 
    \]
    where the map $F$ is the $p$ th-power Frobenius map and the map $\nu$ is a separable isogeny of degree $3$.  
\end{proof}
For the degree-$3$ isogeny $\nu$ in the above lemma, since $\text{deg}(\nu)= \# \text{Ker}(\nu)=3$, we have $\text{Ker}(\nu)=\{O,Q,2Q\}$ with some order-3 point $Q\ne O$ of $E^{(p)}$. 
We note that $E$ is isomorphic to $E^{(p)}/\text{Ker}(\nu)$.
%
Furthermore, \textcolor{black}{we have}
\begin{prop}
    \textcolor{black}{The} elliptic curve $E$ is supersingular. 
\end{prop}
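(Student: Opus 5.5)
The plan is to show that the endomorphism ring of $E$ is non-commutative, or, more directly, that $E$ has no points of order $p$. The cleanest route uses the factorization of Lemma \ref{compof-3pmap}. Let $\phi$ denote the endomorphism corresponding to $\sqrt{-3p}$. Then $\phi^2=[-3p]$, so $\phi$ commutes with $[p]$ and $\phi\circ\phi=[-3]\circ[p]$.

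First, recall that $E$ is ordinary exactly when $[p]$ has separable degree $p$, that is, when $E[p]\cong\bZ/p\bZ$. In that case $\mathrm{End}(E)\otimes\bQ$ is an imaginary quadratic field $K$ in which $p$ splits. This is the standard Deuring result, and it is what I intend to invoke. Our $E$ has $\bZ[\sqrt{-3p}]\subset\mathrm{End}(E)$, hence $\bQ(\sqrt{-3p})\subset\mathrm{End}(E)\otimes\bQ$. If $E$ were ordinary, this would force $K=\bQ(\sqrt{-3p})$. But $p$ divides the discriminant $-3p$ (or $-12p$), so $p$ ramifies in $K$ rather than splitting. This contradicts ordinarity.

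If I want a self-contained argument instead of citing Deuring's splitting criterion, I will argue with degrees directly. Suppose $E$ is ordinary. Write $\phi=\nu\circ F$ as in Lemma \ref{compof-3pmap}, with $\deg_i\phi=p$. Then
\[
\deg_i(\phi^2)=p^2 .
\]
On the other hand, $\phi^2=[-3p]$ and $[3]$ is separable, so
\[
\deg_i(\phi^2)=\deg_i[p].
\]
For ordinary $E$ we have $\deg_i[p]=p$, since $\deg[p]=p^2$ and $\#E[p]=p$ gives $\deg_s[p]=p$. This yields $p^2=p$, a contradiction. Therefore $\deg_i[p]=p^2$, i.e.\ $E[p]=0$, which means $E$ is supersingular.

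The only subtle point is the multiplicativity of inseparable degrees under composition, together with the identity $\deg_s[p]=\#E[p]$. Both are standard facts (Silverman, Chapter II and III.6.4). The degree computation above is the heart of the argument, and I expect no real obstacle beyond verifying that $\deg_i\phi=p$ exactly. That was already established in the proof of Lemma \ref{compof-3pmap} from $\deg\phi=3p$ and the inseparability of $\phi$.
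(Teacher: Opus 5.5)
Your proof is correct, but it follows a different route from the paper. The paper's argument is two lines and goes through characteristic zero. By the Deuring lifting theorem (Theorem \ref{deuring}), $E$ lifts together with the endomorphism $(1+\sqrt{-3p})/2$, and the lift has CM by the maximal order $O_{3p}$. Hence $P_{3p}(j(E))\equiv 0 \bmod p$, and Deuring's criterion (Lemma \ref{nonquadraticp}) gives supersingularity because $p$ divides $3p$ to an odd power.

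You stay entirely in characteristic $p$. Your first version replaces lifting by the structure theorem for ordinary curves: $\mathrm{End}(E)\otimes\bQ$ would be an imaginary quadratic field in which $p$ splits, which clashes with $p$ ramifying in $\bQ(\sqrt{-3p})$. Your second version needs only multiplicativity of inseparable degrees. Since $[-3]$ is separable, $\deg_i[p]=\deg_i[-3p]=(\deg_i\phi)^2$ must be a perfect square, whereas it equals $p$ for an ordinary curve. You do not even need the exact value $\deg_i\phi=p$ from Lemma \ref{compof-3pmap}. Quoting it is not circular anyway, since that lemma only uses that $\phi^2=[-3p]$ is inseparable.

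Your argument has the advantage of being elementary and independent of CM theory and Hilbert class polynomials. The paper's argument yields, as a by-product, that $j(E)$ is a root of $P_{3p}(X)\bmod p$. That is the same lifting step that ties these curves to $P_{3p}$ and is used again in Theorem \ref{important}, so the paper needs this machinery regardless.
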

\begin{proof}
    From the Deuring lifting theorem, we have $P_{3p}(j(E))\equiv0 \bmod p$. Then we obtain the claim by Lemma \ref{nonquadraticp}.
\end{proof}
Changing the coordinates appropriately, we assume that $E$ is defined by the equation
\[
y^2=x^3+ax^2+bx+c
\]
with some $a,b,c \in \overline{\fp}$.  Since the Frobenius map $F:E \rightarrow E^{(p)}$ is a bijective homomorphism, there exists a unique point $P\ne O$ of $E$ such that $F(P)=Q$. The point $P$ is obviously a $3$-torsion point of $E$. Changing the coordinates appropriately, the coordinates of the point $P$ can be written as $(0,y_0)$. Then the coordinates of the point $Q$ can be written as $(0,y_1)$ where we set $y_1 ={y_0}^p$. We note that $y_0 \neq 0$ since $P$ is an order-$3$ point and that ${y_0}^2=c$. We denote by $T$ and $T'$ the order-$3$ groups $\{O,P,2P\} $ and $\{O,Q,2Q\} $ respectively. The tangent line at $P$ on $E$ is 
\[
y=\frac{b}{2y_0}x+y_0.
\]

Since $P$ is the $3$-torsion point, this line intersects $E$ at $P$ with multiplicity $3$, so we find that
\[ 
\left( \frac{b}{2y_0}x+y_0\right)^2 = ax^2+bx+c.
\]

Then we get  $4a{y_0}^2=b^2$. If $b=0$, then $a$ is also $0$. The next lemma tells us that this case does not occur.
\begin{lem}
    Under the above \textcolor{black}{notation, we have} $b\ne0$.
\end{lem}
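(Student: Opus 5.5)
We argue by contradiction. Suppose $b=0$. The relation $4ay_0^2=b^2$ with $y_0\neq 0$ then forces $a=0$, so $E$ is $y^2=x^3+c$ with $c=y_0^2\neq 0$, and $j(E)=0$. We will show this contradicts the hypothesis that $\operatorname{End}(E)$ contains $\bZ\left[\frac{1+\sqrt{-3p}}{2}\right]$ with $p\equiv 1 \bmod 4$.

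First, we use supersingularity. The curve $j=0$ is supersingular in characteristic $p$ exactly when $p\equiv 2 \bmod 3$. Since $p\equiv 1 \bmod 4$, we get $p\equiv 5 \bmod 12$. In particular $p\neq 7 \bmod 12$, so this step alone gives no contradiction, and we need a further obstruction.

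Next, we compare orders inside the quaternion algebra. $\operatorname{End}(E)$ is a maximal order $\mathcal O$ of the definite quaternion algebra $B_{p,\infty}$. It contains the automorphism $\zeta$ of order $3$ (or $6$), so it contains $\bZ[\zeta_3]=\cO_3$. By hypothesis it also contains an element $\phi$ with $\phi^2=-3p$ and $\frac{1+\phi}{2}\in\mathcal O$.

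The main step is to control the product $\zeta\phi$. I would compute the reduced trace $t=\operatorname{tr}(\zeta\phi)$. The reduced norm gives $\operatorname{nrd}(\zeta\phi)=3p$. The sublattice $\bZ\langle 1,\zeta,\phi,\zeta\phi\rangle$ has discriminant determined by the Gram matrix of the trace form, namely $d=\bigl(3\cdot 3p\cdot 4-(\text{term in } t)\bigr)^2/16$ up to normalization, i.e. roughly $(36p-t^2)^2/16$ after the correct normalization. Since this lattice sits inside a maximal order of discriminant $p$, its discriminant must be $p^2$ times a square, and the order must have index compatible with $\frac{1+\phi}{2}\in\mathcal O$. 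Combined with the bound $t^2\le 4\operatorname{nrd}=12p$, this should force a contradiction or an impossible congruence. Concretely, $t^2<12p$, and the requirement $p\mid (12p-t^2)$ (from the discriminant) gives $t=0$. Then $\phi$ anticommutes with $\zeta$, and $\bZ[\zeta,\frac{1+\phi}{2}]$ has reduced discriminant $3p/\text{index}$, whose $3$-part is incompatible with maximality at the prime $3$ (where $B$ is split). So the two suborders generate something non-integral at $3$, contradicting integrality.

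An alternative, more elementary route in the spirit of the paper is to use Proposition \ref{descent_3_isogeny}'s second case directly. With $a=b=0$, the quotient $E^{(p)}/T'$ is $\nu^2=\xi^3-27c^p$, which has $j=0$ and is isomorphic to $E$. Then $\phi=\nu\circ F$ is explicit: $(x,y)\mapsto\bigl((y^{2p}+3c^p)/x^{2p},\,y^p(x^{3p}-8c^p)/x^{3p}\bigr)$ composed with a scaling isomorphism $(x,y)\mapsto(u^2x,u^3y)$ with $u^6=c/(-27c^p)$. One then checks whether $\phi^2=[-3p]$ can hold. Note that $\phi$ kills the $3$-torsion point $P=(0,y_0)$, so $\phi^2$ does too, consistently with $[-3p]$. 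Hence the discrepancy must instead be detected on $2$-torsion, via $\frac{1+\phi}{2}$: the $2$-torsion points $(-\omega c^{1/3},0)$ must be fixed by $\phi$. Evaluating $\phi$ there gives $x=3c^p/(c^{2p/3}\omega^{2p})\cdot u^2$, and requiring $x=-\omega c^{1/3}$ for all three cube roots simultaneously should fail because of how $\omega^{2p}$ permutes the three roots. I expect this explicit verification to be the main obstacle: one must track the cube-root choices and the scaling constant $u$ carefully, and I would try it first, falling back on the quaternion argument if it does not close.
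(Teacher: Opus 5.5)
\textbf{There is a genuine gap.} The route you call ``the main step'', the quaternion argument, cannot succeed as set up. Your explicit route is the paper's proof and does close, but you left its decisive check undone.

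\textbf{Why the quaternion argument fails.} It uses only $j(E)=0$ together with the existence of $\phi$, and that combination is not contradictory. For $p=5$, which is allowed here, $0$ is the only supersingular $j$-invariant and $5$ ramifies in $\mathbb{Q}(\sqrt{-15})$. Hence $P_{15}(X)\equiv X^2 \pmod 5$, and the $j=0$ curve genuinely satisfies $\operatorname{End}(E)\supseteq\bZ[\frac{1+\sqrt{-15}}{2}]$.

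Accordingly, your step ``$p\mid t$ and $t^2<12p$ give $t=0$'' is false at $p=5$, where $t=\pm 5$. The bookkeeping also needs correcting. The Gram determinant of $1,\zeta,\phi,\zeta\phi$ under $\operatorname{tr}(x\bar y)$ is $(9p-t^2)^2$. When $t=0$, the obstruction is at $2$, not at $3$; an index of $9$ at $3$ is harmless. Since $\zeta\cdot\tfrac{1+\phi}{2}\in\operatorname{End}(E)$ has reduced trace $\tfrac{t-1}{2}$, $t$ must be odd. With these fixes the argument works for $p\ge 17$, but never for $p=5$.

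What your argument throws away is the information $b=0$ actually carries. It says that $\ker\nu=\{O,(0,\pm y_0)\}$ is the subgroup fixed by the order-$3$ automorphism $\zeta$. That fact, not $j(E)=0$, is what leads to the contradiction.

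\textbf{How to finish the explicit route.} Take $\gamma^3=c$ and $S_\omega=(-\omega\gamma,0)$ for $\omega$ a cube root of unity. Your own formula gives $x(\phi(S_\omega))=3u^2\gamma^{p}\omega^{-2p}$. Require $\phi(S_\omega)=S_\omega$ both for $\omega$ and for $1$, and divide the two conditions. This gives $\omega^{2p+1}=1$ for every cube root of unity $\omega$, i.e.\ $p\equiv 1 \pmod 3$, contradicting supersingularity of $j=0$. The paper does the same after normalizing $c=1$: the point $(-1,0)$ forces $u^2=-1/3$, and then the other two $2$-torsion points are swapped.

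\textbf{Quaternion version of the same fix.} With $c=1$, you have $\nu=\eta(1-\zeta)$ for a unit $\eta$, so $\phi=\eta(1-\zeta)F$. Here $F\zeta=\bar\zeta F$ and $F^2=-p$, which gives $\operatorname{tr}(\zeta\phi)=0$. Then $\operatorname{tr}\bigl(\zeta\cdot\tfrac{1+\phi}{2}\bigr)=-\tfrac12\notin\bZ$, a contradiction valid for every $p$, including $p=5$.
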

\begin{proof}
    If $b =0$, the elliptic curve $E$ is isomorphic to an elliptic curve $y^2=x^3+1$ and $T=T'=\{O,(0,1),(0,-1)\}$. Since this elliptic curve is supersingular precisely when $p \equiv 2 \mod 3$, the case $b=0$ does not occur when $p \equiv 1 \bmod 3.$
    Thus, we may assume that $p \equiv 2 \bmod 3$.  Let $E':=E^{(p)}/T'$ and $\nu_0:E^{(p)}\rightarrow E'$ the isogeny of degree $3$ defined by the rational functions described in Proposition \ref{descent_3_isogeny}. A map corresponding to $\sqrt{-3p}$ is the composition of the $p$ th-power Frobenius map $F$, degree-$3$ isogeny $\nu_0$ and some isomorphism $\iota :E'\rightarrow E$. By the assumption, the map $\sqrt{-3p}$ acts trivially on the group of the $2$-torsion points. We see that $E[2]=\{O,(-1,0),(\omega,0),(\overline{\omega},0)\}$ where $\omega$ is the root of an equation $x^2-x+1=0$ and $\overline{\omega}$ is its conjugate. We note that under the assumption that $p \equiv2 \bmod 3$ the square root $\sqrt{-3}$ does not belong to $\fp$. Applying Proposition \ref{descent_3_isogeny}, it is a calculation to check that the map $\nu_0\circ F$ maps $(-1,0)$ as follows.
\begin{align*}
 E \overset{F}{\longrightarrow} E^{(p)} \overset{\nu_0}{\longrightarrow} E';
 (-1,0) \mapsto (-1,0) \mapsto (3,0)  
\end{align*}
We note that an elliptic curve $E'$ is defined by the equation $y^2=x^3-27$. Since the map $\iota \circ \nu_0 \circ F$ acts trivially on the group of the $2$-torsion points, the isomorphism $\iota:E'\rightarrow E$ should map $(3,0)$ to $(-1,0)$. Thus, $\iota$ should be the isomorphism sending $(x,y) $ to $(u^2x,u^3y)$ where $u=\sqrt{-1/3}$. \textcolor{black}{On the other hand, the map $\iota \circ \nu_0\circ F$
\[
E \overset{F}{\longrightarrow} E \overset{\nu_0}{\longrightarrow} E'\overset{\iota}{\longrightarrow} E
\]
sends $(\omega,0)$ as follows:
\[
 (\omega,0)\mapsto (\overline{\omega},0) \mapsto (3/{\overline{\omega}}^2,0) \mapsto  (-1/{\overline{\omega}}^2,0)= (\overline{\omega},0).
\]
Hence, it does not preserve the $2$-torsion point $(\omega,0)$, which implies a contradiction.}
\end{proof}
Thus $b\neq 0$ and then $a \neq 0$. We fix $\sqrt{c} $ so that it satisfies $\sqrt{c}=y_0$. In addition, we fix $\sqrt{a}$ so that it satisfies $b=-2\sqrt{a}\cdot \sqrt{c}$. Then we have
\[
ax^2+bx+c=a\left( x^2+\frac{b}{a} x + \frac{c}{a}  \right)=a\left( x^2-2\frac{\sqrt{a}\sqrt{c}}{a} x + \frac{c}{a}  \right) =a\left( x -\frac{\sqrt{c}}{\sqrt{a}} \right)^2.
\]
Therefore the elliptic curve $E$ is defined by
\[
y^2=x^3+a\left( x -\frac{\sqrt{c}}{\sqrt{a}} \right)^2.
\]
Changing the coordinates by $X=x/a,Y=y/(a\sqrt{a})$, the equation is
\[
y^2=x^3+\left( x - \frac{\sqrt{c}}{a\sqrt{a}}\right)^2.
\]
The point $(0,y_0)$ is mapped to $(0,\frac{\sqrt{c}}{a\sqrt{a}})$ which we denote again by $(0,y_0)$. So the equation defining $E$ is
\[
y^2=x^3+(x-y_0)^2,
\]
and the equation defining $E^{(p)}$ is
\[
y^2=x^3+(x-y_1)^2.
\]
Let $\nu_0$ be the isogeny of degree $3$ from $E^{(p)}$ to $E^{(p)}/T'$ defined by the rational functions described in Proposition \ref{descent_3_isogeny}. Then the quotient curve $E^{(p)}/T'$ is defined by the equation
\[
y^2=x^3-27\left( x-(4+27y_1)\right)^2.
\]
We know that there is an isomorphism, say $\iota$, between $E$ and $E^{(p)}/T'$.  Let $Q'$ be a point of $E^{(p)}$ such that $\nu_0(Q')$ is the new $3$-torsion point.  That is, the point $Q'$ satisfies ${\nu}_0(\{O,Q',2Q'\})=\{O,(0,3\sqrt{3}(4+27y_1)),(0,-3\sqrt{3}(4+27y_1))\}$.  By Proposition \ref{descent_3_isogeny}, we see that the point $Q'$ is another three torsion point and that $E^{(p)}[3]$ is generated by $Q$ and $Q'$.  We set $P':=F^{-1}(Q')$.  Then we also see that $E[3]$ is generated by $P$ and $P'$. 

\[
\begin{tikzcd}
 E\arrow[r, "F "] & E^{(p)} \arrow[r, "\nu"] \arrow[d,"{\nu}_{0}"] & E \\
& E^{(p)} / \{O,Q,2Q\} \arrow[ur, "\iota", dashed]
\end{tikzcd}
\]
Since $\iota \circ {\nu}_0 \circ F \circ \iota \circ {\nu}_0 \circ F=[-3p] $ and especially $[-3p](E[3])=\{O\}$, we find that $\iota \circ \nu_0$ sends $\{O,Q',2Q'\}$ to $T$. Thus the following relation holds.  
\[
\iota(\{O,(0,3\sqrt{3}(4+27y_1)),(0,-3\sqrt{3}(4+27y_1))\})=T=\{O,(0,y_0),(0,-y_0)\}.
\]
Then the isomorphism $\iota$ should be expressed by $x\mapsto u^2x+r, y \mapsto u^3y$ with $r=0$ and some $u \in \overline{\fp}$. Then we have 
\[
x^3+\frac{1}{u^2}\left(x-\frac{y_0}{u^2} \right)^2=x^3-27\left(x-(4+27y_1) \right)^2.
\]
Comparing coefficients of $x^2$ and $x$, we obtain
\[
u^2=\frac{-1}{27},\ \frac{y_0}{u^2}=4+27y_1
\]
and thus
\[
y_0+y_1=\frac{-4}{27}.
\]
Additionally, we find that $y_0={y_0}^{p^2}$, which means that $y_0$ and $y_1(={y_0}^p)$ belongs to $\fpp$. 

Now we are ready to show that $E$ is isomorphic to an elliptic curve $E_{\Lm^{\varepsilon}(\lm)}$ with some $\lm \in \fp$ and $\varepsilon\in\{\pm1\}$.



\begin{prop}
\label{lamdadekakeruyo}
    Let $E$ be an elliptic curve defined over $\overline{\fp}$ whose endomorphism ring containing $\bZ \left[ \frac{1+\sqrt{-3p}}{2}\right]$. Then it is isomorphic to an elliptic curve $E_{\Lm^{\varepsilon}(\lm)}$ with some $\lm \in \fp$ and $\varepsilon\in \{\pm1\}$.
\end{prop}

\begin{proof}
    From the above discussion, we assume that $E$ is defined by the equation $y^2=x^3+(x-y_0)^2$. In addition, we know that $y_0 \in \fpp$ and $y_0+{y_0}^p=-4/27$, so $y_0$ can be written as $-2/27+i$, where $i$ satisfies $i \in \fpp \smallsetminus {\fp}^{*}$ and ${i}^2 \in \fp$. We also know that $E_{\Lm^{\varepsilon}(\lm)}$ is isomorphic to an elliptic curve defined by the second equation in Lemma \ref{change_form3}, so $E$ is isomorphic to $E_{\Lm^{\varepsilon}(\lm)}$ with $\lm \in \overline{\fp}$ and $\varepsilon\in\{\pm1\}$ satisfying $\lm^2-\lm+1\ne 0$ and
    \begin{equation}
    \label{fp2part}
         i=\varepsilon\frac{(\lm+1)(\lm-2)(2\lm-1)\sqrt{{\lm}^2-\lm+1}}{27({\lm}^2-\lm+1)^2}.
    \end{equation}
   By squaring both sides of the equation, we obtain
    \[
    {i}^2= \frac{1}{{27}^2} \left( 4-27\frac{{\lm}^2(\lm-1)^2}{({\lm}^2-\lm+1)^3}\right),
    \]
    and therefore we find that
    \[
   2^8 \frac{({\lm}^2-\lm+1)^3}{{\lm}^2(\lm-1)^2}=2^8\frac{4-27^2{i}^2}{27}.
    \]
    The left-hand side is exactly the j-invariant of an elliptic curve $E_{\lm}:y^2=x(x-1)(x-\lm)$. We also find that $j(E_{\lm})$ is $\fp$-rational, so it is isomorphic to an elliptic curve defined by $y^2=x^3+ax+b$ with some $a,b \in \fp$. Depending on whether the polynomial $x^3+ax+b$ has no $\fp$-roots, a single $\fp$-root, or three $\fp$-roots, the $\overline{\fp}$-element $\lm$ can be ${\mathbb{F}}_{p^3}$-rational, of the form $\overline{\mu}/\mu$ with some $\mu \in \fpp\smallsetminus \fp$ where $\overline{\mu}$ is the $\fp$-conjugate of $\mu$, or $\fp$-rational, respectively. However, $\lm$ cannot be in $ {\mathbb{F}}_{p^3} \smallsetminus\fp$ by the next Lemma \ref{not-fp3rational}. If the third case, we get the claim of the proposition. Assume that $\lm= \overline{\mu}/\mu$ with some $\mu \in \fpp\smallsetminus \fp$. Substituting $\overline{\mu}/\mu$ into the equation (\ref{fp2part}), we get
    \[
    i=\varepsilon\frac{(\mu+\overline{\mu})(\overline{\mu}-2\mu)(2\overline{\mu}-\mu)\sqrt{{\mu }^2+{\overline{\mu} }^2-\mu\overline{\mu}}}{27({\mu }^2+{\overline{\mu} }^2-\mu\overline{\mu})^2}.
    \]
    Since $i \in \fpp \smallsetminus {\fp}^{*}$, the square root $\sqrt{{\mu }^2+{\overline{\mu} }^2-\mu\overline{\mu}}$ should be in $\fpp \smallsetminus {\fp}^{*}$. We consider where the map $\nu\circ F$ sends the $2$-torsion points. Here $\nu\circ F$ is the map introduced above.
   \[
\begin{tikzcd}
 E\arrow[r, "F "] & E^{(p)} \arrow[r, "\nu"] \arrow[d,"{\nu}_{0}"] & E \\
& E^{(p)} / \{O,Q,2Q\} \arrow[ur, "\iota", dashed]
\end{tikzcd}
\] 
We may assume that $\varepsilon=-1$. Then  the equations defining $E$ and $E^{(p)}$ are
\begin{eqnarray*}
    E&:&y^2=x^3+\left(x+\frac{2}{27}+\frac{(\mu+\overline{\mu})(\overline{\mu}-2\mu)(2\overline{\mu}-\mu)\sqrt{{\mu }^2+{\overline{\mu} }^2-\mu\overline{\mu}}}{27({\mu }^2+{\overline{\mu} }^2-\mu\overline{\mu})^2} \right)^2,\\
    E^{(p)}&:&y^2=x^3+\left(x+\frac{2}{27}-\frac{(\mu+\overline{\mu})(\overline{\mu}-2\mu)(2\overline{\mu}-\mu)\sqrt{{\mu }^2+{\overline{\mu} }^2-\mu\overline{\mu}}}{27({\mu }^2+{\overline{\mu} }^2-\mu\overline{\mu})^2} \right)^2.\\
\end{eqnarray*}
We note that the equation defining $E^{(p)}$ is obtained by substituting $\lm=\overline{\mu}/\mu$ and $\varepsilon=1$ for the second equation in Lemma \ref{change_form3}. Thus, the point
\[
S:=\left(-\frac{\lm+1-2\sqrt{\lm^2-\lm+1}}{3A^{-}(\lm)},0\right)
\]
is the $2$-torsion point of $E$ and the point \[
S':=\left(-\frac{\lm+1+2\sqrt{\lm^2-\lm+1}}{3A^{+}(\lm)},0\right)
\]
is the $2$-torsion point of $E^{(p)}$.
By substituting $\lm=\overline{\mu}/\mu$ into the $x$-coordinates of $S$ and $S'$, we have
\begin{eqnarray*}
    x(S)&=&\frac{\left(\mu+\overline{\mu}-2\sqrt{\mu^2+\overline{\mu}^2-\mu\overline{\mu}}\right)\left(2\overline{\mu}-\mu+2\sqrt{\mu^2+\overline{\mu}^2-\mu\overline{\mu}}\right)}{9(\mu^2+\overline{\mu}^2-\mu\overline{\mu})},\\
    x(S') &=&\frac{\left(\mu+\overline{\mu}+2\sqrt{\mu^2+\overline{\mu}^2-\mu\overline{\mu}}\right)\left(2\overline{\mu}-\mu-2\sqrt{\mu^2+\overline{\mu}^2-\mu\overline{\mu}}\right)}{9(\mu^2+\overline{\mu}^2-\mu\overline{\mu})}.
\end{eqnarray*}
In addition, the $x$-coordinate of $F(S)$ is obtained by changing the sign of $\sqrt{\mu^2+\overline{\mu}^2-\mu\overline{\mu}}$ and interchanging $\mu$ and $\overline{\mu}$:
\[
x(F(S))=\frac{\left(\mu+\overline{\mu}+2\sqrt{\mu^2+\overline{\mu}^2-\mu\overline{\mu}}\right)\left(2\mu-\overline{\mu}-2\sqrt{\mu^2+\overline{\mu}^2-\mu\overline{\mu}}\right)}{9(\mu^2+\overline{\mu}^2-\mu\overline{\mu})}.
\]
Since the map $\nu \circ F$ preserves all $2$-torsion points, we see that $\nu (F(S))=S$. However, a direct calculation (see \cite[6-1,6-2]{Github}) shows that $\nu(S')=S$. We know that $\#\text{Ker}(\nu)=3$, and therefore we see that $F(S)=S'$. We have $S'=F(S)$ if and only if 
\[
0=x(S')-x(F(S))=\frac{\left(\mu+\overline{\mu}+2\sqrt{\mu^2+\overline{\mu}^2-\mu\overline{\mu}}\right)\left(\overline{\mu}-\mu\right)}{3(\mu^2+\overline{\mu}^2-\mu\overline{\mu})}.
\]
However, since $\mu \ne \overline{\mu}$ we have $x(S')\ne x(F(S))$. This leads to a contradiction. 
Hence we obtain the claim. 
\end{proof}

\begin{lem}
\label{not-fp3rational}
   Let $p$ be a prime such that $p \equiv1 \bmod 4$. For an element $\lm \in {\mathbb{F}}_{p^3}$, suppose that $E_{\Lm^{-}(\lm)}$ and $E_{{\Lm^{+}(\lm)}}$ are supersingular. Then $\lm$ belongs to $\fp$.
\end{lem}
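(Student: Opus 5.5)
We first push the two Legendre parameters $\Lambda^{\pm}(\lambda)$ into $\mathbb{F}_{p^2}$ using supersingularity. Then we show that a suitable symmetric function of them is a rational function of $\lambda$ alone, so it lies in $\mathbb{F}_{p^2}\cap\mathbb{F}_{p^3}=\mathbb{F}_p$. Finally, a Galois-conjugate argument that uses $p\equiv 1 \bmod 4$ forces $\lambda\in\mathbb{F}_p$.

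\emph{Step 1.} Since $E_{\Lambda^{\varepsilon}(\lambda)}$ is supersingular and written in Legendre form $y^2=x(x-1)(x-\Lambda^{\varepsilon}(\lambda))$, its Legendre parameter $\Lambda^{\varepsilon}(\lambda)$ is a root of the Hasse polynomial. It is classical (Deuring; used by Auer and Top \cite{AT02}, cf.\ Lemma \ref{num_of_fpp_1}(b)) that all these roots lie in $\mathbb{F}_{p^2}$. Hence $\Lambda^{+}(\lambda),\Lambda^{-}(\lambda)\in\mathbb{F}_{p^2}$. This holds even though $\sqrt{\lambda^2-\lambda+1}$ a priori only lies in $\mathbb{F}_{p^6}$.

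\emph{Step 2.} Compute the product, which is invariant under changing the sign of the square root:
\[
\Lambda^{+}(\lambda)\Lambda^{-}(\lambda)=(1-\lambda)^2\bigl(\lambda^2-(\lambda^2-\lambda+1)\bigr)^2=(1-\lambda)^4 .
\]
By Step 1 this element lies in $\mathbb{F}_{p^2}$. Since $\lambda\in\mathbb{F}_{p^3}$, it also lies in $\mathbb{F}_{p^3}$, and therefore $(1-\lambda)^4=c\in\mathbb{F}_p^{*}$. It is nonzero because $\lambda\neq 1$. If one prefers, the sum $\Lambda^{+}+\Lambda^{-}=2(1-\lambda)(2\lambda^2-\lambda+1)$ is also in $\mathbb{F}_p$, but we do not expect to need it.

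\emph{Step 3.} Put $\mu=1-\lambda\in\mathbb{F}_{p^3}$ with $\mu^4=c\in\mathbb{F}_p$. The Frobenius conjugate $\mu^p$ is again a root of $x^4-c$, so $\mu^p=\zeta\mu$ for some $\zeta$ with $\zeta^4=1$. Because $p\equiv1\bmod4$, all fourth roots of unity lie in $\mathbb{F}_p$, so $\zeta^p=\zeta$. Iterating gives $\mu^{p^3}=\zeta^3\mu$. On the other hand $\mu^{p^3}=\mu$ since $\mu\in\mathbb{F}_{p^3}$, so $\zeta^3=1$. Combined with $\zeta^4=1$ this gives $\zeta=1$. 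Thus $\mu^p=\mu$, i.e.\ $\mu\in\mathbb{F}_p$, and so $\lambda\in\mathbb{F}_p$.

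The main obstacle is Step 1: we must justify that supersingular Legendre parameters lie in $\mathbb{F}_{p^2}$, and not merely that the $j$-invariant does. If the cited fact is not available in exactly this form, we would argue as follows. A supersingular curve with $j\in\mathbb{F}_{p^2}$ has a model over $\mathbb{F}_{p^2}$ whose $p^2$-Frobenius equals $[\pm p]$. This is the argument of Lemma \ref{num_of_fpp_1}(c), applied to a suitable twist. Then all $2$-torsion points are $\mathbb{F}_{p^2}$-rational, so the cross-ratio $\Lambda^{\varepsilon}(\lambda)$ of the four branch points lies in $\mathbb{F}_{p^2}$.
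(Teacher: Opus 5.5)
Your proposal is correct and follows the paper's proof. Auer--Top puts $\Lambda^{\pm}(\lambda)$ in $\mathbb{F}_{p^2}$, so the product $(1-\lambda)^4$ lies in $\mathbb{F}_{p^2}\cap\mathbb{F}_{p^3}=\mathbb{F}_p$, and $\mu_4\subset\mathbb{F}_p$ then rules out $[\mathbb{F}_p(1-\lambda):\mathbb{F}_p]=3$. Your Step 3 is an explicit Frobenius version of the paper's Kummer-theory step, where the paper uses that this degree equals $|R/(\mathbb{F}_p^*)^4|$ and hence divides $4$; you also rightly skip the paper's unnecessary side remark that $\Lambda^{\pm}(\lambda)\notin\mathbb{F}_p$.
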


\begin{proof}
Assume that $\lm \in {\mathbb{F}}_{p^3} \smallsetminus\fp$. Our aim is to derive a contradiction. Since $E_{\Lm^{-}(\lm)}$ and $E_{\Lm^{+}(\lm)}$ are supersingular, by Auer and Top \cite[Proposition 2.2]{AT02}, we find that $\Lm^{-}(\lm)$ and ${\Lm^{+}(\lm)}$ are in $\fpp$. In addition, by Lemma \ref{num_of_fpp_1} we know that $\Lm^{-}(\lm)$ and ${\Lm^{+}(\lm)}$ are not in $\fp$.  Then we have $\Lm^{-}(\lm) \cdot {\Lm^{+}(\lm)} =(\lm-1)^4 \in \fpp$ and $(\lm-1)^4 \in {\mathbb{F}}_{p^3}$. Thus, $(\lm-1)^4 $ should be in $\fp$. Let $R$ be the subgroup of $\fp^*$ generated by $\left(\fp^*\right)^4$ and $(1-\lm)^4$.  In addition, we denote by $\fp(\sqrt[4]{R})$ the field generated by $\fp$ and $\{a \mid a^4\in R\}$.  From the assumption, we have ${\mathbb{F}}_{p^3}=\fp(\lm-1)$.  Moreover, we have $\fp(\sqrt[4]{R})=\fp(\lm-1)={\mathbb{F}}_{p^3}$. However, since $\mu_4\subset\fp$ for $p \equiv1 \bmod 4$, the Kummer theory tells us that $[\fp(\lm-1):\fp]=\left|R/\left(\fp^*\right)^4\right|\ne 3$, which implies a contradiction.  
\end{proof}

\section{\textcolor{black}{The} $j$-invariants of supersingular $E_{\Lm^{-}(\lm)}$ and $E_{\Lm^{+}(\lm)}$ }
\label{the-important}
In this section, we observe the $j$-invariants of supersingular elliptic curves $E_{\Lm^{-}(\lm)}$ and $E_{\Lm^{+}(\lm)}$ with $\lm \in \fp$. In particular, we consider the relation between the $j$-invariants and the Hilbert class polynomials.

We firstly observe the case $p \equiv 1 \bmod 4$.
\begin{thm}
\label{important}
    Let $p\ge 5$ be a prime and $P_{3p}(X)$ the Hilbert class polynomial of level $3p$. Then $\overline{x_0} \in \overline{\fp}$ is the root of $P_{3p}(X) \bmod p$ if and only if an elliptic curve with $j$-invariant $\overline{x_0}$ is supersingular and can be defined by the following equation with some $\lm\in\fp$ and $\varepsilon \in \{\pm1\}$:
    \[
    y^2=x(x-1)(x-(1-\lm)(\lm+\varepsilon\sqrt{\lm^2-\lm+1})^2)
    \] 
\end{thm}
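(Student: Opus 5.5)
Although the statement is written for general $p\ge 5$, the section places it in the case $p\equiv 1 \bmod 4$, and I will assume this throughout. This is the setting in which Proposition \ref{end1} and Proposition \ref{lamdadekakeruyo} are available. The plan is to prove the two implications separately. Each direction reduces, via the Deuring lifting theorem and reduction of CM curves, to the statement that an elliptic curve $E$ over $\overline{\fp}$ has $\bZ\left[\frac{1+\sqrt{-3p}}{2}\right]\subseteq \mathrm{End}(E)$ if and only if $j(E)$ is a root of $P_{3p}(X)\bmod p$. Note that $O_{3p}=\bZ\left[\frac{1+\sqrt{-3p}}{2}\right]$ is the maximal order of $\bQ(\sqrt{-3p})$, since $-3p\equiv 1\bmod 4$ when $p\equiv 1\bmod 4$.

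($\Leftarrow$) Suppose $E=E_{\Lm^{\varepsilon}(\lm)}$ with $\lm\in\fp$ is supersingular. By Proposition \ref{arigataya} and \cite[Prop.\ 1.10]{IKO02}, both $E_{\Lm^{\pm}(\lm)}$ are supersingular, so Proposition \ref{end1} gives an embedding $O_{3p}\hookrightarrow \mathrm{End}(E)$. Let $\alpha_0$ be the endomorphism corresponding to $\frac{1+\sqrt{-3p}}{2}$. Theorem \ref{deuring} lifts the pair $(E,\alpha_0)$ to $(A,\alpha)$ over a number field. Then $\mathrm{End}(A)$ is an order in $\bQ(\sqrt{-3p})$ containing $\bZ[\alpha]=O_{3p}$, hence equals $O_{3p}$ by maximality. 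Therefore $j(A)$ is a root of $P_{3p}(X)$, and reducing modulo the chosen prime $\mathfrak B$ shows that $\overline{x_0}=j(E)$ is a root of $P_{3p}(X)\bmod p$.

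($\Rightarrow$) Let $\overline{x_0}$ be a root of $P_{3p}(X)\bmod p$. Since $P_{3p}$ is monic with integer coefficients, it splits over the ring of integers of the ring class field, so $\overline{x_0}$ is the reduction of some root $x_0$ of $P_{3p}(X)$ at a prime above $p$. The curve $A$ with $j(A)=x_0$ has CM by $O_{3p}$, and after a finite extension it has good reduction at that prime. Reduction gives an injection $\mathrm{End}(A)\hookrightarrow \mathrm{End}(\bar A)$, so $\bar A$ has endomorphism ring containing $O_{3p}$. Since $p$ ramifies in $\bQ(\sqrt{-3p})$, Lemma \ref{nonquadraticp}, or equivalently the supersingularity proposition proved in Section \ref{with Endomorphism containing}, shows that $\bar A$ is supersingular. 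Proposition \ref{lamdadekakeruyo} then shows that $\bar A\cong E_{\Lm^{\varepsilon}(\lm)}$ for some $\lm\in\fp$ and $\varepsilon\in\{\pm1\}$, which gives the required model with $j$-invariant $\overline{x_0}$.

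I expect the main delicate point to be the direction-independence of the lifting argument, namely making sure the lifted endomorphism ring is exactly $O_{3p}$ and not a smaller order. This is handled by the maximality of $O_{3p}$ for $p\equiv 1\bmod 4$, together with the integrality of $\frac{1+\sqrt{-3p}}{2}$ coming from Proposition \ref{end1}. The substantive geometric input, that any such curve has a model $E_{\Lm^{\varepsilon}(\lm)}$ with $\lm$ rational over $\fp$, is already supplied by Proposition \ref{lamdadekakeruyo} and Lemma \ref{not-fp3rational}.
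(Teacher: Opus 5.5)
Your proof is correct and follows the paper's argument: Proposition~\ref{end1} plus the Deuring lifting theorem gives one direction, and reducing a CM lift and then applying Proposition~\ref{lamdadekakeruyo} gives the other. You also make explicit two points the paper leaves implicit: the lifted endomorphism ring is exactly $O_{3p}$ because $-3p$ is a fundamental discriminant when $p\equiv 1 \bmod 4$, and the reduced curve is supersingular because $p$ ramifies in $\bQ(\sqrt{-3p})$.
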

\begin{proof}
Assume that the elliptic curves $E_{\Lm^{\varepsilon}(\lm)}$ for $\varepsilon=\pm1$ with $\lm \in \fp$ are supersingular. Then Proposition \ref{end1} says that $E_{\Lm^{\varepsilon}(\lm)}$ has an endomorphism $(1+\sqrt{-3p})/2$. By Deuring's lifting theorem, there exists a $\alpha\in \bC$ and a prime ideal $\mathfrak{B}_0$ such that $P_{3p}(\alpha)=0$ and $\alpha \equiv j(E_{\Lm(\lm)}) \bmod \mathfrak{B}_0$. Hence $P_{3p}(j(E_{\Lm^{\varepsilon}(\lm)}) ) \equiv0 \bmod p$. Conversely, assume that $\overline{x_0} \in \overline{\fp}$ is the root of $P_{3p}(X) \bmod p$.  Let $K$ be a splitting field of $P_{3p}(X)$ over $\bC$. Then there exists a root $x_0$ of $P_{3p}(X)$ and a prime ideal $\mathfrak{B}$ lying above $p$ such that $\overline{x_0} \equiv x_0\bmod \mathfrak{B}$. The root $x_0$ is a $j$-invariant of an elliptic curve with endomorphism $(1+\sqrt{-3p})/2$, therefore the elliptic curve with $j$-invariant $\overline{x_0}$ also has an endomorphism $(1+\sqrt{-3p})/2$. Hence Proposition \ref{lamdadekakeruyo} tells us that there is an element $\lm \in \fp$ and $\varepsilon\in\{\pm1\}$ such that $\overline{x_0}=j(E_{\Lm^{\varepsilon}(\lm)})$. 
\end{proof}

Secondly, we observe the case $p \equiv 11 \bmod 12$

\begin{thm}
\label{important2}
    Let $p\ge 5$ be a prime such that $p \equiv 11 \bmod 12$ and $P_{p}(X)$ the Hilbert class polynomial of level $p$. Then $\overline{x_0} \in \overline{\fp}$ is the root of $P_{p}(X) \bmod p$ if and only if an elliptic curve with $j$-invariant $\overline{x_0}$ is supersingular and can be defined by the following equation with some $\lm\in\fp$ and $\varepsilon \in \{\pm1\}$:
    \[
    y^2=x(x-1)(x-(1-\lm)(\lm+\varepsilon\sqrt{\lm^2-\lm+1})^2)
    \] 
\end{thm}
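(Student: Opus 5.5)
The plan is to mirror the proof of Theorem \ref{important}, replacing $\sqrt{-3p}$ by $\sqrt{-p}$ and the construction of Section \ref{with Endomorphism containing} by a direct argument for curves over $\fp$.

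\textbf{Forward direction.} Suppose $E_{\Lm^{\varepsilon}(\lm)}$ is supersingular with $\lm\in\fp$.
\begin{enumerate}
\item By Proposition \ref{pmod43fpcase}, $\sqrt{\lm^2-\lm+1}\in\fp$, so $E_{\Lm^{\varepsilon}(\lm)}$ is defined over $\fp$ with all $2$-torsion $\fp$-rational.
\item By the corollary following Corollary \ref{peq7mod12}, $\bZ[(1+\sqrt{-p})/2]\subset \mathrm{End}(E_{\Lm^{\varepsilon}(\lm)})$.
\item Deuring's lifting theorem (Theorem \ref{deuring}) applied to the endomorphism $(1+\sqrt{-p})/2$ gives a CM curve in characteristic zero whose endomorphism ring contains this element. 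Since $-p$ is a fundamental discriminant ($p\equiv 3\bmod 4$), $O_p$ is the maximal order, so that ring equals $O_p$ and its $j$-invariant is a root of $P_p(X)$.
\item Reducing modulo a prime above $p$ gives $P_p(j(E_{\Lm^{\varepsilon}(\lm)}))\equiv 0 \bmod p$.
\end{enumerate}

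\textbf{Converse.} Let $\overline{x_0}$ be a root of $P_p(X)\bmod p$. As in Theorem \ref{important}, lift it to a root $x_0$ of $P_p$ and reduce a curve with CM by $O_p$. Since $p$ ramifies in $\bQ(\sqrt{-p})$, Lemma \ref{nonquadraticp} shows the reduction $E$ is supersingular, and $E$ has an endomorphism $\phi=\sqrt{-p}$ of degree $p$.

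The task is then to show $E\cong E_{\Lm^{\varepsilon}(\lm)}$ for some $\lm\in\fp$. I expect this to be the main obstacle; I would try the following steps first.
\begin{enumerate}
\item Since $\phi$ is purely inseparable of degree $p$, it equals the $p$-th power Frobenius followed by an isomorphism $E^{(p)}\to E$. Hence $j(E)^p=j(E)$, so $j(E)\in\fp$ (this also matches Proposition \ref{fac_P_p}).
\item Choose a model of $E$ over $\fp$ on which Frobenius $\pi$ satisfies $\pi^2=-p$. This is possible because $\phi$ descends, i.e. we pick the twist where the isomorphism is the identity, with care at $j=1728$.
\item Because $(1+\pi)/2\in\mathrm{End}(E)$, $\pi$ acts trivially on $E[2]$, so all $2$-torsion is $\fp$-rational. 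Thus $E\cong y^2=x(x-1)(x-\Lm)$ with $\Lm\in\fp$, i.e. a Legendre form defined over $\fp$.
\item Since $p\equiv 2\bmod 3$, every element of $\fp$ is a cube and $E(\fp)$ has order $p+1$, divisible by $3$. So $E$ has an $\fp$-rational $3$-torsion point, and $\fp$-rational cyclic $3$-subgroups exist. Whether a given subgroup is stable under $\pi$ needs the same kind of check, hence the next step.
\item Invert the parametrization: solve $\Lm=(1-\lm)(\lm+\varepsilon s)^2$ with $s^2=\lm^2-\lm+1$ and $\lm,s\in\fp$. I would rationally parametrize the conic $s^2=\lm^2-\lm+1$ by $\lm=(t^2-1)/(2t-1)$, $s=(t^2-t+1)/(2t-1)$ (derived from the point $(0,1)$). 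Then $\Lm^{+}$ becomes a rational function of $t\in\fp$, of degree $3$ in $t$ up to the twelve Legendre-form values $\Lm, 1/\Lm,\dots$ having the same $j$.
\item The existence of $t\in\fp$ should correspond exactly to an $\fp$-rational $3$-torsion point with prescribed $x$-coordinate $(\lm+1+2\varepsilon s)/3$, as in Lemma \ref{x-three}. So I would show that a suitable $\fp$-rational $3$-torsion point of $E$ (which exists since $3\mid p+1$), expressed in Legendre coordinates, yields such a $t$ by reading off its $x$-coordinate.
\item Finally, verify $\lm\neq 0,1$ and $\lm^2-\lm+1\ne0$. This should follow from nonsingularity and supersingularity, since the degenerate $j$-values $0$ and $1728$ can be handled directly using $p\equiv 11\bmod 12$.
\end{enumerate}
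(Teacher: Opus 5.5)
You follow the paper's route. The forward direction goes through Proposition \ref{pmod43fpcase}, rationality of $E[2]$ and Deuring lifting, and you correctly note that $O_p$ is maximal. The converse goes through a Legendre model $y^2=x(x-1)(x-\Lm)$ with $\Lm\in\fp$, an $\fp$-rational $3$-torsion point coming from $3\mid p+1$, and an inversion of Lemma \ref{x-three}. The paper performs that inversion with explicit polynomials $f,g,h$ in $(a,b^2)$ satisfying (\ref{eq1})--(\ref{eq3}). It sets $\lm=f/g$ and $\sqrt{\lm^2-\lm+1}=h/g$, and excludes $g=0$ via $\sqrt{-3}\notin\fp$.

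\textbf{The gap is step 6.} All the content of the converse sits there, and you only assert it ("I would show"), so as written the converse is not proved. Moreover, it is not a mere reading-off of the abscissa. There are also some minor slips:
\begin{itemize}
\item In step 5, $\Lm^{+}$ has degree $4$ in $t$, not $3$.
\item A $j$-invariant has six Legendre values, not twelve.
\item In step 2 you must also treat $j=0$, where $\Aut(E)=\mu_6$. For $p\equiv2\bmod3$, no $\iota\pi$ with $\iota\in\Aut(E)$ acts trivially on $E[2]$, so $j=0$ is simply not a root of $P_p\bmod p$.
\end{itemize}

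\textbf{What step 6 actually requires.} Your parametrization is just $t=\lm+s$, and it gives $1-\lm=t(2-t)/(2t-1)$. Hence $\Lm^{+}=t^3(2-t)/(2t-1)$, while the abscissa in Lemma \ref{x-three} is $(\lm+1+2s)/3=t^2/(2t-1)$. So from the abscissa $a\in\fp$ of your $3$-torsion point, $t$ must be a root of $t^2-2at+a$. You therefore need two facts:
\begin{itemize}
\item[(i)] this root lies in $\fp$, i.e.\ $a(a-1)$ is a square;
\item[(ii)] it returns your $\Lm$, rather than the other root of $\psi_3(a)=0$ viewed as a quadratic in $\Lm$.
\end{itemize}

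\textbf{How to close it.} Let $t_1,t_2$ be the roots of $t^2-2at+a$. By Vieta, the two roots of that quadratic in $\Lm$ are exactly $\Lm_i=at_i(2-t_i)$, and $\Lm_1-\Lm_2=-2a(a-1)(t_1-t_2)$. Both $\Lm$ and $\Lm_1+\Lm_2=6a^2-4a^3$ lie in $\fp$. Also $a\neq0,1$, because $\psi_3(0)=-\Lm^2$ and $\psi_3(1)=-(\Lm-1)^2$. Together with $t_1+t_2=2a$, this forces $t_1,t_2\in\fp$, and one of them gives $\Lm^{+}(\lm)=\Lm$. The remaining conditions are automatic:
\begin{itemize}
\item $2t-1\neq0$, since $t=1/2$ gives $t^2-2at+a=1/4$;
\item $s\neq0$, because $t^2-t+1$ has no root in $\fp$ when $p\equiv2\bmod3$;
\item $\lm\neq0,1$, because $t=\pm1$ gives $\Lm=1$ and $t\in\{0,2\}$ gives $\Lm=0$.
\end{itemize}
Only $a\in\fp$ is used, so your concern in step 4 about $\pi$-stable subgroups is unnecessary. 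Completed this way, your inversion is a genuinely lighter substitute for the paper's computer-verified degree-$9$ identities (\ref{eq1})--(\ref{eq3}).
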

\begin{proof}
    By Proposition \ref{pmod43fpcase}, for $\varepsilon=\pm1$ if $E_{\Lm^{\varepsilon}(\lm)}$ with $\lm \in \fp$ is supersingular, then $\Lm^{\varepsilon}(\lm)\in \fp$. Thus, the elliptic curve is defined over $\fp$ and the $p$ th-power Frobenius map $F$ acts trivially on all $2$-torsion points of $E_{\Lm^{\varepsilon}(\lm)}$. Hence its endomorphism ring contains $\bZ\left [\frac{1+\sqrt{-p}}{2}  \right]$ and thus the $j$-invariant of this elliptic curve is the root of $P_p(X) \bmod p$.

    Let $\overline{x_0}$ be the root of $P_p(X) \bmod p$ and $E$ an elliptic curve with $j$-invariant $\overline{x_0}$. The endomorphism ring of $E$ contains $\bZ\left [\frac{1+\sqrt{-p}}{2}  \right]$ so it is supersingular and defined over $\fp$. Moreover, we may assume that it is given by the equation
    \[
    y^2=x(x-1)(x-t)
    \]
    with some $t \in \fp$. Our aim is to find a $\lm$ and $\varepsilon\in\{\pm1\}$ such that $\Lm^{\varepsilon}(\lm)=t$. From the assumption that $p \equiv 11\bmod 12$, we write $p=12m+11$ with some integer $m$. Since $E$ is supersingular elliptic curve defined over $\fp$, we have $\#E(\fp)=p+1=12(m+1).$ Thus, as an abstract group, $E(\fp)$ has a subgroup of order $3$. That is, $E$ has at least one $\fp$-rational point of order $3$, say $P=(a,b)$ with $a,b \in \fp$. We note that $a$ and $b$ satisfies two equations
\begin{eqnarray}
     b^2 &=& a(a-1)(a-t), \label{re1}\\
     3a^4-4(1+t)a^3+6ta^2-t^2 &=& 0.\label{re2}
\end{eqnarray}
 Let 
\begin{eqnarray*}
    f(a,b^2)&:=&- 3a^9 + 14a^8 - 34a^7 +
    50a^6 - 42a^5 + 21a^4b^2 + 18a^4 \\
    &&- 32a^3b^2 - 3a^3  + 11a^2b^2 +
    2ab^2 - b^2,\\
    g(a,b^2)&:=&a^8 - 4a^7 + 2a^6 + 8a^5 - 12a^4 + 20a^3b^2 + 6a^3 -
    30a^2b^2 - a^2 \\
    &&+ 14ab^2 - 2b^2,\\
    h(a,b^2)&:=&-3a^9 + \frac{27}{2}a^8 - 22a^7 + 14a^6 + a^5 - \frac{39}{2}a^4b^2 - 6a^4\\
     &&+ 39a^3b^2  +
    3a^3 - \frac{61}{2}a^2b^2 - \frac{1}{2}a^2 + 11ab^2 - \frac{3}{2}b^2.
 \end{eqnarray*}
Using the equations (\ref{re1}) and (\ref{re2}), a direct calculation shows relations (see \cite[7]{Github})
    \begin{eqnarray}
        f^2-fg+g^2 &=& h^2 \label{eq1}\\
        f+g-2h &=& 3ag \label{eq2}\\
        (g-f)(f-h)^2 &=& tg^3.\label{eq3}
    \end{eqnarray}
    If $g(a,b^2)$ is zero, it is straightforward from these equations that $f(a,b^2)=h(a,b^2)=0$. Then it is a calculation to check that  
    \begin{eqnarray*}
      && (-20a^3+
    30a^2- 14a+2)f(a,b^2)+( 21a^4- 32a^3+ 11a^2 +
    2a-1)g(a,b^2)\\
    &&=81a^2(a-1)^2\left(a^2 - a + \frac{1}{3} \right)^4=0.
    \end{eqnarray*}
    From equation (\ref{re2}) we have $a \ne 0,1$ and therefore we have $a^2 - a + 1/3=0$, which contradicts the assumption that $a$ is $\fp$-rational. Thus $g(a,b^2)\ne 0$.
    Now we set $\lm:=f(a,b^2)/g(a,b^2)$. From equation (\ref{eq1}),
    we take $h(a,b)/g(a,b)$ to be a square root of ${\lm^2-\lm+1}$.  Then from equation (\ref{eq3}), we find that
    \[
    (1-\lm)(\lm-\sqrt{\lm^2-\lm+1})^2=t.
    \]
    This completes the proof.
\end{proof}
The choice of $\lm$ for a fixed $t$ in the proof is not unique. The next lemma tells us how many $\lm \in \fp$ satisfies $\Lm^{\varepsilon}(\lm)=t$ with $\varepsilon\in\{\pm1\}$.
\begin{lem}
\label{howmanylambda}
Let $p$ be a prime such that $p \equiv 11 \bmod 12$ and
let $E_t:y^2=x(x-1)(x-t)$ with $t \in \fp$ an elliptic curve. Then there is a one-to-one correspondence between the two sets
\begin{align*}
\left\{
 \begin{array}{cc}
       \lm \in \fp
 \end{array}
\middle|  
 \begin{array}{ccc}
       {\lm}^2-\lm+1 \in \left({\fp}^{*}\right)^2 ,\\
       \textrm{there exists an }\varepsilon\in \{\pm1\} \textrm{ such that }\\
      (1-\lm)(\lm+\varepsilon\sqrt{\lm^2-\lm+1})^2=t
 \end{array}
\right\}
\end{align*}
and 
\begin{align*}
    \{a \in \fp \mid  a \textrm{ is the } x\textrm{-coordinate of a point of } E_{t}[3]\}.
\end{align*}

\end{lem}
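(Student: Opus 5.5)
We build maps in both directions and check that they are mutually inverse. The forward map is the one already implicit in the proof of Theorem \ref{important2}; the backward map comes from Lemma \ref{x-three} and Remark \ref{isogenybetE}.

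\emph{From $\lambda$ to $a$.} Let $\lambda$ lie in the first set and let $\varepsilon$ be the sign with $\Lambda^{\varepsilon}(\lambda)=t$. Then $E_t$ is literally the curve $E_{\Lambda^{\varepsilon}(\lambda)}$. Lemma \ref{x-three} shows that
\[
a(\lambda):=\frac{\lambda+1+2\varepsilon\sqrt{\lambda^2-\lambda+1}}{3}
\]
is the $x$-coordinate of a $3$-torsion point. Since $\sqrt{\lambda^2-\lambda+1}\in\mathbb{F}_p$, we get $a(\lambda)\in\mathbb{F}_p$.

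We must check that this is well defined. The only ambiguity is the sign of the chosen square root, since $\varepsilon$ and $\sqrt{\lambda^2-\lambda+1}$ enter only through their product. If both $\Lambda^{+}(\lambda)$ and $\Lambda^{-}(\lambda)$ equal $t$, we have two candidate values of $a$. Expanding,
\[
\Lambda^{+}(\lambda)-\Lambda^{-}(\lambda)=4(1-\lambda)\lambda\sqrt{\lambda^2-\lambda+1}.
\]
This is nonzero because $\lambda\neq0,1$ and $\lambda^2-\lambda+1\neq0$ (nonsingularity of $E_t$). So exactly one value of $\varepsilon\sqrt{\lambda^2-\lambda+1}$ works, and $a(\lambda)$ is well defined.

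\emph{From $a$ to $\lambda$.} Let $a\in\mathbb{F}_p$ be the $x$-coordinate of a point of $E_t[3]$. Pick $b$ with $(a,b)\in E_t[3]$. Only $b^2=a(a-1)(a-t)$ enters $f,g,h$, so $b$ need not be $\mathbb{F}_p$-rational. The identities \eqref{eq1}--\eqref{eq3} are polynomial consequences of \eqref{re1} and \eqref{re2}, so they hold for any such $a$. The argument in the proof of Theorem \ref{important2} then gives $g\neq0$; here $p\equiv 2\bmod 3$ ensures $a^2-a+1/3$ has no root in $\mathbb{F}_p$. Set
\[
\lambda(a):=\frac{f}{g},\qquad \sqrt{\lambda^2-\lambda+1}:=\frac{h}{g}.
\]
By \eqref{eq1} this is a genuine square root, and it lies in $\mathbb{F}_p$. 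By \eqref{eq3}, $\Lambda^{-}(\lambda)=t$. We also need $\lambda^2-\lambda+1\neq0$, i.e.\ $h\neq0$. If $h=0$, then \eqref{eq3} gives $t=(g-f)f^2/g^3=0$, contradicting nonsingularity of $E_t$. So $\lambda(a)$ lies in the first set.

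\emph{Mutual inverses.}
\begin{enumerate}
\item[(i)] $a(\lambda(a))=a$. With $\varepsilon=-1$ and the square root $h/g$, we compute $a(\lambda(a))=(f/g+1-2h/g)/3$. By \eqref{eq2} this equals $(3ag)/(3g)=a$.
\item[(ii)] $\lambda(a(\lambda))=\lambda$. This says that $f/g$, evaluated at $a=(\lambda+1+2s)/3$ with $s=\varepsilon\sqrt{\lambda^2-\lambda+1}$ and $b^2=a(a-1)(a-t)$, $t=(1-\lambda)(\lambda+s)^2$, returns $\lambda$. This is a rational-function identity in $\lambda$ and $s$ subject to $s^2=\lambda^2-\lambda+1$, to be checked by direct computation as the paper does elsewhere via \cite{Github}.
\end{enumerate}

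The main obstacle is step (ii). A cheaper route is to prove injectivity of $\lambda\mapsto a(\lambda)$ directly. From $a$ we recover $\lambda+2s=3a-1$. Combining this with $s^2=\lambda^2-\lambda+1$ gives
\[
(3a-1-\lambda)^2=4(\lambda^2-\lambda+1),
\]
a quadratic in $\lambda$. So each $a$ has at most two preimages, and the condition $t=(1-\lambda)(\lambda+s)^2$ must then be used to cut this down to one. Combined with (i), which gives surjectivity, this yields the bijection. If the degree count is not clean, I would fall back on the explicit symbolic verification of (ii).
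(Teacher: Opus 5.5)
Your construction is the paper's own proof. You use the same forward map $\lambda\mapsto a(\lambda)$, with the sign fixed by $\Lambda^{+}(\lambda)\neq\Lambda^{-}(\lambda)$, and the same inverse $a\mapsto f/g$ with $h/g$ as the square root. You get $a(\lambda(a))=a$ from \eqref{eq2}, as the paper does. The paper settles your step (ii) by exactly the symbolic identity $f(a_\lambda,b_\lambda^2)/g(a_\lambda,b_\lambda^2)=\lambda$ that you defer to a direct computation.

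One step is wrong as written: the check that $h\neq 0$. If $h=0$, then \eqref{eq1} forces $\lambda^2-\lambda+1=0$, so $\lambda\neq 0,1$. Then \eqref{eq3} gives $t=(1-\lambda)\lambda^2=\lambda$, not $0$, so nonsingularity of $E_t$ is not contradicted. The correct reason is that $\lambda=f/g\in\fp$, while $x^2-x+1$ has no root in $\fp$ because $-3$ is a non-square for $p\equiv 2 \bmod 3$. The paper uses the same fact to get $\Lambda^{-}(\lambda)\neq\Lambda^{+}(\lambda)$.

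Your fallback for (ii) does close, and it is worth writing down because it replaces the computer check. Put $c=3a-1$, and suppose $\lambda_1\neq\lambda_2$ in the first set have the same image $a$. Then they are the two roots of $3\lambda^2+(2c-4)\lambda+(4-c^2)$, and both satisfy $(1-\lambda)(c+\lambda)^2=4t$. Subtract the two cubic relations and divide by $\lambda_1-\lambda_2$. Using $\lambda_1+\lambda_2=(4-2c)/3$ and $\lambda_1\lambda_2=(4-c^2)/3$, you get
\[
0=-(\lambda_1^2+\lambda_1\lambda_2+\lambda_2^2)+(1-2c)(\lambda_1+\lambda_2)+2c-c^2=-\tfrac{4}{9}(c-2)(c+1)=-4a(a-1).
\]
This is impossible, because the only points of $E_t$ with $x\in\{0,1\}$ are the $2$-torsion points $(0,0)$ and $(1,0)$. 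So $\lambda\mapsto a(\lambda)$ is injective. Together with (i), which gives surjectivity, it is a bijection with inverse $a\mapsto\lambda(a)$. Step (ii) then needs no separate verification.
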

\begin{proof}
    We shall show the map sending $ \lm$ in the former set to
    \[\Phi(\lm)=\frac{1+\lm+2\varepsilon\sqrt{\lm^2-\lm+1}}{3}\in \fp\] is the bijection. We note that $\Lm^{-}(\lm) \ne \Lm^{+}(\lm)$ because $\lm^2-\lm+1\ne 0$ with $\lm\in \fp$. Thus $\varepsilon$ is uniquely determined respect to each $\lm$ in the former set. In addition, since $\lm^2-\lm+1\in\left(\fp^*\right)^2$, the image $\Phi(\lm)$ lies in $\fp$.
    From Lemma \ref{x-three}, the image $\Phi(\lm)=(1+\lm+2\varepsilon\sqrt{\lm^2-\lm+1})/3$ is the $x$-coordinate of a $3$-torsion point of $E_{\Lm^{\varepsilon}(\lm)}$, therefore the map is well-defined. Furthermore, let $\Psi$ be a map sending $a$ in the latter set to $\Psi(a)=f(a,b^2)/g(a,b^2)$, where $b^2:=a(a-1)(a-t)$ and $f(a,b^2),g(a,b^2)$ are as in Theorem \ref{important2}. Taking $h(a,b^2)/g(a,b^2)$ to be a square root of $\Psi(a)^2-\Psi(a)+1$ and $\varepsilon=-1$, by Theorem \ref{important2} we see that this map is well-defined. 
    Now we show that the two maps are inverse to each other. Firstly, from the equation (\ref{eq2}) in the proof of Theorem \ref{important2}, we have 
    \[
   \frac{1}{3} \left(\frac{f(a,b^2)}{g(a,b^2)}+1-2\frac{h(a,b^2)}{g(a,b^2)} \right)=a.
    \]
    This implies that $\Phi\circ \Psi=\text{id}$. Secondly, we observe $\Psi\circ\Phi$. Choose $\lm$ in the former set. A direct calculation (see \cite[8]{Github}) shows that $f(a_\lm,b^2_{\lm})/g(a_\lm,b^2_{\lm})=\lm$  where we write 
    \[
    a_\lm=\frac{\lm+1+2\varepsilon\sqrt{\lm^2-\lm+1}}{3}, \ \ {b_\lm}^2=A^{\varepsilon}(\lm)B^{\varepsilon}(\lm)^2
    \]
    with $A^{\varepsilon}(\lm),B^{\varepsilon}(\lm)$ in Lemma \ref{change_form3}. Thus we find that $\Psi\circ \Phi=\text{id}$. Now we obtain the desired result. 
\end{proof}
If the elliptic curve $E_t$ is supersingular, the cardinality of the latter set in Lemma \ref{howmanylambda} is determined.
\begin{lem}
\label{exactly2}
    Let $p$ be a prime such that $p \equiv 11 \bmod 12$ and
let $E_t:y^2=x(x-1)(x-t)$ with $t \in \fp$ a supersingular elliptic curve. Then
\[
\# \{a \in \fp \mid  a \textrm{ is the } x\textrm{-coordinate of point of } E_{t}[3]\}=2.
\]
\end{lem}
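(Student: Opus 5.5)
The plan is to describe $E_t[3]$ as a Galois module under the $p$-th power Frobenius, and then count which $x$-coordinates of $3$-torsion points are fixed.

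Since $E_t$ is supersingular and defined over $\fp$ with $p\ge5$, its trace of Frobenius is $0$. So the $p$-th power Frobenius $F$ satisfies $F^2=[-p]$ in $\mathrm{End}(E_t)$. Because $p\equiv 11\bmod 12$, we have $p\equiv 2\bmod 3$, so $-p\equiv 1 \bmod 3$. Hence $F$ acts on $E_t[3]\cong(\bZ/3\bZ)^2$ as an automorphism $\sigma$ with $\sigma^2=1$ and trace $\mathrm{tr}(F)=0\bmod 3$. As in Corollary \ref{peq7mod12}, $\#E_t(\fp)=p+1\equiv 0\bmod 3$, so $\sigma$ has a nontrivial fixed vector. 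Therefore $\sigma\neq -1$ (which has no nonzero fixed vectors). Also $\sigma\neq 1$, since $\mathrm{tr}(1)=2\not\equiv 0$. Alternatively, $E_t[3]\subset E_t(\fp)$ would force $9\mid p+1$ together with the Weil pairing condition $\mu_3\subset\fp$, which is false because $p\equiv 2\bmod 3$. So $\sigma$ is diagonalizable with eigenvalues $1$ and $-1$: there are lines $L_+$ (fixed pointwise) and $L_-$ (on which $F=-1$).

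Next I translate this into $x$-coordinates. The $x$-coordinates of nonzero points of $E_t[3]$ are the four roots of the division polynomial $\psi_3$ of Lemma \ref{x-three}, one for each of the four lines $\{\pm Q\}$. Such an $x$-coordinate lies in $\fp$ exactly when $F$ preserves the corresponding line $\{O,Q,-Q\}$, i.e.\ when $F(Q)=\pm Q$. Since $\sigma$ has eigenvalues $1,-1$ and is not scalar, exactly two lines are stable, namely $L_+$ and $L_-$. The other two lines are swapped: if $Q=u+v$ with $u\in L_+$, $v\in L_-$ both nonzero, then $F(Q)=u-v$, which lies neither in $\langle Q\rangle$ nor in any stable line. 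Hence exactly two roots of $\psi_3$ lie in $\fp$. This requires the four roots to be distinct, which holds because $\psi_3$ is separable for $p\neq 3$.

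The only point needing care is excluding $\sigma=\pm1$, and I would rely on the trace congruence argument for this. I would also confirm that the set in the statement counts distinct values $a$, which matches the count of the two stable lines.
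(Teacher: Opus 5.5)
Your proposal is correct and is essentially the paper's argument. Both proofs locate the $\pm1$-eigenlines of Frobenius on $E_t[3]$ and use the fact that an $x$-coordinate lies in $\fp$ exactly when $F(P)=\pm P$. The paper gets the eigenlines from $\#\mathrm{Ker}(1\mp F)=\deg(1\mp F)=p+1\equiv 0 \bmod 3$, whereas you get them from $F^2=[-p]$ and the trace condition on $E_t[3]$. Your explicit check that the remaining two lines are swapped by $F$ makes precise a step the paper leaves implicit.
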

\begin{proof}
    Let $F$ be a $p$ th-power Frobenius map. Since $E_t$ is defined over $\fp$, a $3$-torsion point $P=(a,b)$ of $E_t(\overline{\fp})$ satisfies $a\in \fp$ if and only if $(1+F)(P)=0$ or $(1-F)(P)=0$. In addition, the assumption that $E_t$ is a supersingular elliptic curve defined over $\fp$ implies that  $\#\textrm{Ker}(1-F)=\textrm{deg}(1-F)=1+p$, and that $\#\textrm{Ker}(1+F)=\textrm{deg}(1+F)=1+p$. If we write $p=12m+11$ with some integer $m\ge 0$, we see that $p+1=2^2\cdot3(m+1)$. Thus each of the subgroups $\textrm{Ker}(1-F)$ and  $\textrm{Ker}(1+F)$ contains a subgroup of order $3$. We choose generators $P_1$ and $P_2$ for them, respectively. These points $P_1$ and $P_2$ is not contained in both of $\textrm{Ker}(1-F)$ and $\textrm{Ker}(1+F) $, so $P_1 \ne P_2$, and in particular, they generate $E[3]$. This means that the points of order $3$ of $\textrm{Ker}(1-F) $ are exactly $\{O,P_1,-P_1\}$ and that those of $\textrm{Ker}(1+F) $ are exactly $\{O,P_2,-P_2\}$. Since $P_1$ and $-P_1$ (resp. $P_2$ and $-P_2$) have the same $x$-coordinate, we get the desired result.  
\end{proof}

In addition, we also \textcolor{black}{determine} when the $j$-invariants of $E_{\Lm^{-}(\lm)}$ and $E_{\Lm^{+}(\lm)}$ are equal mod $p$.
\begin{prop}
\label{whenfp}
    Let $p>5$ be a prime and $E_{\Lm^{-}(\lm)}$ and $E_{\Lm^{+}(\lm)}$  supersinglar elliptic curves with $\lm \in \fp$. 
    Suppose that $j(E_{\Lm^{-}(\lm)}) =j(E_{\Lm^{+}(\lm)})$. 

    \begin{enumerate}
        \item [\rm{(a)}] If $p \equiv 1 \bmod 4$, then $j(E_{\Lm^{\pm}(\lm)}) \equiv8000 \bmod p $ or $j(E_{\Lm^{\pm}(\lm)}) \equiv54000 \bmod p $,
        \item [\rm{(b)}] If $p \equiv 11 \bmod 12 $, then
        $j(E_{\Lm^{\pm}(\lm)}) \equiv54000 \bmod p$. 
    \end{enumerate}
    \begin{proof}
        Let $J:=j(E_{\Lm^{-}(\lm)}) =j(E_{\Lm^{+}(\lm)})$. Since \textcolor{black}{there is an isogeny} of degree $3$ between $E_{\Lm^{-}(\lm)}$ and $E_{\Lm^{+}(\lm)}$ from Remark \ref{isogenybetE},we see that 
        \[
        \Phi_3(j(E_{\Lm^{-}(\lm)}),j(E_{\Lm^{+}(\lm)}))=0\]
         where $\Phi_3(X,Y) $ is the modular polynomial. Thus, the relation $j(E_{\Lm^{-}(\lm)}) \equiv j(E_{\Lm^{+}(\lm)})\bmod p$ means that they are the root of $\Phi_3(X,X) \bmod p$. Furthermore, it follows from Proposition \ref{mojularandhirbert} that 
\begin{eqnarray*}
\Phi_3(X,X) &=& -P_3(X)P_{12}(X)P_8(X)^2P_{11}(X)^2\\
&=& X(X-54000)(X-8000)^2(X+32768)^2,
\end{eqnarray*}
where $P_3(X)=X$, $P_8(X)=X-8000$, $P_{12}(X)=X-54000$, $P_{11}(X)=X+32768$. 

First, we show that $J$ can be $8000 \bmod p$ if $p \equiv1 \bmod 4$ but $J$ never be $8000 \bmod p$ if $p \equiv3 \bmod p$. 
Let $\lm:=1+i$ and $i:=\sqrt{-1}$. Then the elliptic curve $E(\Lm^{\varepsilon}(\lm))$ for $\varepsilon=\pm1$ 
 \[
 y^2=x(x-1)(x-i(1+i\pm\sqrt{i}))
 \]
is isomorphic to an elliptic curve
\[
Y^2=X^3-4X^2+2X
\]
by the change of variables  $x=uX,y=u^{2/3}Y$ where $u=(1-i)(1+i\pm\sqrt{i})/2$. A direct calculation shows that the $j$-invariant of $E_{\Lm^{\varepsilon}(\lm)}$ is $8000 \bmod p$. If $p \equiv 1\bmod 4$, the square root of $-1$ is $\fp$-rational, so $J$ can be $8000 \bmod p$. We know that $i \notin\fp$ if $p \equiv 3\bmod 4$ and by the next Lemma \ref{num_iso} any element $\mu$ that satisfies $j(E_{\Lm^{\pm}(\mu)})\equiv 8000 \bmod p$ should be in $[1+i]$, so $J$ cannot be $8000 \bmod p$. 

Second, we show that $J$ can be $54000 \bmod p$.
let $\lm:=2$. Then we can easily find that the elliptic curves $E_{\Lm^{\pm}(\lm)}$ are isomorphic to an elliptic curve
\[
Y^2=X(X-2X+1/3)
\]
whose $j$-invariant is $54000 \bmod p$.

Third, we show that $J$ is not $-32768 \bmod p$. From Theorem \ref{important} and Theorem \ref{important2}, it suffices to show that $P_{3p}(X) \bmod p $ and $P_{11}(X) \bmod p$ do not have common roots if $p \equiv 1\bmod 4$ and that $P_{p}(X) \bmod p $ and $P_{11}(X) \bmod p$ do not have  common roots if $p \equiv 3\bmod 4$. Gross-Zagier formula (see \S \ref{subsec:GZ}) tells us that $P_{3p}(X) \bmod p $ and $P_{11}(X) \bmod p$ do not have common roots for $p>29$ such that $p \equiv 1 \bmod 4$. If $p=5$, then the elliptic curve with $j$-invariant $-32768 \bmod p$ is not supersingular. If $p=17,29$, then $54000 \equiv-32768 \bmod p$. If $p=13$, then $8000 \equiv -32768 \bmod p$. Similarly, the Gross-Zagier formula tells us that $P_{p}(X) \bmod p $ and $P_{11}(X) \bmod p$ do not have common roots with $p>11$ such that $p \equiv 3 \bmod 4$. We already know that the elliptic curve $E_{\Lm^{\varepsilon}(\lm)}$ with $\lm\in\fp$ and $\varepsilon=\pm1$ cannot be supersingular when $p \equiv7 \bmod 12$ by Corollary \ref{peq7mod12} and we have $54000 \equiv -32768 \bmod 11$. 

Finally, we show that $J$ is not $0 \bmod p$ when $p>5$. It is easy to see that $J \equiv 0 \bmod p$ if and only if there exists $\lm \in \fp$ such that each of $\Lm^{-}(\lm)$ and $\Lm^{+}(\lm)$ is one of $(1\pm\sqrt{-3})/2$. Since $\Lm^{-}(\lm)\ne \Lm^{+}(\lm)$ because $\lm^2-\lm+1\ne 0$ when $p \equiv 1 \bmod4$ or $p \equiv11 \bmod 12$ (for the case $p \equiv1 \bmod 4$, see Lemma \ref{num_of_fpp_1} (c), for the case $p \equiv11 \bmod 12$, since $\sqrt{-3} \notin\fp$), we find that 
\[
\Lm^{-}(\lm) \Lm^{+}(\lm)=(1-\lm)^4=\frac{1+\sqrt{-3}}{2}\cdot \frac{1-\sqrt{-3}}{2}=1,
\]
which means that $\lm=0,2,1\pm i$. In this case, we already know that $j(E_{\Lm^{\pm}(\mu)})\equiv 8000$ or $54000$, which is not equal to $0$ mod $p$ for $p\ne 5$.  
    \end{proof}

    \begin{lem}
\label{num_iso}
    Let $K$ be a field with char $K\ne2$. For $\varepsilon=\pm1$, consider elliptic curves  $E_{\Lambda^{\varepsilon}(\lm)}$ and $E_{\Lambda^{\varepsilon}(\mu)}$ with $\lm,\mu\in K$.  Then 
    \[
    \left\{j\left(E_{\Lambda^{-}(\lm)}\right),j\left(E_{\Lambda^{+}(\lm)}\right) \right\}=\left\{j\left(E_{\Lambda^{-}(\mu)} \right),j\left(E_{\Lambda^{+}(\mu)} \right)\right \}.
    \]
    if and only if $\lm \in [\mu]$, where 
    \[
    [\mu]:=\left\{\mu, \frac{1}{\mu}, 1-\mu,\frac{1}{1-\mu}, \frac{\mu}{\mu-1},\frac{\mu-1}{\mu} \right\}.
    \]   
\end{lem}
    
    \begin{proof}
       Let
       \begin{eqnarray*}
       \textcolor{black}{d^{\varepsilon}(\lm)}
       &:=& \left(\Lm^{\varepsilon}(\lm)\left(\Lm^{\varepsilon}(\lm)-1\right)\right)^2,\\
       \textcolor{black}{n^{\varepsilon}(\lm)}
       &:=& \left(\Lm^{\varepsilon}(\lm)^2-\Lm^{\varepsilon}(\lm)+1 \right)^3
       \end{eqnarray*}
       \textcolor{black}{for $\varepsilon=\pm1$. The assumption}
       \[ \left\{j\left(E_{\Lambda^{-}(\lm)}\right),j\left(E_{\Lambda^{+}(\lm)}\right) \right\}=\left\{j\left(E_{\Lambda^{-}(\mu)} \right),j\left(E_{\Lambda^{+}(\mu)} \right)\right \}
       \]
       \textcolor{black}{is translated into the relations}
       \begin{eqnarray*}
           j\left(E_{\Lambda^{-}(\lm)}\right) j\left(E_{\Lambda^{+}(\lm)}\right) &=&j\left(E_{\Lambda^{-}(\mu)}\right)j\left(E_{\Lambda^{+}(\mu)}\right), \\j\left(E_{\Lambda^{-}(\lm)}\right)+j\left(E_{\Lambda^{+}(\lm)}\right) &=&j\left(E_{\Lambda^{-}(\mu)}\right)+ j\left(E_{\Lambda^{+}(\mu)}\right).
       \end{eqnarray*}
\textcolor{black}{Equivalently, the equations 
$f(\lm,\mu)=0$ and $g(\lm,\mu)=0$
hold, where we put
 \begin{eqnarray*}
 f(\lm,\mu)&:=&n^{-}(\lm)n^{+}(\lm)d^{-}(\mu)d^{+}(\mu) -n^{-}(\mu)n^{+}(\mu)d^{-}(\lm)d^{+}(\lm),\\
g(\lm,\mu)&:=&\left(n^{-}(\lm)d^{+}(\lm)+n^{+}(\lm)d^{-}(\lm)\right)\cdot d^{-}(\mu) d^{+}(\mu)\\
       &&-\left(n^{-}(\mu)d^{+}(\mu)+n^{+}(\mu)d^{-}(\mu) \right)\cdot d^{-}(\lm) d^{+}(\lm).
 \end{eqnarray*}
}
We note that $f(\lm,\mu)$ and $g(\lm,\mu)$ \textcolor{black}{are polynomials in $\lm$ and $\mu$,
i.e., do not contain} square roots $\sqrt{\lm^2-\lm+1}$ and $\sqrt{\mu^2-\mu+1}$. (For the explicit expression of $f(\lm,\mu)$ and $g(\lm,\mu)$, see \cite[9-1]{Github}.) Treating $\lm$ as a variable, let consider an ideal $I$ generated by $f(\lm,\mu)$ and $g(\lm,\mu)$ in $K(\mu)[\lm]$. Then we \textcolor{black}{see} that $I$ is generated by 
       \textcolor{black}{
       \[
(\lm \!-\!\mu)\left(\!\lm-\frac{1}{\mu}\!\right)\left(\lm-(1-\mu)\right)\left(\!\lm-\frac{1}{1-\mu}\!\right)\left(\!\lm-\frac{\mu-1}{\mu}\!\right)\left(\!\lm-\frac{\mu}{\mu-1}\!\right)
       \]
       (see \cite[9-2]{Github}), }
    which tells us that $\lm\in[\mu]$. 

     Conversely, let assume that $\lm\in[\mu]$.  Then the following tables imply that $ \left\{j\left(E_{\Lambda^{-}(\lm)}\right),j\left(E_{\Lambda^{+}(\lm)}\right) \right\}=\left\{j\left(E_{\Lambda^{-}(\mu)} \right),j\left(E_{\Lambda^{+}(\mu)} \right)\right \}$. 
     \renewcommand{\arraystretch}{1.5} 
        \begin{center}
   \begin{tabular}{c||c}
    \multicolumn{2}{c}{} \\ \hline \hline

$\Lm^{\varepsilon}(\lm)$ &$\Lm^{\varepsilon}(1/\lm)$\\ \hline
$(1-\lm)(\lm+\varepsilon\sqrt{{\lm}^2-\lm+1})^2$ &$-(1-\lm)(1+\varepsilon\sqrt{\lm^2-\lm+1})^2/\lm^3$\\
\hline\hline
$\Lm^{\varepsilon}(1-\lm)$ & $\Lm^{\varepsilon}(1/(1-\lm))$ \\ \hline
$\lm(1-\lm+\varepsilon\sqrt{\lm^2-\lm+1})^2$ &  $-\lm (1+\varepsilon\sqrt{\lm^2-\lm+1})^2/(1-\lm)^3$ \\
\hline \hline
$\Lm^{\varepsilon}(\lm/(\lm-1))$ & $\Lm^{\varepsilon}((\lm-1)/\lm)$\\ \hline
$(\lm+\varepsilon\sqrt{\lm^2-\lm+1})^2/(1-\lm)^3$ & $(\lm-1+\varepsilon\sqrt{\lm^2-\lm+1})^2/\lm^3$\\
\hline \hline

    \end{tabular}
 \end{center}

  \begin{center}
   \begin{tabular}{c||c}
    \multicolumn{2}{c}{} \\ \hline \hline

$\Lm^{\varepsilon}(\lm)$ &$1/\Lm^{\varepsilon}(\lm)$\\ \hline
$(1-\lm)(\lm+\varepsilon\sqrt{{\lm}^2-\lm+1})^2$ &$(\lm-\varepsilon\sqrt{\lm^2-\lm+1})^2/(1-\lm)^3$\\
\hline\hline
$1-\Lm^{\varepsilon}(\lm)$ & $1/(1-\Lm^{\varepsilon}(\lm))$ \\ \hline
$\lm(1-\lm-\varepsilon\sqrt{\lm^2-\lm+1})^2$ &  $(\lm-1-\varepsilon\sqrt{\lm^2-\lm+1})^2/\lm^3$ \\
\hline \hline
$\Lm^{\varepsilon}(\lm)/(\Lm^{\varepsilon}(\lm)-1))$ & $(\Lm(\lm)-1)/\Lm(\lm)$\\ \hline
$-(1-\lm)(1+\varepsilon\sqrt{\lm^2-\lm+1})^2/\lm^3$ & $-\lm(1-\varepsilon\sqrt{\lm^2-\lm+1})^2/(1-\lm)^3$\\
\hline \hline

    \end{tabular}
 \end{center}
 In particular, we have
  \begin{eqnarray*}
       1/\Lambda^{\varepsilon}(\lm)&=&\Lm^{-\varepsilon}\left( {\lm}/{(\lm-1)} \right),\\
        1-\Lm^{\varepsilon}(\lm)&=&\Lm^{-\varepsilon}(1-\lm), \\
        {1}/({1-\Lm^{\varepsilon}(\lm)})&=&\Lm^{-\varepsilon}\left(({\lm-1})/{\lm}\right),\\
       ({\Lm^{\varepsilon}(\lm)-1})/{\Lm^{\varepsilon}(\lm)}&=&\Lm^{-\varepsilon}\left({1}/({1-\lm} )\right), \\
        {\Lm^{\varepsilon}(\lm)}/({\Lm^{\varepsilon}(\lm)-1}) &=& \Lm^{\varepsilon}\left({1}/{\lm}\right)
    \end{eqnarray*}
     for $\varepsilon=\pm1$.
    \end{proof}
\end{prop}

\section{Factorization of $P_{3p}(X)$ mod $p$ for $p\equiv 1 \bmod 4$.}
\label{observationoffactorization}
In this section, we determine the factorization of $P_{3p}(X) \bmod p$ for $p\equiv1\bmod 4$. Our aim is to prove the next theorem.
\begin{thm}
\label{factorization_P3p}
    Let $P_{3p}(X)$ be the Hilbert class polynomial of level $3p$ where $p>5$ is a prime such that $p \equiv1 \bmod 4$. Then $P_{3p}(X) \bmod p$ factors into as follows:
    \[
    P_{3p}(X) \equiv a_1(X)^4a_2(X)^2b_{3,p}(X)^2\cdots b_{n,p}(X)^2 \pmod p,
    \]
    where
    \begin{enumerate}
        \item [] $a_1(X)=X-8000$ and $a_2(X)=X-54000$ if $p \equiv 5\bmod 24 $,
        \item  [] $a_1(X)=X-8000$ and $a_2(X)=1$ if $p \equiv 13\bmod 24 $,
        
          \item  [] $a_1(X)=1$ and $a_2(X)=X-54000$ if $p \equiv 17\bmod 24 $
          \item [] $a_1(X)=1$ and $a_2(X)=1$ if $p \equiv 1\bmod 24 $,
    \end{enumerate}
   and each $b_{i,p}(X)$ is an irreducible polynomial of degree $2$ defined over $\fp$.  
\end{thm}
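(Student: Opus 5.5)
The plan follows Kaneko's proof of Proposition \ref{fac_P_p}: compare the Kronecker congruence for $\Phi_{3p}$ with the expression of $\Phi_{3p}(X,X)$ as a product of Hilbert class polynomials.

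\emph{Step 1: the right-hand side is a square.} By Proposition \ref{generalkronecker} with $n=3$,
\[
\Phi_{3p}(X,X)\equiv \Phi_3(X,X^p)\,\Phi_3(X^p,X)\equiv \Phi_3(X,X^p)^2 \pmod p,
\]
where the last congruence uses the symmetry of $\Phi_3$. Hence every root of $\Phi_{3p}(X,X)\bmod p$ has even multiplicity.

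\emph{Step 2: locate the roots of $P_{3p}$ and their fields of definition.} By Proposition \ref{mojularandhirbert}, $\Phi_{3p}(X,X)=c\prod_D P_D(X)^{r(3p,D)}$, and $P_{3p}$ occurs in this product (take $\mu=\sqrt{-3p}$). By Theorem \ref{important}, a root $j$ of $P_{3p}\bmod p$ is $j(E_{\Lm^{\varepsilon}(\lm)})$ for some $\lm\in\fp$; it is supersingular, so $j\in\fpp$. The Frobenius sends $E_{\Lm^{-}(\lm)}$ to $E_{\Lm^{+}(\lm)}$, so $j^p=j(E_{\Lm^{-\varepsilon}(\lm)})$. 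Thus $j\in\fp$ exactly when $j(E_{\Lm^-(\lm)})=j(E_{\Lm^+(\lm)})$. By Proposition \ref{whenfp}(a) this forces $j\in\{8000,54000\}$. Consequently, apart from $X-8000$ and $X-54000$, every irreducible factor of $P_{3p}\bmod p$ is quadratic over $\fp$.

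The two special roots need the following checks:
\begin{itemize}
\item $X-8000$ divides $P_{3p}\bmod p$ exactly when the curve with $j=8000$ (CM by $\bZ[\sqrt{-2}]$) is supersingular and lies in the family. Supersingularity means $\left(\frac{-8}{p}\right)=-1$, i.e. $p\equiv 5\bmod 8$ given $p\equiv1\bmod4$.
\item $X-54000$ divides it exactly when $j=54000$ (discriminant $-12$) is supersingular, i.e. $p\equiv 2\bmod 3$.
\item Combining these congruences gives the four cases mod $24$.
\end{itemize}

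\emph{Step 3: multiplicities of the generic roots.} For a root $j\notin\{8000,54000\}$, I will show that $j$ is a simple root of $\Phi_3(X,X^p)$ over $\overline{\fp}$. Since $(X^p)'=0$, the derivative is $\partial_X\Phi_3(X,Y)|_{Y=X^p}$. So it suffices that $\partial_X\Phi_3(j,j^p)\ne0$, i.e. that $(j,j^p)$ is not a point where two branches of the modular curve $X_0(3)$ meet. Such points come from curves with two distinct $3$-isogenies to the same target, hence with extra endomorphisms of small degree. I expect these to be excluded by the same Gross--Zagier type argument used for $-32768$ in Proposition \ref{whenfp}.

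Granting simplicity, $j$ has multiplicity exactly $2$ in $\Phi_{3p}(X,X)\bmod p$. I also need that $j$ is not a root of any other $P_D$ with $r(3p,D)>0$. The relevant $D$ satisfy $12p-t^2=Df^2$, and the $t\ne 0$ terms have $D<12p$ with $p\nmid D$. Gross--Zagier (Theorem \ref{thm:GZ_for_3p} and the explicit formula) shows their roots meet those of $P_{3p}$ only in finitely many small cases, checked by hand. The term $P_{12p}$ is separated because an order of discriminant $-12p$ cannot also contain $\frac{1+\sqrt{-3p}}{2}$ in the same quadratic subfield.

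Given these separations, the multiplicity of $j$ in $P_{3p}$ is at most $2$, and by Step 1 it is even, hence exactly $2$. Therefore $b_{i,p}^2$ appears.

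\emph{Step 4: the special roots.} For $8000$, Proposition \ref{GZmultiplicity}(1) gives $m_j=4$ with $P_8$ having a simple root, so $8000$ has multiplicity $4$ in $P_{3p}$, giving $a_1^4$; here $p=5$ is excluded. For $54000$, I would compute the local multiplicity of $\Phi_3(X,X^p)$ at $54000$ using $\#\Aut$ and the factor $P_{12}$ in $\Phi_3(X,X)$, aiming for multiplicity $2$; an alternative is a Gross--Zagier computation with $D=3$.

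\emph{Step 5: degree check.} As a consistency check, $4\deg a_1+2\deg a_2+2\sum_i\deg b_{i,p}=h(-3p)$ should agree with the counts in Theorem \ref{thm:A}.

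I expect the main obstacle to be Step 3: showing simplicity of the roots of $\Phi_3(X,X^p)$ and separating the roots of $P_{3p}$ from those of the other $P_D$ in the product, together with the exact multiplicity at $54000$.
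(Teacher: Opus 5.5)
There is a genuine gap in Step~3, exactly where the paper does most of its work. Your framework matches the paper's: the congruence $\Phi_{3p}(X,X)\equiv\Phi_3(X,X^p)^2$, rationality of roots via Proposition~\ref{whenfp}(a), and Gross--Zagier at $8000$. You also correctly diagnose that double roots of $\Phi_3(X,X^p)$ come from small-discriminant CM. However, these roots cannot be excluded.

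Since $\frac{d}{dX}\Phi_3(X,X^p)\equiv\partial_X\Phi_3(X,Y)|_{Y=X^p}$, a double root $x_0$ forces $x_0^p$ to be a root of $P_3P_4P_8P_{11}P_{20}P_{32}P_{35}$ mod $p$ (Lemma~\ref{resultantphi3}). The rational roots are disposed of by Proposition~\ref{whenfp}(a); that is why the argument for $-32768$ works. The roots of $P_{20},P_{32},P_{35}$, by contrast, lie in $\fpp\smallsetminus\fp$ and genuinely occur.

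For example, in $O_{20}=\bZ[\sqrt{-5}]$ the prime $3$ splits into two non-principal ideals. So both horizontal $3$-isogenies from each of the two CM curves land on the other, and $j_1$ is a double root of $\Phi_3(X,j_2)$. For $p\equiv 2,3\bmod 5$, $P_{20}$ is irreducible mod $p$ and Frobenius swaps $\bar j_1,\bar j_2$. Hence $\bar j_1$ is a double root of $\Phi_3(X,X^p)$, while Proposition~\ref{GZmultiplicity}(2) shows that $\bar j_1$ is a root of $P_{3p}\bmod p$. The same happens for $P_{32}$ when $p\equiv 5\bmod 8$.

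At such roots the parity count gives no upper bound, so the multiplicity must be computed independently. The paper uses the Gross--Zagier formula (Theorem~\ref{thm:GZ_for_3p}) for $D=20,35$. For $D=32$ it uses a $2$-isogeny bijection with the four lifts reducing to $8000$ (Proposition~\ref{multiplicity}(b)). Your plan contains no ingredient that produces these multiplicities.

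Your separation steps also fail as stated. The $P_{12p}$ argument is characteristic-zero reasoning. After reduction, the quaternion endomorphism ring can contain optimal embeddings of both $O_{3p}$ and $O_{12p}$, and this does occur: in the paper's $D=32$ analysis, the $O_{12p}$-curve $\tilde E_{i3}$ reduces to a root of $P_{32}$ that is also a root of $P_{3p}\bmod p$. The paper instead proves (Proposition~\ref{commonof3pand12p}) that a common root of $P_{3p}$ and $P_{12p}$ mod $p$ is a double root of $\Phi_3(X,X^p)$. The reason is that the two embeddings of $\sqrt{-3p}=\nu\circ F$ force two distinct kernels for the degree-$3$ factor. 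Such roots therefore land in the finite exceptional list above.

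For the other $P_D$, no separation is needed, and your Gross--Zagier finiteness claim is unsupported since these $D$ vary with $p$. Proposition~\ref{Modular_Hilbert} shows $r(3p,D)$ is even when $p\nmid D$, so $\Phi_{3p}(X,X)=P_{3p}P_{12p}T^2$. At a simple root of $\Phi_3(X,X^p)$ that is not a root of $P_{12p}$, this forces multiplicity exactly $2$ in $P_{3p}$; Step~1 alone would not give this. The same argument settles $54000$ without extra work: $P_{12}$ is absent from the discriminant of $\Phi_3$, so $54000$ is a simple root of $\Phi_3(X,X^p)$ for large $p$.
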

We prepare some propositions to prove the theorem above.
The next two propositions are crucial to see the factorization of $P_{3p}(X) \bmod p$. 
\begin{prop}
\label{Modular_Hilbert}
Let $p > 5 $ be a prime such that $p \equiv1 \bmod4$ and $\Phi_{3p}(X,Y)$ the modular polynomial. Then we have
    \[
    \Phi_{3p}(X,X)=P_{3p}(X)P_{12p} (X)T(X)^2
    \]
    with some polynomial $T(X)\in \bZ[X]$.
\end{prop}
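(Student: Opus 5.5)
We will read off the factorization of $\Phi_{3p}(X,X)$ from Proposition \ref{mojularandhirbert} with $n=3p$:
\[
\Phi_{3p}(X,X)=c_{3p}\prod_D P_D(X)^{r(3p,D)},
\]
where $r(3p,D)$ counts primitive elements $\mu\in O_D$ of norm $3p$, up to units. Writing $\mu=(x+y\sqrt{-D})/2$, the condition $N(\mu)=3p$ becomes $x^2+Dy^2=12p$. If $\mu$ is primitive then $\mu\notin\bZ$, so $y\neq 0$. Hence only discriminants with $D\mid 12p-x^2$, $D\le 12p$, can contribute, and the task is to list the pairs $(x,y)$ and group them by $D$.

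The case $x=0$ gives $Dy^2=12p$, so $y=1$ with $D=12p$, or $y=2$ with $D=3p$ (which is $\equiv 3\bmod 4$ because $p\equiv1\bmod4$). For the contributions of $P_{3p}$ and $P_{12p}$ we count primitive classes of norm $3p$. In $O_{3p}=\bZ[(1+\sqrt{-3p})/2]$ the solutions with $x=0$ are $\pm\sqrt{-3p}$. These form one class under the units $\pm1$ (note $O_{3p}$ has unit group $\{\pm1\}$ since $3p>4$), so the exponent is $1$. Solutions with $x\neq0$ of $x^2+3py^2=12p$ would need $p\mid x$, and $x^2\le 12p$ forces $x=0$ once $p>12$. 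The small primes $p=5,13$ would be checked separately. Similarly, in $O_{12p}=\bZ[\sqrt{-3p}]$ the solutions $\pm\sqrt{-3p}$ give exponent $1$. We must also rule out that the same $D$ arises from other $x$: that would require $D=12p$ or $3p$ to divide $12p-x^2$ with $x\ne0$. This forces $p\mid x$ and $x^2<12p$, which is impossible for $p>12$.

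For $x\neq 0$ the solutions come in pairs $\pm x$, so $\mu$ and $\bar\mu$ give two classes. We will show that the total exponent $r(3p,D)$ of every remaining $D$ is even. The conjugate $\bar\mu=(x-y\sqrt{-D})/2$ has the same norm and is primitive whenever $\mu$ is. It is associate to $\mu$ only if $\bar\mu=\pm\mu$ or, when $D\in\{3,4\}$, $\bar\mu=u\mu$ for another unit $u$. The case $\bar\mu=\pm\mu$ means $x=0$ or $y=0$, which we have excluded. So complex conjugation is a fixed-point-free involution on primitive classes for $D>4$, and $r(3p,D)$ is even there. This handles every $D\notin\{3,4,3p,12p\}$.

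The main obstacle is $D=3$ and $D=4$, where extra units could make the count odd. For $D=4$ we need $x^2+4y^2=12p$, i.e. $(x/2)^2+y^2=3p$. Since $3\nmid$ a sum of two squares unless $3$ divides both, and then $9\mid 3p$, which is impossible, there are no solutions. For $D=3$ we need $x^2+3y^2=12p$, i.e. an element of norm $3p$ in $\bZ[\omega]$. Such elements exist only if $p\equiv1\bmod3$. The plan is to count them explicitly: they have the form $\sqrt{-3}\cdot\pi$ with $\pi$ of norm $p$. This gives $\pi$ and $\bar\pi$ up to the six units, so two primitive classes, and the exponent is $2$. With that, $\Phi_{3p}(X,X)=\pm P_{3p}P_{12p}T^2$, after absorbing $c_{3p}$, which should be $\pm1$ since both sides are monic up to sign. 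Finally, $T\in\bZ[X]$ because each $P_D$ lies in $\bZ[X]$.
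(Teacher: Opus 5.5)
Your proposal is correct and follows the paper's outline. You apply Proposition~\ref{mojularandhirbert} with $n=3p$, solve $x^2+Dy^2=12p$, get exponent $1$ for $D=3p$ and $D=12p$ from the solutions with $x=0$, and pair the remaining classes by complex conjugation.

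The real difference is how you show that $\mu$ and $\bar\mu$ are not $O_D$-equivalent. The paper never uses the unit group. It computes $\mu/\bar\mu=\mu^2/3p=(x^2-Dy^2+2xy\sqrt{-D})/12p$, which lies in $O_D$ only if $p\mid xy$. Together with $x^2+Dy^2=12p$ and $p\nmid D$, this forces $p\mid x$ and $p\mid y$, which is impossible. Because this shows the quotient is not even integral, $D=3$ and $D=4$ are covered automatically. Your route is also correct: units $\pm1$ for $D>4$, then $D=4$ has no solutions because $3p$ is not a sum of two squares, and $D=3$ gives the two classes $\sqrt{-3}\,\pi$ and $\sqrt{-3}\,\bar\pi$. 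It computes those two exponents exactly, but the case split is not needed just to get evenness.

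Two minor points.
\begin{itemize}
\item Your separate check of $p=5,13$ is unnecessary. The hypotheses give $p\ge 13$, and when $p\mid D$ and $y\neq0$ you have $x^2=12p-Dy^2\le 9p<p^2$.
\item Your final identity carries a sign $\pm$, which cannot be absorbed into $T^2$; the paper simply drops $c_{3p}$. In fact $c_{3p}=+1$. Write $\Phi_{3p}(j(\tau),j(\tau))=\prod_\gamma\bigl(j(\tau)-j(\gamma\tau)\bigr)$ over $\gamma=\left(\begin{smallmatrix}a&b\\0&d\end{smallmatrix}\right)$ with $ad=3p$ and $0\le b<d$. Only the factors with $a>d$ have $q$-expansion leading coefficient different from $1$. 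These contribute $-1$ for $(a,d)=(3p,1)$ and $\prod_{b=0}^{2}(-\zeta_3^{-b})=-1$ for $(a,d)=(p,3)$, so the product of signs is $+1$.
\end{itemize}
The sign plays no role in the later use modulo $p$.
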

\begin{proof}
We use Proposition \ref{mojularandhirbert} in this proof. Let $D$ be an integer such that $D\equiv0,3 \bmod 4$ and $\mu:=(x+y\sqrt{-D})/2\in O_D$ with $x,y \in \bZ$  a primitive element.  Assume that $N(\mu)=3p$. Then we have
\begin{eqnarray}
\label{D=3prelation}
    x^2+Dy^2=12p.
\end{eqnarray}

Firstly, we observe $D$ such that $p\mid D$. It is easy to see that only $D=3p$ with $x=0,y=\pm2$ and $D=12p$ with $x=0,y=\pm1$ satisfy the condition (\ref{D=3prelation}). For the former case, since 
\[
\mu=\pm\sqrt{-3p}=2\frac{3p+\sqrt{-3p}}{2}-3p
\]
and $3p$ is odd, $\pm\sqrt{-3p}$ are primitive in $O_{3p}$. In addition, $\pm \sqrt{-3p}$ are $O_{3p}$-equivalent and therefore $r(3p,3p)=1$. For the latter case, $\pm\sqrt{-3p}$ are primitive in $O_{12p}$ and $O_{12p}$-equivalent. Thus $r(3p,12p)=1$.

Secondly, we observe $D$ such that $p\nmid D$. In this case, we find that $x,y\ne 0$ since $p\ne2,3$. Therefore, If there exists a primitive solution $\mu_o=(x_0+y_0\sqrt{-D})/2$ of the equation (\ref{D=3prelation}), then $(\pm x_0\pm y_0\sqrt{-D})/2$ are also solutions. Obviously, $\pm( x_0+ y_0\sqrt{-D})/2$ are $O_D$-equivalent, as are $\pm( x_0- y_0\sqrt{-D})/2$. Consider
\[
\frac{x_0+y_0\sqrt{-D}}{x_0-y_0\sqrt{-D}}=\frac{x_0^2-Dy_0^2+2x_0y_0\sqrt{-D}}{12p}.
\]
If it is in $O_{D}$, then $p\mid x_0y_0$ because $p \nmid D$.  Furthermore, equation (\ref{D=3prelation}) implies $p\mid x_0$ and $p\mid y_0$. This contradicts the fact that $\mu_0$ is primitive. Hence, we obtain $r(3p,D)=0$ or $r(3p,D)=2e$ with some integer $e\ge1$. Applying Proposition \ref{mojularandhirbert} yields
     \[
    \Phi_{3p}(X,X)=P_{3p}(X)P_{12p}(X) \prod_{p \nmid D }P_D(X)^{2e}.
    \]
    Setting $T(X):=\prod_{p \nmid D }P_D(X)^{e}$, we obtain the claim.    
\end{proof}

\begin{prop}
\label{3p_Kronecker}
Let $p \ge 5$ be a prime.  Then we have
    \[
    \Phi_{3p}(X,X)\equiv \Phi_3(X,X^p)^2 \bmod p. 
    \]
\end{prop}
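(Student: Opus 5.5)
The plan is to apply the generalized Kronecker congruence, Proposition \ref{generalkronecker}, with $n=3$; this is allowed because $\gcd(3,p)=1$ for $p\ge 5$. The proposition gives, as polynomials in $X$ over $\bZ[j]$,
\[
\Phi_{3p}(X,j)\equiv \Phi_3(X,j^p)\,\Phi_3(X^p,j) \pmod p .
\]
Both sides lie in $\bZ[X,j]$, so the congruence holds coefficientwise in $\bZ[X,j]$. Hence we may specialize $j=X$, i.e.\ apply the ring homomorphism $\bZ[X,j]\to\bZ[X]$ sending $j\mapsto X$, which preserves congruences modulo $p$. This yields
\[
\Phi_{3p}(X,X)\equiv \Phi_3(X,X^p)\,\Phi_3(X^p,X)\pmod p .
\]

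It then remains to identify the two factors. The key input is that the modular polynomial is symmetric, $\Phi_n(X,Y)=\Phi_n(Y,X)$. This holds because a cyclic $n$-isogeny $E_1\to E_2$ has a dual that is again a cyclic $n$-isogeny $E_2\to E_1$, and it is a standard property of $\Phi_n$ from its construction. Symmetry gives $\Phi_3(X^p,X)=\Phi_3(X,X^p)$ as polynomials, and therefore
\[
\Phi_{3p}(X,X)\equiv \Phi_3(X,X^p)^2 \pmod p .
\]

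The only point needing care, and the closest thing to an obstacle, is the legitimacy of substituting $j=X$ into the congruence of Proposition \ref{generalkronecker}. That proposition must be read as a congruence in $\bZ[X,j]$, with $j$ an indeterminate, rather than merely for particular values of $j$. This is exactly how it is stated in Igusa and Deuring, as a congruence of two-variable polynomials. Once that reading is granted, the substitution is a ring homomorphism and the argument is immediate; no Hilbert class polynomial input is needed here.
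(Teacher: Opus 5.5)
Your proposal is correct and follows the paper's approach: the paper's proof is the single line ``apply Proposition~\ref{generalkronecker} with $n=3$.'' You supply the steps that line leaves implicit, namely the substitution $j\mapsto X$ and the symmetry $\Phi_3(X^p,X)=\Phi_3(X,X^p)$.
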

\begin{proof}
    Applying Proposition \ref{generalkronecker} to $n=3$. 
\end{proof}

From Proposition \ref{Modular_Hilbert} and Proposition \ref{3p_Kronecker}, for primes $p > 5$, we see that if $\alpha \in \overline{\fp}$ is a root of $P_{3p}(X) \bmod p$ but not of $P_{12p}(X) \bmod p$, and is a simple root of $\Phi_3(X, X^p) \bmod p$, then $\alpha$ is precisely a double root of $P_{3p}(X) \bmod p$.
Therefore, in order to prove Theorem \ref{factorization_P3p}, it suffices to analyze the roots of $P_{3p}(X) \bmod p$ that are also roots of $P_{12p}(X) \bmod p$, as well as those that are not simple roots of $\Phi_3(X, X^p) \bmod p$.


We first observe double roots of $\Phi_3(X, X^p) \bmod p$ by using the following lemma.

\begin{lem}
\label{resultantphi3}
    Let ${\Phi}_3(X,Y)$ be a modular polynomial. Then the Resultant of $\Phi_3(X,Y)$ and $\frac{\partial }{\partial X} {\Phi}_3(X,Y)$ with respect to $X$ factors into 
    \begin{eqnarray*}
       -P_3(Y)^2P_4(Y)^2P_8(Y)^2P_{11}(Y)^2P_{20}(Y)^2P_{32}(Y)^2P_{35}(Y)^2.
    \end{eqnarray*}

\end{lem}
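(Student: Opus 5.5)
This is an identity in $\mathbb{Z}[Y]$, so I would prove it by identifying the roots of both sides and then matching the multiplicities. A direct computer-algebra check also works, since $\Phi_3$ is explicitly known.

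\textbf{Step 1: degree and leading coefficient.} $\Phi_3(X,Y)$ is monic of degree $4$ in $X$, and its $Y$-degree is also $4$. The resultant $R(Y):=\mathrm{Res}_X(\Phi_3,\partial_X\Phi_3)$ equals, up to sign, the discriminant of $\Phi_3(X,Y)$ as a polynomial in $X$. Its degree in $Y$ is bounded by $2\cdot 4\cdot 3 = 24$. On the right-hand side, the class numbers are $h(-3)=h(-4)=h(-8)=h(-11)=1$, $h(-20)=2$, $h(-32)=2$ and $h(-35)=2$. So the product has degree $2(1+1+1+1+2+2+2)=20$. I would therefore first compute the exact $Y$-degree of $R$ and its leading coefficient from the known expansion of $\Phi_3$ (for example via $Y\to\infty$, using the $q$-expansion), to confirm that the constant is $-1$ and the degree is $20$.

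\textbf{Step 2: the roots of $R$.} $R(j)=0$ exactly when two of the four $3$-isogenous $j$-invariants $j(E/C)$, for cyclic $C\subset E[3]$, coincide. If $E/C_1\cong E/C_2$ with $C_1\neq C_2$, then composing $E\to E/C_1\cong E/C_2\to E$ (the second map being the dual) gives an endomorphism of degree $9$ that is not $\pm 3$. So $\mathrm{End}(E)$ contains a primitive element $\mu$ with $N(\mu)=9$, i.e. a solution of $x^2+Dy^2=36$ with $y\neq 0$. The possible discriminants are then $D=35,32,27,20,11,8$ and $D=4,3$; for $D\mid$ a factor of one of these, the relevant orders are $O_D$ with $D\in\{3,4,8,11,20,32,35\}$, together with $27$.

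Here $D=27$ must be excluded, or shown to be absorbed. The reason is that for $\mu=(3+\sqrt{-27})/2$ the quotient $\mu/\bar\mu$ is a unit, so the two kernels are in fact equal; the same phenomenon happens at $3$, where $\ker(\mu)$ must be cyclic. Checking precisely which $D$ give genuinely cyclic distinct kernels is the main obstacle. I would handle it by listing all primitive $\mu$ with $N(\mu)=9$ and testing whether $\ker\mu$ is cyclic of order $9$, i.e. $\mu\notin 3O$. Conversely, each listed $D$ produces a coincidence, so each $P_D$ divides $R$.

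\textbf{Step 3: multiplicities.} $R$ is a discriminant, and at a simple collision the discriminant vanishes to order $2$ generically. This is the analogue of Kronecker's relation in Proposition \ref{mojularandhirbert}, since $\mathrm{disc}_X\Phi_3(X,Y)=\prod_{i<j}(j_i-j_j)^2$. Each collision pair is counted with the exponent $2$ times the local order of vanishing of $j_i-j_j$. I would argue that this local order is $1$ for each relevant CM point, by comparing with the count of $\mu$ up to units, as in $r(9,D)$. Rather than proving this abstractly, I would close the argument with the degree count: the exponents on the right-hand side give total degree $20=\deg R$, so once each $P_D^2$ is shown to divide $R$ and $R$ has no further roots, equality up to the constant follows. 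The constant is then fixed by comparing leading coefficients from Step 1. In practice, since $\Phi_3$ is explicit, the final verification is a direct factorization by computer, and the structural argument above explains why only these $P_D$ occur.
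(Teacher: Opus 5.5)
Your Step~1 cannot succeed, because the constant in the statement is wrong. Carry out exactly the $q$-expansion you propose. The three roots $j((\tau+k)/3)=\zeta^{-k}q^{-1/3}+744+\cdots$ contribute $\prod_{k<l}(\zeta^{-k}-\zeta^{-l})^2=\mathrm{disc}(t^3-1)=-27$. The three pairs involving $j(3\tau)=q^{-3}+\cdots$ contribute $q^{-18}(1+o(1))$. Hence $R(Y)=-27Y^{20}+\cdots$. Equivalently, the only degree-$20$ term of the quartic discriminant is $-27a_3^4a_0^2$, with $a_3=-Y^3+\cdots$ and $a_0=Y^4+\cdots$.

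There is also an independent check. By Proposition~\ref{prop:kronecker1}, $\Phi_3\equiv(X^3-Y)(X-Y^3)$, so $\partial_X\Phi_3\equiv X^3-Y \pmod 3$. The leading $X$-coefficients $1$ and $4$ survive reduction, so $R\equiv 0\pmod 3$. But $-\prod P_D^2$ is monic up to sign, so it is not $0$ mod $3$. The identity therefore holds only in the form $R=-27\,P_3^2P_4^2P_8^2P_{11}^2P_{20}^2P_{32}^2P_{35}^2$. The direct computer factorization you fall back on, which is all the paper's proof consists of, would show exactly this. The error is harmless for the paper: Proposition~\ref{doublerootofphi3} only uses the zero set of $R$ modulo primes $p\ge 5$.

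Your structural Step~2 also needs repair, even though it lands on the right list.

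\emph{Primitivity does not follow.} From $E/C_1\cong E/C_2$ you only get that $\hat\phi_2\circ\iota\circ\phi_1\neq\pm3$. It can still be $3u$ with $u\neq\pm1$ a unit. This, not a primitive $\mu$, is exactly how $j=0$ and $j=1728$ arise, so your argument does not justify $D=3,4$.

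\emph{Your list and your exclusion test are both off.} The list omits $D=36$ ($x=0$, $y=1$, $\mu=3i\in\mathbb{Z}[3i]$). The cyclicity test does not exclude $D=27$ or $D=36$: $3+3\omega\notin 3O_{27}$ and $3i\notin 3O_{36}$, so both kernels are cyclic of order $9$.

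\emph{The right criterion is non-backtracking.} Write the cyclic $9$-isogeny as $\psi\circ\phi_1$ through $E'=E/C_1$. Then $C_2=\ker\hat\psi=C_1$ exactly when $\psi=\hat\phi_1\circ u$ for some $u\in\Aut(E')$. For $D=27,36$, $\mu=3u$ with $u$ a unit of the maximal order, $E'$ has $j=0$, resp.\ $1728$, and $C_2=C_1$. For $D\in\{8,11,20,32,35\}$, $3=\mathfrak p\bar{\mathfrak p}$ splits in $O_D$ with $\mathfrak p^2$ principal. This gives $C_1=E[\mathfrak p]\neq C_2=E[\bar{\mathfrak p}]$ with isomorphic quotients.

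\emph{Step~3 breaks at the ramified points.} The rule ``exponent $2$ per colliding pair'' fails at $j=0$ and $j=1728$. At $j=0$ three roots form a Puiseux $3$-cycle, contributing order $2$ in total. At $j=1728$ two $2$-cycles contribute $1$ each. Your closing degree count still works once you show order $\ge 2$ at every root, since $2\cdot 10=20=\deg R$. At the other CM points this comes from the branches being analytic; at $0$ and $1728$ it comes from the Puiseux expansions.
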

\begin{proof}
    Follows from a direct calculation (see \cite[10]{Github}).
\end{proof}
The next proposition describes the double roots of $\Phi_3(X, X^p) \bmod p$.
\begin{prop}
\label{doublerootofphi3}
    Let $p$ be a prime.  Assume that an element $x_0 \in \overline{\fp}$ is the double root of ${\Phi}_3(X, X^p) \bmod p$.  Then for all but finitely many $p$, it is the root of either $P_3(X)$, $P_4(X)$, $P_8(X)$, $P_{11}(X)$, $P_{20}(X)$, $P_{32}(X)$ or $P_{35}(X) \bmod p$.
\end{prop}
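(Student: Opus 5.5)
We plan to reduce the statement to Lemma \ref{resultantphi3} by a derivative computation in characteristic $p$. Put $G(X):=\Phi_3(X,X^p)\in\fp[X]$. By the chain rule,
\[
G'(X)=\frac{\partial\Phi_3}{\partial X}(X,X^p)+pX^{p-1}\frac{\partial\Phi_3}{\partial Y}(X,X^p).
\]
Since $pX^{p-1}\equiv 0 \bmod p$, this gives $G'(X)\equiv \frac{\partial\Phi_3}{\partial X}(X,X^p) \pmod p$.

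Let $x_0\in\overline{\fp}$ be a double (or higher) root of $G$. Then $G(x_0)=G'(x_0)=0$. Set $y_0:=x_0^p$. The two conditions say that $X=x_0$ is a common root of the one-variable polynomials $\Phi_3(X,y_0)$ and $\frac{\partial}{\partial X}\Phi_3(X,y_0)$ over $\overline{\fp}$. Hence the resultant $R(Y):=\mathrm{Res}_X\bigl(\Phi_3(X,Y),\partial_X\Phi_3(X,Y)\bigr)$, reduced mod $p$, vanishes at $Y=y_0$.

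To justify this we must check that specialization commutes with the resultant. $\Phi_3(X,Y)$ is monic in $X$ of degree $4$ over $\bZ[Y]$, so its leading coefficient in $X$ is $1$. Its $X$-derivative has leading coefficient $4$, a unit mod $p$ for $p\ne2$. Therefore reducing mod $p$ and then specializing $Y=y_0$ does not drop degrees, and the Sylvester resultant specializes correctly.

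By Lemma \ref{resultantphi3}, $R(Y)=-\prod_{D\in\{3,4,8,11,20,32,35\}}P_D(Y)^2$. So $y_0=x_0^p$ is a root of some $P_D(X)\bmod p$ with $D$ in this list. Each $P_D$ has integer coefficients, so $P_D(x_0)^p=P_D(x_0^p)=0$ in $\overline{\fp}$. Hence $x_0$ itself is a root of $P_D(X)\bmod p$, which is the claim.

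The only place primes must be excluded is where the degree argument or the factorization could degenerate. The identity of Lemma \ref{resultantphi3} holds in $\bZ[Y]$, so it holds mod every $p$. The leading coefficient condition only fails at $p=2$. We will therefore state the exceptional set as finite (e.g.\ $p\le 3$) for safety.

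The main obstacle is the resultant-specialization step. We will handle it by the explicit leading-coefficient check above, which should be routine.
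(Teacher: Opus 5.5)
Your proposal is correct and takes the same route as the paper. The chain rule kills the $\partial_Y$ term mod $p$, so $y_0=x_0^p$ is a root of the resultant in Lemma \ref{resultantphi3}, and because each $P_D$ has integer coefficients, $P_D(x_0)^p=P_D(x_0^p)$ transfers this back to $x_0$. Your check on resultant specialization and your explicit Frobenius step spell out what the paper leaves implicit. The specialization in fact works for every $p$, since $\Phi_3$ is monic in $X$.
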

\begin{proof}
    We note that 
    \begin{eqnarray*}
         \frac{\partial }{\partial X} {\Phi}_3(X,X^p)&=& \left.  \frac{\partial }{\partial X} {\Phi}_3(X,Y) \right |_{Y=X^p}+ \frac{\partial }{\partial Y} {\Phi}_3(X,Y) \cdot  \frac{\partial X^p}{\partial X}\\ &=& \left.  \frac{\partial }{\partial X} {\Phi}_3(X,Y) \right |_{Y=X^p} \pmod p.
    \end{eqnarray*}
    Then $x_0$ is the double root of ${\Phi}_3(X,X^p) \bmod p$ if and only if $x_0$ is the root of ${\Phi}_3(X,X^p) \bmod p$ and $\left.  \frac{\partial }{\partial X} {\Phi}_3(X,Y) \right |_{Y=X^p}$. We set $y_0:={x_0}^p$. By Lemma \ref{resultantphi3}, the element $y_0$ satisfies 
    \[
    P_3(y_0)P_4(y_0)P_8(y_0)P_{11}(y_0)P_{20}(y_0)P_{32}(y_0)P_{35}(y_0)=0.
    \]
    Since the Hilbert class polynomials have coefficients in $\bZ$, we obtain the claim. 
\end{proof}
Next, we observe the common roots of $P_{3p}(X) \bmod p$ and  $P_{12p}(X) \bmod p$. 
\begin{prop}
\label{commonof3pand12p}
    Let $p>5$ be a prime such that $p \equiv 1 \bmod 4$.  Assume that $P_{3p}(X) \bmod p$ and $P_{12p}(X) \bmod p$ have a common root $\overline{x_0}$. Then $\overline{x_0}$ is \textcolor{black}{a} double root of $\Phi_3(X,X^p) \bmod p$.
\end{prop}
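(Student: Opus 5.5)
Since $\overline{x_0}$ is a root of both $P_{3p}(X)\bmod p$ and $P_{12p}(X)\bmod p$, Proposition \ref{Modular_Hilbert} shows that $\Phi_{3p}(X,X)\bmod p$ vanishes at $\overline{x_0}$ to order at least $2$. That alone only says $\Phi_3(X,X^p)$ vanishes at $\overline{x_0}$, by Proposition \ref{3p_Kronecker}. So I will not count orders of vanishing. Instead I will show directly that there are two distinct degree-$3$ cyclic isogenies from $E^{(p)}$ to $E$, where $E$ has $j(E)=\overline{x_0}$. Equivalently, $\Phi_3(X,Y)$ has $X=\overline{x_0}$ as a multiple root when $Y=\overline{x_0}^p$, which is exactly the condition $\frac{\partial}{\partial X}\Phi_3(X,Y)|_{Y=X^p}=0$ used in the proof of Proposition \ref{doublerootofphi3}.

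First I use Deuring lifting, as in Theorem \ref{important}, to get two endomorphisms of $E$. Reducing a CM curve with $j$-invariant a root of $P_{3p}$, which lands on $E$ modulo a prime $\mathfrak{B}$, gives an embedding $\bZ[\frac{1+\sqrt{-3p}}{2}]\hookrightarrow \mathrm{End}(E)$. Reducing one with $j$-invariant a root of $P_{12p}$ gives $\bZ[\sqrt{-3p}]\hookrightarrow\mathrm{End}(E)$. Let $\phi_1$ and $\phi_2$ be the images of $\sqrt{-3p}$. The key claim is that we may choose the second embedding so that $\phi_2\neq\pm\phi_1$. The first element $\phi_1=2\cdot\frac{1+\sqrt{-3p}}{2}-1$ acts trivially on $E[2]$. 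For the second, the intended point is that the reduction of a curve with endomorphism ring exactly $O_{12p}$ carries a $\sqrt{-3p}$ that is not $\equiv 1 \bmod 2\,\mathrm{End}$, i.e.\ it does not act trivially on $E[2]$.

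I expect this to be the main obstacle, since the optimal embedding need not survive reduction. I would first argue on $2$-adic Tate modules: $\mathrm{End}(E)\otimes\bZ_2\cong M_2(\bZ_2)$ because $p\ne 2$, and the embedding of $O_{12p}\otimes\bZ_2$ induced from characteristic $0$ is optimal, since reduction is injective on $T_2$. Hence $\sqrt{-3p}$ does not act trivially on $E[2]$. Then $\phi_1$ acts trivially on $E[2]$ and $\pm\phi_2$ does not, so $\phi_2\ne\pm\phi_1$.

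Finally I translate back to isogenies. By Lemma \ref{compof-3pmap}, each $\phi_k$ factors as $\nu_k\circ F$ with $\nu_k:E^{(p)}\to E$ separable of degree $3$. Both $\phi_k$ have degree $3p$, so $\phi_k\ne\pm\phi_1$ also excludes $\phi_2=\phi_1\circ u$ for an automorphism $u$ of order $4$ or $6$. Such a $u$ would exist only when $j(E)=1728$ or $0$, and I handle it separately: then $\phi_2=\phi_1 u$ would have trace $\ne 0$, contradicting $\phi_2^2=-3p$. So $\ker\nu_1\ne\ker\nu_2$, since equal kernels would force $\nu_2=\alpha\nu_1$ with $\alpha\in\Aut(E)$. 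This gives two distinct cyclic $3$-subgroups of $E^{(p)}$ with quotient $E$, so $\Phi_3(\overline{x_0}^p,Y)$ has $Y=\overline{x_0}$ with multiplicity at least $2$, counted by subgroups modulo automorphisms. By the symmetry of $\Phi_3$ this means $\overline{x_0}$ is a double root of $\Phi_3(X,\overline{x_0}^p)$. That is precisely the vanishing of $\Phi_3$ and of its $X$-derivative at $(\overline{x_0},\overline{x_0}^p)$, which by the computation in Proposition \ref{doublerootofphi3} says $\overline{x_0}$ is a double root of $\Phi_3(X,X^p)\bmod p$.
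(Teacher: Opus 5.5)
Your architecture is the paper's. You reduce CM curves for $O_{3p}$ and $O_{12p}$ to the same $E$, and separate the two images of $\sqrt{-3p}$ by their action on $E[2]$. Your Tate-module argument is a repackaging of the paper's observation that the Legendre $2$-torsion points stay distinct modulo $\mathfrak{B}$. You then factor both through Frobenius via Lemma~\ref{compof-3pmap} and conclude from two distinct kernels.

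\textbf{The gap.} The gap is in excluding $\phi_2=u\phi_1$ with $u\in\Aut(E)\smallsetminus\{\pm1\}$. You assert that $u\phi_1$ would have nonzero trace, but this is not automatic. Write $u=\frac{t}{2}+u_0$ with $u_0$ of trace zero. Then $\operatorname{tr}(u\phi_1)=u_0\phi_1+\phi_1u_0$, which vanishes exactly when $u_0$ anticommutes with $\phi_1$. In that case $(u\phi_1)^2=-\deg(u\phi_1)=-3p$, which is precisely the situation you need to rule out. Such anticommuting pairs do occur: for $p\equiv 3\bmod 4$, the $p$-power Frobenius on $y^2=x^3-x$ anticommutes with $[\sqrt{-1}]$. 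So degree and trace bookkeeping alone cannot exclude coincident kernels when $j(E)\in\{0,1728\}$.

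\textbf{How to repair it.}
\begin{itemize}
\item The case $j=1728$ is vacuous. That curve is ordinary for $p\equiv 1\bmod 4$, while $E$ is supersingular: an ordinary curve cannot carry an endomorphism with square $-3p$, since $p$ would have to split in $\bQ(\sqrt{-3p})$.
\item For $j=0$, use $2$-torsion once more. An automorphism of order $3$ or $6$ permutes the three nonzero points of $E[2]$ cyclically. Since $\phi_1$ is trivial on $E[2]$, $(u\phi_1)^2$ acts on $E[2]$ as $u^2\neq\mathrm{id}$. But $\phi_2^2=[-3p]$ acts trivially on $E[2]$, a contradiction.
\end{itemize}

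\textbf{How the paper does this step.} It identifies $E$ with $E_{\Lm^{-}(\lm)}$ via Proposition~\ref{lamdadekakeruyo}. It then enumerates how an automorphism could permute $(0,0),(1,0),(\Lm^{-}(\lm),0)$. It rules out $j=1728$ using $\Lm^{-}(\lm)\notin\fp$, and $j=0$ using Proposition~\ref{whenfp}(a). Your repaired argument does not need that explicit model.

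\textbf{A minor point.} The multiplicity of $\overline{x_0}$ as a root of $\Phi_3(\overline{x_0}^p,Y)$ counts cyclic subgroups themselves, not orbits under automorphisms. Two distinct kernels suffice either way.
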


\begin{proof}
Let $K$ be a splitting field of $P_{3p}(X)$ and $P_{12p}(X)$ and $R$ a ring of integers of $K$. The assumption that $\overline{x_0}$ is the root of $P_{3p}(X) \bmod p $ and $P_{12p}(X)\bmod p$ implies that there exists $x_1,x_2 \in R$ satisfying $P_{3p}(x_1)=0$ and $P_{12p}(x_2)=0$ and a prime ideal $\mathfrak{B}\subset R$ lying above $p$ such that $x_1\equiv x_2\equiv\overline{x_0} \bmod \mathfrak{B}$. Firstly we check that the elliptic curve with $j$-invariant $\overline{x_0}$ has complex multiplication by $O_{3p}$ and $O_{12p}$. Replacing $K$ by a finite extension if necessary, let
\begin{eqnarray*}
    E_i:y^2=x(x-1)(x-t_i) 
\end{eqnarray*}
be elliptic curves with $t_i \in R$ such that $j(E_i)=x_i$ for $i=1,2$. 
Then $E_1$ has an endomorphism $(1+\alpha)/2$, where $\alpha$ is an endomorphism corresponding to $\sqrt{-3p}$ and preserving all $2$-torsions $E_1[2]$. In addition, $E_2$ has an endomorphism $\beta$ which corresponds to $\sqrt{-3p}$ and does not preserve the $2$-torsions $E_2[2]$. 
We set $S_{i1}:=(0,0),\  S_{i2}:=(1,0),\ S_{i3}:=(t_i,0)$ for $i=1,2$.
With this notation, We may assume that $\beta(S_{2l})=S_{2m}$ with some $l,m\in\{1,2,3\}$ such that $l\ne m$. Let $\tilde{E_i}:y^2=x(x-1)(x-\overline{t_i})$ where $\overline{t_i}:= t_i \bmod \mathfrak{B}$. We denote $v$ by the valuation of $K$ defined by $R_{\mathfrak{B}}$. Since \textcolor{black}{the} $j$-invariants of $E_i$
\[
j(E_{i})=2^8\frac{(t_i^2-t_i+1)^3}{t_i^2(t_i-1)^2}
\]
reduce to $\overline{x_0} \bmod \mathfrak{B}$, we see that $v(j(E_i))\ge0$. In addition, since $(t_i^2-t_i+1)$ and $t_i(t_i-1)$ are coprime as polynomials in $t_i$, they cannot both belong to $\mathfrak{B}$.  Thus we find that $v(t_i)=0,v(t_i-1)=0$ and in particular, we have $\overline{S_{2l}}\ne\overline{S_{2m}}$.
Let $\overline{\alpha}:=\alpha\bmod \mathfrak{B}$ and $\overline{\beta}:=\beta\bmod \mathfrak{B}$. Then the endomorphism $\overline{\alpha}$ preserves all $2$-torsion points of $\tilde{E}$ but $\beta$, because $\overline{S_{2l}} \ne \overline{S_{2m}}$.   
This implies that $\overline{\alpha}$ and $\overline{\beta}$ give complex multiplication by $O_{3p}$ and $O_{12p}$ respectively.

     Since $\overline{x_0}$ is the root of $P_{3p}(X) \bmod p$, by Proposition \ref{lamdadekakeruyo}, we may assume that the elliptic curve with $j$-invariant $\overline{x_0}$ has a form $E_{\Lm^{-}(\lm)}$ with some $\lm\in\fp$. In addition, this elliptic curve is supersingular and then $\sqrt{\lm^2-\lm+1} \in \fpp\smallsetminus\fp$.
    From Lemma \ref{compof-3pmap}, a map of degree $3p$ is the composition of the Frobenius map and \textcolor{black}{a} degree-$3$ isogeny. So we write the map $\overline{\alpha}$ as in the proof in Lemma \ref{compof-3pmap}, that is;  $\overline{\alpha}=\nu \circ F$ where $\text{Ker}(\nu)=\{O,Q,2Q\}$ and $Q$ is an order-$3$ point of $\tilde{E}^{(p)}$. Since $\tilde{E}^{(p)}/\{O,Q,2Q\} \cong \tilde{E}$ and $p\ne 3$, Igusa \cite{Igusa_ell} tells us that the $j$ invariant $\overline{x_0}$ is the root of $\Phi_3(X,X^p) \bmod p$. 
    
    If $\overline{\beta}$ is also obtained by the composition of the Frobenius map and the degree-$3$ map $\nu$, then we can write $\overline{\beta}=\pi\circ \nu \circ F$ with some isomorphism $\pi:\tilde{E}\rightarrow\tilde{E}$. Since $\nu \circ F$ fixes all $2$-torsion points while $\overline{\beta}$ does not, the isomorphism $\pi$ does not act on $\tilde{E}[2]$ trivially. Our aim is to show that such an isomorphism does not exist. We write the isomorphism $\pi$ as 
    \[
    x=u^2x'+r, \quad y=u^3y'
    \]
    with some $u\in {\overline{\fp}}^{*} ,r\in {\overline{\fp}}$. We set $\overline{S_1}:=(0,0),\overline{S_2}:=(1,0),\overline{S_3}:=(\Lm^{-}(\lm),0) $ 
    The two torsion points can be mapped as follows:
    \begin{eqnarray*}
&\rm{(a)}:& \overline{S_1} \mapsto \overline{S_1},\ \overline{S_2} \mapsto \overline{S_3},\ \overline{S_3} \mapsto \overline{S_2}\\
&\rm{(b)}:& \overline{S_1} \mapsto \overline{S_3},\ \overline{S_2} \mapsto \overline{S_2},\ \overline{S_3} \mapsto \overline{S_1}\\
 &\rm{(c)}:& \overline{S_1} \mapsto \overline{S_2},\ \overline{S_2} \mapsto \overline{S_1},\ \overline{S_3} \mapsto \overline{S_3}\\
 &\rm{(d)}:&\overline{S_1} \mapsto \overline{S_2},\ \overline{S_2} \mapsto \overline{S_3},\ \overline{S_3} \mapsto \overline{S_1}\\ 
 &\rm{(e)}:& \overline{S_1} \mapsto \overline{S_3},\ \overline{S_2} \mapsto \overline{S_1},\ \overline{S_3} \mapsto \overline{S_2}
    \end{eqnarray*}
    If the case $\rm{(a)}$, then the relations $r=0,\ 1=u^2\Lm^{-}(\lm)+r$ and $\Lm^{-}(\lm)=u^2+r$ hold. Then $\Lm=\pm1$, but this contradicts the fact that $\Lm^{-}(\lm) \notin\fp$.
    If the case $\rm{(b)}$, then the relations $0=u^2\Lm^{-}(\lm)+r,\ 1=u^2+r$ and $\Lm^{-}(\lm)=r$ hold. Then $\Lm^{-}(\lm)(\Lm^{-}(\lm)-2)=0$, which is a contradiction. If the case $\rm{(c)}$, then the relations $0=u^2+r,\ 1=r$ and $\Lm^{-}(\lm)=u^2\Lm^{-}(\lm)+r$ hold. Then $\Lm^{-}(\lm)=1/2$, which is a  contradiction. If the case $\rm{(d)}$, then the relations $0=u^2+r,\ 1=u^2\Lm^{-}(\lm)+r$ and $\Lm^{-}(\lm)=r$ hold. Then we have $\Lm^{-}(\lm)^2-\Lm^{-}(\lm)+1=0$.
    In this case, we have $j(\tilde{E})=0$.  However, this contradicts Proposition \ref{whenfp} (a). The case $\rm{(e)}$ is the inverse version of $\rm{(d)}$, so this case does not occur. 

    Thus, $\overline{\beta}$ is the composition of the Frobenius map and another degree-$3$ map that has the kernel different from that of $\nu$, say $\{O,Q',2Q'\}$.  Then $Q'\ne Q$. Again we have $\tilde{E}^{(p)}/\{O,Q',2Q'\}\cong \tilde{E}$, so the $j$-invariant $\overline{x_0}$ is the double root of $\Phi_3(X,X^p) \bmod p$.
\end{proof}

From the discussion above, we only have to analyze the roots of $P_3(X)$, $P_{4}(X)$, $P_8(X)$, $P_{11}(X)$, $P_{20}(X)$, $P_{32}(X)$ and $P_{35}(X)$ mod $p$. Furthermore, we know that an $\fp$-root of $P_{3p}(X) \bmod p$ should be $8000\bmod p$ or $54000\bmod p$ from Proposition \ref{whenfp}, so we only have to observe the roots of  $P_8(X)$, $P_{20}(X)$, $P_{32}(X)$ and $P_{35}(X) $ mod $p$. 

\begin{prop}
\label{multiplicity}
    Let $P_{3p}(X)$ be the Hilbert polynomial of level $3p$ for $p \equiv1 \bmod 4$. Then
    \begin{enumerate}
        \item [\rm{(a)}] $P_{3p}(X) \bmod p$ has the root $8000 \bmod p $ if and only if $p \equiv5 \bmod 8$. In addition, the root $8000 \bmod p$ of $P_{3p}(X) \bmod p$ has multiplicity $2$ when $p=5$, and multiplicity $4$ otherwise.
\item[\rm{(b)}] For $D=20,35,32$, assume that $P_{D}(X) \bmod p$ is irreducible over $\fp$ and that an element $\gamma\in \overline{\fp}$ is the root of $P_{D}(X) \bmod p$ and $P_{3p}(X) \bmod p$.  Then the root $\gamma$ of $P_{3p}(X) \bmod p$ has multiplicity $2$.
    \end{enumerate}
\end{prop}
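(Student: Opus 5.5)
The plan is to read both parts off Theorem \ref{thm:GZ_for_3p}, in the form already computed in Proposition \ref{GZmultiplicity}. The needed observation is that $P_8(X)$ and $P_{20}(X)$, $P_{35}(X)$, $P_{32}(X)$ have only simple roots modulo $p$ in the relevant cases. Granting this, $m_j$ equals the multiplicity of $j$ as a root of $P_{3p}(X) \bmod p$.

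\textbf{Part (a).} Since $P_8(X)=X-8000$ is linear, $m_{8000}$ is exactly the multiplicity of $8000$ in $P_{3p}(X) \bmod p$.
\begin{itemize}
\item Proposition \ref{GZmultiplicity}(1) gives multiplicity $4$ for $p\equiv 5 \bmod 8$ with $p\neq 5$, and $2$ for $p=5$.
\item Since $p\equiv 1\bmod 4$, the only other residue class is $p\equiv 1\bmod 8$. There $m_{8000}=0$, so $8000$ is not a root.
\end{itemize}
This proves the ``if and only if'' and the stated multiplicities.

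Before applying Theorem \ref{thm:GZ_for_3p}, I check its hypotheses. The discriminants $-8$ and $-3p$ are fundamental, because $3p\equiv 3 \bmod 4$. They are coprime and $p\mid 3p$. When $8000$ is a root we have $p\equiv 5\bmod 8$, so $\left(\frac{-8}{p}\right)=\left(\frac{-2}{p}\right)=-1$. If instead $\left(\frac{-8}{p}\right)=1$, the reduction of the CM curve with $j=8000$ is ordinary. It then cannot share a $j$-invariant with the supersingular roots of $P_{3p}(X) \bmod p$, so the conclusion $m_{8000}=0$ holds directly. The case $p=5$ is covered by Proposition \ref{GZmultiplicity}(1) as stated.

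\textbf{Part (b).} Fix $D\in\{20,35,32\}$ with $P_D(X)$ irreducible modulo $p$.
\begin{itemize}
\item Irreducibility over $\fp$ makes $P_D(X) \bmod p$ separable, since $\fp$ is perfect. Its multiplicity at $\gamma$ is therefore $1$, and $m_\gamma$ equals the multiplicity of $\gamma$ in $P_{3p}(X) \bmod p$.
\item The roots of $P_D(X) \bmod p$ are Galois conjugate over $\fp$. Because $P_{3p}(X)$ has coefficients in $\bZ$, Frobenius permutes these roots and preserves their multiplicities in $P_{3p}(X) \bmod p$. So every root of $P_D(X) \bmod p$ has the same multiplicity $m$ in $P_{3p}(X) \bmod p$.
\item Because $\gamma$ is a common root of $P_D(X)$ and $P_{3p}(X)$ modulo $p$, $E_\gamma$ is supersingular. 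Hence $\left(\frac{-D}{p}\right)=-1$ when $p\nmid D$. The degenerate cases such as $p=5$, $D=20$ or $D=35$ are finite and already handled in Proposition \ref{GZmultiplicity}.
\end{itemize}

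To apply Theorem \ref{thm:GZ_for_3p} I need fundamental discriminants. For $D=20$ and $D=35$ they are. $D=32$ is not fundamental: $-32=4\cdot(-8)$. For it I would either use a variant of the Gross--Zagier count for the order of conductor $2$, or argue directly through the $2$-torsion structure.

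Assuming the formula applies, the left side of \eqref{eq:GZ_for_3p} becomes
\[
\tfrac12\cdot h(-D)\cdot m\cdot 2 = h(-D)\,m .
\]
This uses $\#\Aut(E_\gamma)=2$, since $j\ne 0,1728$ for $p>5$. The right side is $\ord_p F\bigl(\tfrac{3pD}{4}\bigr)$ from the $x=0$ term, plus the terms with $x\neq 0$, which are finitely many small-$p$ exceptions. For $D=20$ this is the computation $F(15p)=4$, $h=2$, $m=2$ in Proposition \ref{GZmultiplicity}(2). For $D=35$ it is part (3), where the relevant $m$ is again $2$.

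\textbf{Main obstacle.} The hard step is reconciling the exceptional primes, such as $p=13$ and $p=61$, where the Gross--Zagier value gives $m_j=4$. In those cases the corresponding $P_D(X)$ must split modulo $p$, or two of the listed discriminants share a root. I would check that irreducibility of $P_D(X) \bmod p$ excludes exactly these primes; for instance, at $p=13$ we have $\left(\frac{5}{13}\right)=-1$, so this needs care. I would also handle $D=32$, where I expect to reduce to $D=8$ via the $2$-isogeny relating orders of conductor $1$ and $2$.
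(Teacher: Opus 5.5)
Your treatment of (a) and of $D=20,35$ in (b) matches the paper's: everything is read off Proposition \ref{GZmultiplicity}. Your ordinary-reduction remark for $p\equiv 1 \bmod 8$ is a useful addition, since it covers the case where Theorem \ref{thm:GZ_for_3p} does not literally apply. The ``main obstacle'' you raise is already settled by your own separability bullet. The discriminant $2^{18}5^3 13^2 17^2$ of $P_{20}(X)$ vanishes mod $13$, so $P_{20}(X)\equiv (X+8)^2 \pmod{13}$ is not irreducible despite $\left(\frac{5}{13}\right)=-1$. Likewise $P_{35}(X)$ splits mod $61$. The irreducibility hypothesis therefore removes exactly these primes. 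One correction: for $D=35$ there is no $x=0$ term, because $105p\equiv 1 \bmod 4$ forces $x$ odd. A nonzero contribution needs $x=py$ with $py^2<105$, which is why Proposition \ref{GZmultiplicity}(3) is a finite check.

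The genuine gap is $D=32$, which you leave as two options without carrying either out. It is the longest part of the paper's proof, and no Gross--Zagier formula for the order of conductor $2$ is available. What is missing is a counting argument via $2$-isogenies, which goes as follows.

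Since $\Phi_2(X,8000)=P_8(X)P_{32}(X)$, the $2$-neighbours of $j=8000$ are $8000$ itself and the two roots of $P_{32}$. By (a) there are exactly four roots $\alpha_1,\dots,\alpha_4$ of $P_{3p}(X)$ lying over $8000$.

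On $E_{\alpha_i}$, put $\theta=(1+\sqrt{-3p})/2$. Both $\theta$ and $\theta-1$ have degree $(3p+1)/4$, which is even because $p\equiv 5 \bmod 8$. So $\ker\theta$ and $\ker(\theta-1)$ each contain exactly one nonzero $2$-torsion point, say $Q_1\neq Q_2$. Then $\theta$ stabilises $\langle Q_1\rangle$ and $\langle Q_2\rangle$ but not $\langle Q_1+Q_2\rangle$. Hence two of the $2$-neighbours have CM by $O_{3p}$ and the third only by $O_{12p}$.

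Lemma \ref{p12and8000} says no root of $P_{12p}$ reduces to $8000$. It follows that each $\alpha_i$ has exactly one neighbour $\gamma_i$ that is a root of $P_{3p}$ reducing to a root of $P_{32}$.

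The map $\alpha_i\mapsto\gamma_i$ is then a bijection onto all such roots. It is injective because $8000$ is a simple root of $\Phi_2(X,\overline{\gamma_i}) \bmod p$ for $p\neq 5,13$. It is surjective because any such $\gamma$ has a $2$-neighbour $\alpha$ over $8000$, and Lemma \ref{p12and8000} again forces the CM order of $\alpha$ to be $O_{3p}$ rather than $O_{12p}$.

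So exactly four roots of $P_{3p}(X)$ reduce into the two conjugate roots of $P_{32}(X) \bmod p$, and each of those has multiplicity $2$. Without this count, or a substitute for it, the case $D=32$ of (b) is unproved.
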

\begin{proof}
(a) Follows from Proposition \ref{GZmultiplicity} (1).

(b) $D=20$: For $p=13$, the Hilbert class polynomial $P_{20}(X)$ mod $p$ factors into $(X+8)^2$ (see \cite[14]{Github}), so we may assume that $p \ne 13$. The claim then follows from Proposition \ref{GZmultiplicity} (2).

$D=35$: For $p=61$, the Hilbert class polynomial $P_{35}(X)$ mod $p$ factors into $(X+20)(X+52)$ (see \cite[14]{Github}), so we may assume that $p \ne 61$. The claim then follows from Proposition \ref{GZmultiplicity} (3).

$D=32:$ To prove the case of $D=32$, we make one-to-one correspondence between the roots of $P_{3p}(X)$ that are the roots of $P_8(X) \bmod p$ and the roots of $P_{3p}(X)$ that are the roots of $P_{32}(X) \bmod p$. By a direct computation, we have
    \begin{align}
    \label{caseof8000}
        \Phi_2(X,8000) = P_8(X)P_{32}(X).
    \end{align}
This means that elliptic curves whose $j$-invariants are roots of $P_{32}(X)$
are obtained by $2$-isogenies from  elliptic curves whose $j$-invariants are roots of $P_{8}(X)$. We study more on this construction. 

Let $K$ be a splitting field of $P_{3p}(X)$ and $R$ a ring of integers of $K$.  Assume that $P_8(X) \bmod p$ has a common root with $P_{3p}(X) \bmod p$, or equivalently, $p\equiv5 \bmod 8$. If $p>5$, we know that the root $8000 \bmod p$ of $P_{3p}(X) \bmod p$ has multiplicity $4$. Then Deuring's lifting Lemma tells us that there are four roots $\alpha_1,\alpha_2,\alpha_3,\alpha_4\in R$ of $P_{3p}(X)$ and a prime ideal $\mathfrak{B}$ of $R$ lying above $p$ satisfying $\alpha_i \equiv 8000 \bmod \mathfrak{B}$ for $i=1,2,3,4$. Let $E_i$ for $i=1,\ldots, 4$ be an elliptic curve over $\bC$ with $j$-invariant $\alpha_i$. We consider degree-$2$ isogenies from these elliptic curves.  Replacing $K$ by a finite extension if necessary, let $\gamma_{i1},\gamma_{i2},\gamma_{i3}\in R$ be the roots of $\Phi_2(X,\alpha_i)$. From equation (\ref{caseof8000}), we may assume that $\gamma_{i1} \equiv8000 \bmod \mathfrak{B} $ and that $\gamma_{i2} \bmod\mathfrak{B} $ and $\gamma_{i3}  \bmod \mathfrak{B}$ are distinct roots of $P_{32}(X) \bmod p$. 

Since $\alpha_i$ is a root of $P_{3p}(X)$, the elliptic curve $E_i$ has an endomorphism $\theta_i$ corresponding to $(1+\sqrt{-3p})/2$. We observe when this endomorphism $\theta_i$ descends via degree-$2$ isogeny. Let $Q_{i1},Q_{i2},Q_{i3}$ be nontrivial $2$-torsion points of $E_i$ and $\pi_{ij}$ the $2$-isogeny from $E_i$ whose kernel is generated by $Q_{ij}$ for $j=1,2,3$. We let $Q_{i}$ and $\pi_i$ represent any of $Q_{ij}$ and $\pi_{ij}$ for $j=1,2,3$.  We set $\iota_i:=\theta_i\circ [2]$.  The map $\theta_i$ descends to an endomorphism of the quotient curve $E_i/\{Q_i,O\}$ if and only if $\theta_i(Q_i)\subset\{Q_i,O\}$. 
\[
\begin{tikzcd}
 E_{i} \arrow[r, "\theta_i"] \arrow[d,"\pi_{i}"] & E_{i} \arrow[d,"\pi_{i}"] \\
E_{i} / \{O,Q_i\} \arrow[r, dashed]&E_{i} / \{O,Q_i\}
\end{tikzcd}
\]
We choose a point $S_i\in E_i$ that satisfies $2S_i=Q_i$ and we have $\theta_i(Q_i)=\iota_i( S_i)$.  Then $\theta_i(Q_i)\subset\{Q_i,O\}$ occurs precisely when $\iota_i (S_i)=0$ or $(\iota_i-[2])(S_i)=0$. By the assumption that $p \equiv1 \bmod 4$, we write $p=4m+1$ with some integer $m >1$. Then we have 
\[
\#\textrm{Ker}(\iota_i)=\#\textrm{Ker}(\iota_i-[2])=3p+1=4(3m+1).
\]
Thus, $\textrm{Ker}(\iota_i)$ and $\textrm{Ker}(\iota_i-[2])$ have at least one point of order $4$ respectively, say $S_{i1}$ and $S_{i2}$. In addition, we see that $S_{i1}$ is not in the subgroup generated by the point $S_{i2}$ and that $S_{i1},$ and $S_{i2}$ generate $E_i[4]$ because $2S_{i1},2S_{i2}\ne O$. We may assume that $Q_{i1}=2S_{i1},Q_{i2}=2S_{i2}, Q_{i3}=2(S_{i1}+S_{i2})$. Then we have 
\[
\theta_i(Q_{i1})=O,\ \theta_i(Q_{i2})=Q_{i2},\  \theta_i(Q_{i3})=Q_{i2}\ne Q_{i3}.
\]
Hence we find that the endomorphism $\theta_i$ descends via the two degree-$2$ isogenies out of three. Let $\tilde{E_{ij}}:=E_i/\textcolor{black}{\langle Q_{ij}\rangle}$ for $j=1,2,3$, where $\textcolor{black}{\langle Q_{ij}\rangle}$ is the group generated by $Q_{ij}$. Since $Q_{i}\in \textrm{Ker}(\iota_i)$, there exists an endomorphism $\tilde{\iota}_{ij}$ of $\tilde{E_{ij}}$ satisfying $\tilde{\iota}_{ij}\circ \pi_{ij}=\pi_{ij} \circ \iota_i$. From the discussion above, we see that $\bZ[(1+\tilde{\iota_{ij}})/2]$ provides complex multiplication by $\bZ[(1+\sqrt{-3p})/2]$ for $\tilde{E_{i1}}$ and $\tilde{E_{i2}}$ and therefore $j(\tilde{E_{i1}})$ and $j(\tilde{E_{i2}})$ are the root of $P_{3p}(X)$. For $\tilde{E_{i3}}$, we see that $\tilde{E_{i3}}[2] \nsubseteq \textrm{Ker}(\tilde{\iota_{i3}})$, which means that the endomorphism ring of $\tilde{E_{i3}}$ contains $\tilde{\iota_{i3}}$ but $\iota_{i3}/2$.  That is, the subring $\bZ[\tilde{\iota_{i3}}]\subseteq \textrm{End}(\tilde{E_{i3}})$ provides complex multiplication by $\bZ[\sqrt{-3p}]$.  This implies that the $j$-invariant of $\tilde{E_{i3}}$ is the root of $P_{12p}(X)$. By the next Lemma \ref{p12and8000}, we see that $j(\tilde{E_{i3}}) \bmod p$ cannot be $8000 \bmod p$. (For $p=5,13$, the polynomial $P_{32}(X) \bmod p$ factors in $\fp$.) Thus, we may assume that $j(\tilde{E_{i1}})=\gamma_{i1},j(\tilde{E_{i2}})=\gamma_{i2},j(\tilde{E_{i3}})=\gamma_{i3}$.  

Now we define a map from the set $
\{\alpha_1,\alpha_2,\alpha_3,\alpha_4\}$ to the set 
\[
\{\gamma \in R \mid P_{3p}(\gamma)=0 \textrm{ and } P_{32}(\overline{\gamma})\equiv0 \bmod p\}
\]
sending $\alpha_i$ to $\gamma_{i2}$. 
Our aim is to show that this map is bijective. Firstly, we show that this map is injective. Consider the polynomial $\Phi_2(X,\gamma_{i2})$. Then it factors as follows with some $\delta_1,\delta_2\in R$.
\[
\Phi_2(X,\gamma_{i2})=(X-\alpha_{i})(X-\delta_{i1})(X-\delta_{i2}).
\]
 If reduced mod $\mathfrak{B}$, then we have
 \[
 \Phi_2(X,\overline{\gamma_{i2}})=(X-8000)Q(X)
 \]
 where $\overline{\gamma_{i2}}:=\gamma_{i2} \bmod \mathfrak{B}$ and $Q(X)$ is a polynomial of degree $2$ in $\fp[X]$. We check by a direct calculation (see \cite[15]{Github}) that $Q(X) \bmod p$ does not have the root $8000 \bmod p$ for $p \ne5,13$ and therefore ${\delta_{ij}} \not \equiv 8000 \bmod \mathfrak{B}$. Hence, the map is injective. Next, we show that the map is surjective. Let $\gamma\in R$ be the root of $P_{3p}(X)$ that satisfies $P_{32}(\gamma)\equiv 0 \bmod p$. we consider the equation $\Phi_2(X,\gamma)=0$. From equation (\ref{caseof8000}), again replacing $K$ by a finite extension if necessary, we see that there is an element $\alpha\in R$ that satisfies $\Phi_2(\alpha,\gamma)=0$ and $\alpha\equiv8000 \bmod \mathfrak{B}$. Let $E$ and $\tilde{E}$ be elliptic curves with $j$-invariant $\gamma$ and $\alpha$ respectively. We note that there is a degree-$2$ isogeny between $E$ and $\tilde{E}$.  The equation $P_{3p}(\gamma)=0$ means that $\textrm{End}(E)$ contains an endomorphism $\theta$ corresponding to $(1+\sqrt{-3p})/2$. The degree-$2$ isogeny between $E$ and $\tilde{E}$ induces an endomorphism $\tilde{\theta}$ from $\tilde{E}$ to itself providing complex multiplication $\bZ[(1+\sqrt{-3p})/2]$ or  $\bZ[\sqrt{-3p}]$. However, the latter case contradicts the next Lemma \ref{p12and8000}. Hence, we see that $P_{3p}(\alpha)=0$ and that $\alpha$ is one of $\alpha_1,\ldots,\alpha_4$. 

 Now we find that there are four elements in $R$ that are the roots of $P_{3p}(X)$ and the roots of $P_{32}(X) \bmod p$. Since $P_{3p}(X) \bmod p$ has coefficients in $\fp$ and $P_{32}(X)\bmod p$ is an irreducible polynomial of degree $2$ over $\fp$, each distinct root of $P_{32}(X)\bmod p$ has multiplicity $2$ as a root of $P_{3p}(X) \bmod p$.
\end{proof}
\begin{lem}
\label{p12and8000}
    Let $p>13$ be a prime such that $p\equiv 1\bmod 4$. Then $P_{12p}(X)$ does not have the root $8000 \bmod p$
\end{lem}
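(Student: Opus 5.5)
Throughout, ``$8000$ is a root of $P_{12p}(X)\bmod p$'' means that some root of $P_{12p}(X)$ reduces to $8000$ modulo a prime above $p$. I will show this never happens for $p>13$, $p\equiv 1\bmod 4$, by the same Gross--Zagier computation used for Proposition \ref{GZmultiplicity} (1). The difference is that now $D_2=12p$ instead of $3p$.

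First I check the hypotheses of Theorem \ref{thm:GZ_for_3p} with $D_1=8$ and $D_2=12p$. Both $-8$ and $-12p$ are fundamental discriminants: $12p=4\cdot 3p$ with $3p\equiv 3\bmod 4$, so $-12p$ is fundamental. They are not coprime, since both are even. So the formula as stated in \S\ref{subsec:GZ} does not literally apply, and there are two ways around this.

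\emph{Option (i).} Use the general Gross--Zagier/Lauter--Viray extension for non-coprime discriminants. This is heavier machinery.

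\emph{Option (ii), preferred.} Avoid the gcd issue by passing to $D_2=3p$. Suppose $E$ over $\overline{\mathbb{F}}_p$ has $j=8000$ and is the reduction of a curve with CM by $O_{12p}$. Then $E$ has an endomorphism $\beta$ with $\beta^2=-3p$. By Lemma \ref{compof-3pmap} (whose proof uses only $\deg=3p$), $\beta=\nu\circ F$ with $\nu$ of degree $3$, so $8000$ is a root of $\Phi_3(X,X^p)$.

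With option (ii), the key step is to determine whether $\beta$ fixes $E[2]$ pointwise. If it does, $E$ has CM by $O_{3p}$ as well. Then $8000$ is a root of both $P_{3p}$ and $P_{12p}$ modulo $p$, and Proposition \ref{commonof3pand12p} makes $8000$ a double root of $\Phi_3(X,X^p)$. Proposition \ref{GZmultiplicity} (1) together with Proposition \ref{Modular_Hilbert} then bounds how many roots can lie over $8000$. I will compare multiplicities: the multiplicity of $8000$ in $\Phi_{3p}(X,X)\equiv\Phi_3(X,X^p)^2$ must equal the multiplicity in $P_{3p}P_{12p}T^2$. For the left side I compute the multiplicity of $8000$ in $\Phi_3(X,X^p)$ explicitly. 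This reduces to a local computation of $\Phi_3(X,Y)$ near $(8000,8000)$: $\Phi_3(X,X)$ has $(X-8000)^2$, and Lemma \ref{resultantphi3} controls the tangency. On the right side, $P_{3p}$ already contributes $4$ when $p\equiv 5\bmod 8$. The aim is that the count leaves no room for $P_{12p}$, which gives a parity or size contradiction.

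\emph{Main obstacle.} I expect the hard part to be the exact multiplicity of $8000$ in $\Phi_3(X,X^p)\bmod p$. If this is intractable, I fall back on option (i). There I apply the Gross--Zagier sum with $D_1D_2=96p$: the relevant quantity is $\sum_{x}\ord_p F\bigl((96p-x^2)/4\bigr)$. A root over $8000$ forces a nonzero term, which needs $p\mid(96p-x^2)/4$ with odd $p$-exponent, so $x=0$ or $x=\pm 2p\cdot k$ with $4p^2k^2<96p$. For $p>13$ this leaves only small $k$, plus $x=0$, giving $m=24p$. Then I check $\varepsilon$ at $2$ and $3$. Since $3\mid 12p$, the genus character gives $\varepsilon(3)=\left(\frac{-8}{3}\right)=1$, and the exponent of $2$ in $24p$ is $3$, which is odd. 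The real issue is that $\varepsilon(2)$ is undefined in the non-coprime case, and I would need to argue that the $2$-part forces $\ord_pF=0$. The residual small primes $p\le 13$ are excluded by hypothesis, or handled by direct computation of $P_{12p}\bmod p$.
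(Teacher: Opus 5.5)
\textbf{There is a genuine gap: the step that decides the lemma is left open in both options.}

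Your option (ii) is the paper's strategy. You compare the multiplicity of $8000$ on the two sides of $\Phi_3(X,X^p)^2\equiv P_{3p}(X)P_{12p}(X)T(X)^2 \pmod p$, where $P_{3p}$ contributes $4$ by Proposition \ref{multiplicity}(a).

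What is needed is an \emph{upper} bound: $8000$ must be a root of $\Phi_3(X,X^p)\bmod p$ of multiplicity exactly $2$. Then the left side has multiplicity $4$ and nothing is left for $P_{12p}$. The tools you cite do not give this:
\begin{itemize}
\item Proposition \ref{commonof3pand12p} only gives multiplicity at least $2$, which is the wrong direction.
\item The factor $(X-8000)^2$ of $\Phi_3(X,X)$ comes from restricting to the diagonal. That restriction involves the mixed derivative $\partial_X\partial_Y\Phi_3$ and says nothing about $\partial_X^2\Phi_3$ modulo $p$.
\item Lemma \ref{resultantphi3} only shows that $\Phi_3(X,8000)$ has some repeated root, not its exact multiplicity.
\end{itemize}

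The right reduction is as follows. Since $8000\in\fp$, we have $X^p-8000=(X-8000)^p$ in $\fp[X]$. Hence $\Phi_3(X,X^p)\equiv\Phi_3(X,8000)$ modulo $(X-8000)^p$. Because $\Phi_3(X,8000)$ is a nonzero polynomial of degree $4<p$, the multiplicity in question equals that of $X=8000$ in $\Phi_3(X,8000)\bmod p$.

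Over $\Z$ this multiplicity is $2$. The kernels of $1\pm\sqrt{-2}$ give two cyclic $3$-isogenies from the curve with $j=8000$ to itself. The other two $3$-isogenous curves have CM by the order of discriminant $-72$. So the multiplicity stays $2$ modulo $p$ as long as $p\nmid\frac12\partial_X^2\Phi_3(X,8000)|_{X=8000}$. This is a finite integer check, and it is exactly the direct calculation the paper performs for $p>13$.

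When $p\equiv1\bmod 8$ the lemma is immediate: $j=8000$ is then ordinary, whereas every root of $P_{12p}\bmod p$ is supersingular.

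Your fallback option (i) does not close the gap either. Apart from the unresolved $\varepsilon(2)$ issue you mention, its premise is false. For $p\equiv1\bmod4$ we have $-3p\equiv1\bmod4$, so $-3p$ is the fundamental discriminant of $\mathbb{Q}(\sqrt{-3p})$. Thus $-12p$ is the discriminant of the non-maximal order $\Z[\sqrt{-3p}]$ of conductor $2$. Theorem \ref{thm:GZ_for_3p} therefore fails on two counts (non-fundamental and non-coprime). You would need the Lauter--Viray formula for arbitrary orders, which the proposal does not carry out.

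Completing option (ii) with the computation above gives exactly the paper's argument.
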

\begin{proof}
    We use the relation in Proposition \ref{Modular_Hilbert} and Proposition \ref{3p_Kronecker}:
    \[
    \Phi_3(X,X^p)^2\equiv P_{3p}(X)P_{12p}(X)T(X)^2 \pmod p.
    \]
    A direct calculation (see \cite[16]{Github}) 
    shows \textcolor{black}{that the integer $\left.\frac{d^i\Phi_3(X,8000)}{d^i X}\right|_{X=8000}$ is zero for $i=0, 1$ and is not a multiple of $p$ for $i = 2$.  Hence} $\Phi_3(X,X^p) \bmod p$ has the root $8000 \bmod p $ of multiplicity $2$. Furthermore, we see that $P_{3p}(X) \bmod p$ has the root $8000 \bmod p $ of multiplicity $4$ by Proposition \ref{multiplicity} (a). Thus, $P_{12p}(X)$ does not have the root $8000 \bmod p$.
\end{proof}
Now we are ready to prove Theorem \ref{factorization_P3p}.
\begin{proof}[Proof of \textcolor{black}{Theorem} \ref{factorization_P3p}]
Since every supersingular elliptic curve defined over $\overline{\fp}$ has its $j$-invariant in $\fpp$, each root of $P_{3p}(X) \bmod p$ belongs to $\fpp$.  By Theorem \ref{important} and Lemma \ref{num_of_fpp_1} (a), every root of $P_{3p}(X) \bmod p$ is of the form $j(E_{\Lm^{\varepsilon}(\lm)})$ with some $\varepsilon\in \{\pm1\}$ and $\lm\in\fp$ such that $\sqrt{\lm^2-\lm+1}\notin \fp$.  Therefore, Lemma \ref{whenfp} (a) implies that if $P_{3p}(X)\bmod p$ has an $\fp$-root then it is $8000 \bmod p $ or $54000 \bmod p$. Since $P_{8}(X)=X-8000$ and $P_{12}(X)=X-54000$, by Lemma \ref{nonquadraticp}, under the assumption that $p \equiv 1 \bmod 4$, an elliptic curve with $j$-invariant $8000 \bmod p$ is supersingular if and only if $p \equiv5\bmod 8$ and an elliptic curve with $j$-invariant $54000 \bmod p$ is supersingualar if and only if $p \equiv2\bmod 3$. The multiplicity of the root $8000 \bmod p$ in $P_{3p}(X) \bmod p$ is $4$ by Proposition \ref{multiplicity} for $p\ne 5$. Let $\alpha\in\fpp\smallsetminus\fp$ represent the root of $P_{20}(X) \bmod p$, $P_{32}(X) \bmod p$ or $P_{35}(X) \bmod p$. If $P_{3p}(X) \bmod p$ has $\alpha$ as a root, then its multiplicity is $2$ by Proposition \ref{multiplicity}.  We have already shown that the common roots of $P_{3p}(X)$ and $P_{12p}(X)$ mod $p$, and the double roots of $\Phi_3(X,X^p) \bmod p$ that are the roots of $P_{3p}(X) \bmod p$ are the roots of $P_8(X)$, $P_{20}(X)$, $P_{32}(X)$ or $P_{35}(X)$ mod $p$ by Proposition \ref{doublerootofphi3} and Proposition\ref{commonof3pand12p}.  Thus, the congruence
    \[
    \Phi_3(X,X^p) \equiv P_{3p}(X)P_{12p}(X) \prod_{p \nmid D }P_D(X)^{2e} \pmod p
    \]
    and the uniqueness of factorization imply that the multiplicity of each of the other roots in $P_{3p}(X) \bmod p$ is $2$.  Now we complete the proof.
\end{proof}

\section{Proof of Theorem \ref{thm:A} }
\label{pfofa}
In this section, we prove Theorem \ref{thm:A}
\subsection{The case of $p \equiv 1 \bmod 4 $ }

Firstly, we prove Theorem \ref{thm:A} in the case of $p \equiv1 \bmod4$.
\begin{proof}[Proof of Theorem \ref{thm:A}]
From Proposition \ref{arigataya}, it suffices to count the number of $\lm \in \fp$ such that the two elliptic curves $E_{\Lm^{-}(\lm)}$ and $E_{\Lm^{+}(\lm)}$ are supersingular. From Theorem \ref{important}, those $\lm\in \fp$ are exactly $\fp$-elements $\lm$ such that the $j$-invariants $j(E_{\Lm^{-}(\lm)})$ and $j(E_{\Lm^{+}(\lm)})$ are the roots of $P_{3p}(X) \bmod p$. Since Theorem \ref{factorization_P3p} tells us that an $\fp$-root of $P_{3p}(X) \bmod p$ is $8000 \bmod p$ or $54000 \bmod p$, let $a_1:=8000, a_2:=54000$ and $b_{3,1},b_{3,2},\ldots, b_{n,1}, b_{n,2}\in \fpp$ be distinct roots of $P_{3p}(X) \bmod p$, where $b_{i,1}$ and $b_{i,2}$ are $\fp$-conjugate for $i=3,\ldots,n$. If $P_{3p}(X) \bmod p$ does not have the root $8000 \bmod p$ or $54000 \bmod p$, let $a_1:=0$ or $a_2:=0$ for convenience.  From Lemma \ref{num_iso}, the number of $\lm \in \fp$ satisfying $j(E_{\Lm^{-}(\lm)})=j(E_{\Lm^{+}(\lm)})=a_1$ is precisely $6$, the number satisfying $j(E_{\Lm^{-}(\lm)})=j(E_{\Lm^{+}(\lm)})=a_2$ is $3$, and the number satisfying $\{j(E_{\Lm^{-}(\lm)}),j(E_{\Lm^{+}(\lm)})\}=\{b_{i,1},b_{i,2}\}$ is $6$ for $i=3, \ldots, n$. Thus, we obtain
\[
\psi_p=6\epsilon_1+3\epsilon_2+6\epsilon_3+\cdots+6\epsilon_n,
\]
where $\epsilon_i:=1$. If $a_1=0$ or $a_2=0$, $\epsilon_1:=0$ or $\epsilon_2:=0$. 
In addition, Let $m_1,m_2,\ldots, m_n$ be multiplicities of each root $a_1,a_2,b_{3,1},b_{3,2},\ldots ,b_{n,1}, b_{n,2}$. Then we have
\[
h(-3p)=m_1\epsilon_1+m_2\epsilon_2+2m_3\epsilon_3+\cdots+2m_n\epsilon_n.
\]
By Theorem \ref{factorization_P3p}, we have $m_1=4$ and $m_2,\ldots ,m_n=2$ for $p\ne 5$. Hence we obtain $\psi_p=3h(-3p)/2$ for $p \ne 5$. If $p=5$, it easy to see that $\psi_p=\#\{-1,2,1/2\}$ and $h(-3\cdot 5)=2$. Thus we obtain $\psi_p=3h(-3p)/2$.
\end{proof}

\subsection{The case of $p \equiv 4 \bmod 3 $ }
In this section, we prove Theorem \ref{thm:A} in the case of $p\equiv3 \bmod 4$. Theorem \ref{thm:A} in the case of $p \equiv7 \bmod 12$ follows from Corollary \ref{peq7mod12}, thus we may assume that $p \equiv11 \bmod 12$.

Let $x_0$ be the root of $P_p(X) \bmod p$. We recall that there is a $\lm\in \fp$ and $\varepsilon\in\{\pm1\}$ such that the $j$-invariant of $E_{\Lm^{\varepsilon}(\lm)}$  is equal to $x_0$ by Theorem \ref{important2}. In order to determine the value of $\psi_p$ for $p \equiv 11 \bmod 12$, we count the number of $\lm \in \fp$ such that $j(E_{\Lm^{\varepsilon}(\lm)})=x_0$ with some $\varepsilon\in\{\pm1\}$ for each root $x_0$ of $P_P(X) \bmod p$. Now we define a graph $G_p:=(V,E)$ together with a weight function
\[
w : E \to \mathbb{R}.
\] for each prime $p$ as follows: The vertex set $V$ consists of the roots of $P_p(X) \bmod p$. Two vertices $x_1$ and $x_2$ are joined by an edge if and only if there exists an element $\lm \in \fp$ satisfying 
\begin{eqnarray}
\label{lambdacondition}
    \{x_1,x_2\}=\{j(E_{\Lm^{-}(\lm)}),j(E_{\Lm^{+}(\lm)})\}.
\end{eqnarray}
In addition, the graphs are allowed to have self loops. The weight of the edge joining vertices $x_1$ and $x_2$ is defined to be the number of $\fp$-elements $\lambda$ satisfying the condition (\ref{lambdacondition}).


\begin{exam}
\hspace{0pt}

\begin{tikzpicture}

\node at (3,-1.5) { $p=23$};
  \node (A) at (3,-0.5) [circle,draw] {$19$};
  \node (B) at (5.5,-0.5) [circle,draw] {$3$};

  \draw (A) -- node[above] {$w=6$} (B);
  \draw (A) to [loop above] node {$w=3$} ();
  
\end{tikzpicture}
\begin{tikzpicture}
\node at (3,-1.5) { $p=47$};
  \node (A) at (3,-0.5) [circle,draw] {$44$};
  \node (B) at (5.5,-0.5) [circle,draw] {$10$};
\node (C) at (8,-0.5) [circle,draw] {$36$};

  \draw (A) -- node[above] {$w=6$} (B);
   \draw (B) -- node[above] {$w=6$} (C);
  \draw (A) to [loop above] node {$w=3$} ();
  
\end{tikzpicture}
\end{exam}
We show some properties of the graphs. 
\begin{lem}\label{propertiesofgraph}
Let $p>11$ be a prime. Then the graph $G_p$ has the following properties. 
    \begin{enumerate}
    
    \item [\rm{(a)}] All vertices other than those corresponding to $54000 \bmod p $ or $1728 \bmod p $ have degree $2$. 
        \item[\rm{(b)}] A vertex corresponding to $54000 \bmod p $ is the only vertex having a self loop. In addition, it is connected to another vertex by a single edge.
        \item [\rm{(c)}] A vertex corresponding to $1728 \bmod p $ is the only vertex having degree $1$. 
           \item [\rm{(d)}] 
           
           Assume that there is an edge jointing vertices $x_1$ and $x_2$ and that a $\mu \in\fp$ satisfies $\{x_1,x_2\}=\{j(E_{\Lm^{-}(\mu)}),j(E_{\Lm^{+}(\mu)})\}$. Then the weight of the edge is $\#[\mu]$ where $[\mu]$ is as in Lemma \ref{num_iso}. More precisely, the weight of the self loop of the vertex $54000 \bmod p$ is $3$ and weights of all the other edges are $6$.
        
    \end{enumerate}
\end{lem}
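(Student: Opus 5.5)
The plan is to count, for each vertex $x_0$ of $G_p$, the pairs $(\lm,\varepsilon)\in\fp\times\{\pm1\}$ with $j(E_{\Lm^{\varepsilon}(\lm)})=x_0$, and compare this with how many such pairs a single edge accounts for.

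\emph{Step 1: pairs per vertex.} Fix a vertex $x_0\neq 0$. It is supersingular and defined over $\fp$, as in the proof of Theorem \ref{important2}, so it has a Legendre model $E_t$ with $t\in\fp$. Since all $2$-torsion is $\fp$-rational, the whole orbit $\{t,1/t,1-t,1/(1-t),t/(t-1),(t-1)/t\}$ lies in $\fp$. For $x_0\neq 0,1728$ this orbit has $6$ distinct elements; for $x_0=1728$ it is $\{-1,2,1/2\}$, with $3$ elements. By Proposition \ref{pmod43fpcase}, every $\lm$ with $E_{\Lm^{\pm}(\lm)}$ supersingular has $\lm^2-\lm+1\in(\fp^*)^2$ and $\Lm^{\varepsilon}(\lm)\in\fp$. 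Hence the pairs $(\lm,\varepsilon)$ with $j(E_{\Lm^{\varepsilon}(\lm)})=x_0$ are exactly those with $\Lm^{\varepsilon}(\lm)$ in this orbit. By Lemma \ref{howmanylambda}, each $t$ in the orbit is attained by as many $\lm$ as $E_t[3]$ has $\fp$-rational $x$-coordinates, and $\varepsilon$ is determined by $\lm$ since $\Lm^-(\lm)\ne\Lm^+(\lm)$. By Lemma \ref{exactly2} that number is $2$. So the count is $12$ for a generic vertex and $6$ for $1728$. The vertex $j=0$ needs a separate check: since $\sqrt{-3}\notin\fp$, it has no Legendre parameter in $\fp$, so it should carry no pairs. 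I will have to reconcile this with the statement, presumably by observing that it is excluded or isolated.

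\emph{Step 2: pairs per edge.} By Lemma \ref{num_iso}, the $\lm$'s giving the same unordered pair $\{j(E_{\Lm^-(\lm)}),j(E_{\Lm^+(\lm)})\}$ form exactly one class $[\mu]$. Hence an edge has weight $\#[\mu]$, which is the first assertion of (d). The class $[\mu]$ has $6$ elements unless $\mu\in\{-1,2,1/2\}$ (size $3$) or $\mu^2-\mu+1=0$ (size $2$); the latter is excluded because $\sqrt{-3}\notin\fp$. A non-loop edge $\{x_1,x_2\}$ uses $6$ pairs $(\lm,\varepsilon)$ at each endpoint, one for each $\lm\in[\mu]$. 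A loop uses $2\#[\mu]$ pairs at its vertex, one for each $\lm$ and each $\varepsilon$.

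\emph{Step 3: loops.} By Proposition \ref{whenfp}(b), a loop can only sit at $54000$. The class $[2]=\{-1,2,1/2\}$ gives $j(E_{\Lm^{\pm}(2)})=54000$ (computed in the proof of Proposition \ref{whenfp}), so $54000$ has a loop of weight $3$. Moreover $[2]$ is the only class of size $3$, so every other edge has weight $6$, which completes (d). The loop uses $6$ of the $12$ pairs at $54000$. The remaining $6$ must form exactly one non-loop edge of weight $6$, which gives (b).

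\emph{Step 4: degrees.} Every other vertex $x_0\ne 1728,54000$ has $12=6+6$ pairs and no loop, so it lies on exactly two edges; this is (a). The vertex $1728$ has $6$ pairs, so it lies on exactly one edge, giving (c). Here I need $1728\not\equiv 54000\bmod p$, which holds because $54000-1728=2^4\cdot 3^3\cdot 11^2$ and $p>11$.

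\emph{Main obstacle.} The delicate step is the bookkeeping in Step 1. I must make sure that every solution $(\lm,\varepsilon)$ really arises from some $t$ in the $\fp$-orbit, using Proposition \ref{pmod43fpcase}, and that distinct $t$ in the orbit give disjoint sets of $\lm$. I must also handle the degenerate vertex $j=0$ consistently with how the statement is meant.
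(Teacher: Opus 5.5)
Your argument is correct, and it takes a different route from the paper.

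\textbf{What the paper does.} The paper fixes a single Legendre parameter $t$ of $x_0$ and takes the two $\lm_1,\lm_2\in\fp$ lying over it (Lemmas \ref{howmanylambda} and \ref{exactly2}). Using the equivariance tables of Lemma \ref{num_iso}, it identifies the $\lm$'s over $x_0$ as $[\lm_1]\cup[\lm_2]$. It then decides, vertex by vertex, whether $\lm_2\in[\lm_1]$:
\begin{itemize}
\item at a generic vertex, $\lm_2\in[\lm_1]$ would force $t\in[1/2]$ or $j(E_{\Lm^{-}(\lm_1)})=j(E_{\Lm^{+}(\lm_1)})$;
\item at $54000$, it computes $\Lm^{\pm}(1/2)$ and $\Lm^{\pm}(-1)$ and compares them with $\Lm^{\pm}(2)=-7\pm4\sqrt3$;
\item at $1728$, a computer-algebra identity gives $\lm_1=\lm_2/(\lm_2-1)$.
\end{itemize}

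\textbf{What you do instead.} You double count incidences $(\lm,\varepsilon)$. Applying Lemmas \ref{howmanylambda} and \ref{exactly2} to every element of the $S_3$-orbit of $t$ gives $12$ incidences at a generic vertex and at $54000$, and $6$ at $1728$. Each non-loop edge absorbs exactly $6$ incidences at each endpoint, and the unique loop absorbs $6$. This handles (a)--(c) uniformly. Both explicit computations are replaced by one observation: the Legendre orbit shrinks to $\{-1,2,1/2\}$ exactly at $1728$. You also bypass the somewhat delicate equivariance argument in (a), using only Lemma \ref{num_iso} as stated, Proposition \ref{whenfp}(b), and $\#[\mu]\in\{3,6\}$. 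In exchange, the paper's route tells you explicitly which classes meet at each vertex.

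\textbf{The bookkeeping worry.} This is harmless. A pair $(\lm,\varepsilon)$ determines $\Lm^{\varepsilon}(\lm)$, so distinct $t$ give disjoint sets of incidences. Every incidence at $x_0$ lies on an edge, because the other endpoint $j(E_{\Lm^{-\varepsilon}(\lm)})$ is a root of $P_p(X)\bmod p$ by the forward direction of Theorem \ref{important2}.

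\textbf{The vertex $j=0$.} Do not settle for ``isolated'': a vertex of degree $0$ would contradict (a). It is in fact not a vertex at all.
\begin{itemize}
\item By Theorem \ref{important2}, every root of $P_p(X)\bmod p$ equals $j(E_{\Lm^{\varepsilon}(\lm)})$ for some $\lm\in\fp$.
\item By Proposition \ref{pmod43fpcase}, $t=\Lm^{\varepsilon}(\lm)\in\fp$.
\item $j(E_t)=0$ would force $t^2-t+1=0$, which is impossible because $-3$ is a non-residue modulo $p\equiv 11 \bmod 12$.
\end{itemize}
The same observation gives the Legendre parameter $t\in\fp$ for every vertex directly, without appealing to rationality of $E[2]$.

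\textbf{Two small checks to add.} Record that $1728$ and $54000$ really are vertices. For $1728$ this follows from Proposition \ref{fac_P_p}. For $54000$ it follows from Theorem \ref{important2} with $\lm=2$, since $\sqrt3\in\fp$.
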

\begin{proof}
(a) We start with a vertex $x_0$ that is not $54000 \bmod p $ or $1728 \bmod p $.  Since $x_0$ is the root of $P_p(X) \bmod p$, we may assume that an elliptic curve with $j$-invariant $x_0$ is defined by an equation $y^2=x(x-1)(x-t)$ with some $t\in \fp$.  By Lemma \ref{howmanylambda} and Lemma \ref{exactly2}, there are exactly two distinct $\fp$-elements $\lm_1,\lm_2$ and $\varepsilon_1,\varepsilon_2 \in \{\pm1\}$ such that $\Lm^{\varepsilon_1}(\lm_1)$ and $\Lm^{\varepsilon_2}(\lm_2)$ are equal to $t$.  Furthermore, from Lemma \ref{num_iso}, the set of $\fp$-elements $\lm$ that satisfy $j(E_{\Lm^{\varepsilon}(\lm)})=x_0$ with some $\varepsilon\in\{\pm1\}$ is $[\lm_1]\cup[\lm_2]$.  In addition, any element in $[\lm_1]$ (resp. $[\lm_2]$) gives the same edge from the vertex $x_0$. Our aim is to  show that the edges given by $[\lm_1]$ and $[\lm_2]$ are distinct, which occurs if and only if $\lm_2 \notin [\lm_1]$. Assume that $\lm_2\in [\lm_1]$. From Lemma \ref{num_iso}, the assumption that $\lm_2\in [\lm_1]$ and $\Lm^{\varepsilon_1}(\lm_1)=\Lm^{\varepsilon_2}(\lm_2)$ implies that $\Lm^{\varepsilon_1}(\lm_1)\in[1/2]$ or that $j\left(E_{\Lm^{-}(\lm_1)}\right)=j\left(E_{\Lm^{+}(\lm_1)}\right)$. The former case occurs only if $x_0\equiv1728\bmod p$. The latter case occurs only if $x_0\equiv54000 \bmod p$ from Proposition \ref{whenfp}. Thus, for vertices that do not correspond to $1728 \bmod p $ or $54000 \bmod p$, each of $[\lm_1]$ and $[\lm_2]$ gives distinct edge. Hence, the vertex $x_0$ has degree $2$.

(b) We note that the elliptic curve with $j$-invariant $54000\bmod p$ is supersingular when $p \equiv 11 \bmod 12$.  We know that $j(E_{\Lm^{-}(\lm)})\equiv j(E_{\Lm^{+}(\lm)})\equiv54000$ for $\lm\in[2]$ from Proposition \ref{whenfp}, so the vertex corresponding to $54000 \bmod p$ has a self loop. Proposition \ref{whenfp} also tells us that no other vertex has a self loop. In addition, from Lemma \ref{exactly2}, we have the other $\fp$-element $\mu\ne2$ such that $\Lm^{-}(\mu)=\Lm^{-}(2)$ or $\Lm^{+}(\mu)=\Lm^{-}(2)$. In order to show that $[2]$ and $[\mu]$ give distinct edges, it suffices to show that $\mu\notin [2]$. A direct calculation shows that $\Lm^{\pm}(1/2)=(2\pm\sqrt{3})/4$ and $\Lm^{\pm}(-1)=4(2\mp\sqrt{3})$ and then we see that they do not belong to $\{\Lm^{-}(2),\Lm^{+}(2)\}=\{-7\pm4\sqrt{3}\}$ for $p\ne 11$. Thus $\mu \notin[2]$.

(c)
We note that the elliptic curve $E$ with $j$-invariant $1728 \bmod p$ is supersingular when $p\equiv 11 \bmod 12$ and it can be given by the Legendre equation $y^2=x(x-1)(x+1)$. From Lemma \ref{exactly2}, we have exactly two $\fp$-elements $\lm_1,\lm_2$ such that $\Lm^{-}(\lm_i)$ or $\Lm^{+}(\lm_i)$ is equal to $1$ for $i=1,2$. The vertex $1728 \bmod p$ has only one edge if and only if $\lm_1$ and $\lm_2$ give the same edge, which occurs precisely when $\lm_1\in[\lm_2]$. We determine $\lm_1$ and $\lm_2$ explicitly from the $3$-torsion point of $E$ using Lemma \ref{exactly2}. The $x$-coordinate of $3$-torsion points are the roots of the following division polynomial of level $3$ of $E$:
\[
3x^4-6x^2-1=3\left(x^2-\frac{3+2\sqrt{3}}{3}\right)\left(x^2-\frac{3-2\sqrt{3}}{3}\right)=0.
\]
We note that $\pm\sqrt{3} \in \fp$ when $p \equiv11 \bmod 12$ and we see that
\[
\left( \frac{(3+2\sqrt{3})}{p} \right)\left( \frac{(3-2\sqrt{3})}{p} \right)=\left( \frac{-3}{p} \right)=-1.
\]
This means that either
$(3+2\sqrt{3})/3$ or $(3-2\sqrt{3})/3$ is quadratic mod $p$ and then we find that the division polynomial has exactly two $\fp$-roots. Let $\pm a$ be the two $\fp$-roots of the division polynomial. Using Lemma \ref{exactly2}, two $\fp$-elements $\lm_1$ and $\lm_2$ are written as follows:
\begin{align*}
    \lm_1:=\frac{f(a,a(a-1)(a+1))}{g(a,a(a-1)(a+1))}, \quad \lm_2=\frac{f(-a,-a(a-1)(a+1))}{g(-a,-a(a-1)(a+1))}.
\end{align*}
We set
\begin{align*}
   && f_1(a):=f(a,a(a-1)(a+1)), \quad f_2(a):=f(-a,-a(a-1)(a+1)),\\
   && g_1(a):=g(a,a(a-1)(a+1)), \quad g_2(a):=g(-a,-a(a-1)(a+1)) .
\end{align*}
Then a direct calculation shows that (see \cite[17]{Github})
\[
f_1g_2-f_1f_2+g_1f_2=0,
\]
which suggests that $\lm_1=\lm_2/(\lm_2-1)\in[\lm_2]$. This completes the proof.

(d) The first part of the lemma directly follows from Lemma \ref{num_iso}. In addition, since $\mu^2-\mu+1\ne 0$, $\#[\mu]=6$ for $\mu \notin [2]$ and $\#[\mu]=3$ for $\mu \in [2]$. From (b) we see that $\mu\in [2]$ corresponds to the self loop of the vertex $54000 \bmod p$. 
\end{proof} 
Now we are ready to prove Theorem \ref{thm:A} in the case of $p \equiv11 \bmod12$.
\begin{proof}[Proof of Theorem \ref{thm:A} in the case of $p \equiv11 \bmod12$] 
Firstly, we observe the case $p>11$.
    In order to determine the value of $\psi_p$, by Proposition \ref{arigataya}, we count the number of $\lm \in \fp$ such that the two elliptic curves $E_{\Lm^{-}(\lm)}$ and $E_{\Lm^{+}(\lm)}$ are supersingular. By Theorem \ref{important}, those $\lm\in \fp$ are exactly the $\lm\in \fp$ such that the $j$-invariants $j(E_{\Lm^{-}(\lm)})$ and  $j(E_{\Lm^{+}(\lm)})$ are the roots of $P_{p}(X) \bmod p$. Furthermore, it is exactly equal to the summation of the weights of all edges of the graph $G_p$. Since $1728 \not \equiv54000 \bmod p$ for $p>11$ and from lemma \ref{propertiesofgraph}, we obtain the following equation:
    \[
    \psi_p=\sum_{e\in E}w(e)=6(n-1)+3=6n-3
    \]
    where $n$ is the number of distinct roots of $P_p(X) \bmod p$.
On the other hands, from Proposition \ref{fac_P_p}, the class number $h(-p)$ is written as follows:
\[
h(-p)=2(n-1)+1=2n-1.
\]
Hence we obtain for $p>11$
\[
\psi_p=3h(-p).
\]
Finally, for $p=11$ it is easy to see that $\psi_p=3h(-p)=3$.  
\end{proof}

\section{Proof of Theorem \ref{thm:B}}
\label{pfofb}
We prove Theorem \ref{thm:B} in \textcolor{black}{the introduction}. We use the idea by Fouvry and Murty \cite[Theorem 4]{FM01}. 

Let $X$, $\epsilon$ and $N$ be as in Theorem B. Theorem \ref{thm:A} enables us to write the summations $\sum_{|\lm| \le N} {\phi}_{\lm}(X)$ with the class numbers as follows. 
\begin{eqnarray}
    \sum_{|\lm| \le N} {\phi}_{\lm}(X) &=&  \sum_{|\lm| \le N} \sum_{\substack{p < X \\  C_{\lm} \text{ is superspecial} }} 1 
= \sum_{p < X} \sum_{\substack{|\lm| \le N \\ C_{\lm} \text{ is superspecial} }} 1 \nonumber\\
&=& \sum_{p < X}   \left( \frac{2N}{p} +O(1)  \right) {\psi}_{p} \nonumber\\
 &=& \!\!\! \sum_{\substack{p<X \\ p \equiv 1 \bmod 4}} \frac{2N}{p} \cdot \frac{3}{2} h(-3p) + \!\!\!  \sum_{\substack{p<X \\ p  \equiv 11 \bmod 12}} \frac{2N}{p}\cdot 3 h(-p)  +  O \left( X^{\frac{3}{2}} \right) .\nonumber\\&&\label{anaeq1}
\end{eqnarray}
We note that the sum of $h(-p)$ and $h(-3p)$ over primes less than $X$ is $O(X^{3/2})$.
(cf. Mertens \cite{FM} and  Siegel \cite{SI}). 
Recall the Dirichlet class number formula
\[
h(d)=\frac{\sqrt{|d|}}{ \pi}L(1,{\chi}_{d}),
\]
where $\chi_d$ is the character defined by the Kronecker symbol $(-d\ |\ \cdot)$.
Then the sum \eqref{anaeq1} is  
\begin{align*}
\label{anaeq2}
  && \frac{2N}{\pi} \left(  \frac{3}{2}\sum_{\substack{p<X \\ p \equiv 1 \bmod 4}}\!\! \!\! \frac{\sqrt{3}}{\sqrt{p}}L(1,{\chi}_{-3p})+ \!\! \!\! \!\! \sum_{\substack{p<X \\ p \equiv 11\bmod 12 }} \!\! \!\!  \frac{3}{\sqrt{p}}L(1,{\chi}_{-p}) \right) + o \left( \frac{N\sqrt{X}}{\log X} \right).
  \stepcounter{equation}\tag{\theequation} 
\end{align*}
From Polya's inequality (cf. Apostol \cite[Theorem 8.21]{Apo}) and Abel's identity (cf. Apostol \cite[Theorem 4.2]{Apo}), for any parameter $U >1$, we have 
     \[
    \left| \sum_{U< n } \frac{{\chi}_{-p} (n)}{n} \right| < \frac{3\sqrt{p}\log p}{U}
    \]
    and the same inequality for $-3p$. 
We choose $U=X^{3/4}$. Then \eqref{anaeq2} is as follow\textcolor{black}{s}.
\begin{align*}
\label{anaeq3}
   \frac{6N}{\pi} \left\{ \frac{\sqrt{3}}{2} \!\!\!\sum_{\substack{p<X \\ p \equiv 1 \bmod 4}}\!\!\!\!\frac{1}{\sqrt{p}} \sum_{n \le U}\frac{{\chi}_{-3p}(n)}{n}+  \!\!\!\!\!\!\sum_{\substack{p<X \\ p \equiv 11 \bmod 12 }} \!\!\!\!\frac{1}{\sqrt{p}}\sum_{n \le U} \frac{{\chi}_{-p}(n)}{n} \right\}
    + o \left( \frac{N\sqrt{X}}{\log X} \right)
     \stepcounter{equation}\tag{\theequation} .
\end{align*}

    
We recall that ${\chi}_{-3p}$ and ${\chi}_{-p}$ are expressed by the Legendre symbols: 
 if $p \equiv 1\bmod 4$\textcolor{black}{, then}
    \[
    {\chi}_{-3p}= \left( \frac{n}{3} \right) \left( \frac{n}{p} \right),
    \]
if $p \equiv 3 \bmod 4$\textcolor{black}{, then}
     \[
    {\chi}_{-p}= \left( \frac{n}{p} \right).
    \]
Substituting these \textcolor{black}{in} \eqref{anaeq3} yields
\begin{align*}
\label{anaeq4}
   && \frac{3\sqrt{3}N}{\pi}S(X,U) +  \frac{6N}{\pi} T(X,U)
    + o \left( \frac{N\sqrt{X}}{\log X} \right),
     \stepcounter{equation}\tag{\theequation} 
\end{align*}
where
\begin{eqnarray*}
    S(X,U)&:=& \sum_{n \le U} \frac{1}{n} \sum_{\substack{p<X \\ p \equiv1 \bmod 4}}\frac{\left( \frac{n}{3} \right) \left( \frac{n}{p} \right)}{\sqrt{p}},\\
    T(X,U)&:=&\sum_{n \le U} \frac{1}{n} \sum_{\substack{p<X \\ p \equiv 11\bmod 12}}\frac{\left( \frac{n}{p} \right)}{\sqrt{p}}.
\end{eqnarray*}
We calculate these sums using the idea of Fouvry and Murty \cite[Theorem 4]{FM01}.

We first calculate $T(X,U)$. If we sum over only $n \le U$ such that $n$ is a perfect square or $n/3$ is a perfect square, then we have 
    \begin{eqnarray*}
    \sum_{\substack{n \le U \\ n=d^2 \text{ for some } d \in \bN }} \frac{1}{d^2} \sum_{\substack{p<X \\ p \equiv 11 \bmod 12 }} \frac{1}{\sqrt{p}}+\sum_{\substack{n \le U \\ n=3l^2 \text{ for some } l \in \bN }} \frac{1}{3l^2} \sum_{\substack{p<X \\ p \equiv 11 \bmod 12}} \frac{1}{\sqrt{p}}.
    \end{eqnarray*}
we note that $3 \in {{\fp}^{*}}^2$ if $p \equiv 11 \bmod 12$. 
We know that the inner sum of each term is equal to
\[
\frac{2}{\varphi(12)} \frac{\sqrt{X}}{\log X} +o \left( \frac{\sqrt{X}}{\log X} \right),
\]
where $\varphi$ is Euler's totient function. 
Thus, we get
\[
 \sum_{\substack{n \le U \\ n=d^2 \text{ for some } d \in \bN }} \frac{1}{d^2} \sum_{\substack{p<X \\ p \equiv 11 \bmod 12 }} \frac{1}{\sqrt{p}}=\frac{{\pi}^2}{12} \frac{\sqrt{X}}{\log X} +o \left( \frac{\sqrt{X}}{\log X} \right)
\]
and
\[
\sum_{\substack{n \le U \\ n=3l^2 \text{ for some } l \in \bN }} \frac{1}{3l^2} \sum_{\substack{p<X \\ p \equiv 11 \bmod 12 }} \frac{1}{\sqrt{p}}=\frac{{\pi}^2}{36} \frac{\sqrt{X}}{\log X} +o \left( \frac{\sqrt{X}}{\log X} \right).
\]
\textcolor{black}{Following a similar approach to} Fouvry and Murty \cite[p.\ 92]{FM01} and using Lemma \ref{jutila}, the sum of the remaining terms is ultimately negligible compared to $\sqrt{X}/ \log X$.
Hence we obtain
\begin{eqnarray}
\label{11mod12}
    T(X,U)=\frac{{\pi}^2}{9} \frac{\sqrt{X}}{\log X} +o \left( \frac{\sqrt{X}}{\log X} \right).
\end{eqnarray}

Second, we calculate $S(X,U)$. If $3$ divides $n$, then ${\chi}_{-3p}(n)$ is $0$ so we only treat $n$ such that $3 \nmid n $. If we sum over only such $n \le U$  that is also perfect square, then we have
  \begin{eqnarray*}
   && \sum_{\substack{n \le U \\ n=d^2 \text{ for some } d \in \bN, \ 3 \nmid d}} \frac{1}{d^2} \sum_{\substack{p<X \\ p \equiv 1  \bmod 4}} \frac{1}{\sqrt{p}}\\ &&=\left(\sum_{\substack{n \le U \\ n=d^2 \text{ for some } d \in \bN}} \frac{1}{d^2} -\sum_{\substack{n \le U \\ n=(3d)^2 \text{ for some } d \in \bN}} \frac{1}{(3d)^2}\right)\sum_{\substack{p<X \\ p \equiv 1  \bmod 4}} \frac{1}{\sqrt{p}}.
    \end{eqnarray*}
    We know that 
    \begin{eqnarray*}
     && \sum_{\substack{p<X \\ p\equiv 1 \bmod 4}} \frac{1}{\sqrt{p}}=  \frac{2}{\varphi(4)} \frac{\sqrt{X}}{\log X} +o \left( \frac{\sqrt{X}}{\log X} \right).
    \end{eqnarray*}
    Thus we obtain 
    \begin{eqnarray*}
         \sum_{\substack{n \le U \\ n=d^2,\ \exists d \in \bN,\ 3 \nmid d }} \frac{1}{d^2} \sum_{\substack{p<X \\ p \equiv 1 \bmod 4 }} \frac{1}{\sqrt{p}}=\frac{{4\pi}^2}{27}\frac{\sqrt{X}}{\log X} +o \left( \frac{\sqrt{X}}{\log X} \right).
    \end{eqnarray*}
\textcolor{black}{Following a similar approach to} Fouvry and Murty \cite[p.92]{FM01}, the sum of the remaining terms is ultimately negligible compared to $\sqrt{X}/ \log X$.  Thus we find that
\begin{eqnarray}
\label{1mod4}
    S(X,U)=\frac{4{\pi}^2}{27}\frac{\sqrt{X}}{\log X} +o \left( \frac{\sqrt{X}}{\log X} \right).
\end{eqnarray}
Substituting the equations \eqref{11mod12}, \eqref{1mod4} for \eqref{anaeq4} yields 
\[
 \sum_{|\lm| \le N} {\phi}_{\lm}(X) =\frac{(6+4\sqrt{3})\pi N}{9}\frac{\sqrt{X}}{\log X} +o \left( \frac{N \sqrt{X}}{\log X} \right).
\]
Hence we obtain
\[
 \frac{1}{N}\sum_{|\lm| \le N} {\phi}_{\lm}(X) =\frac{(6+4\sqrt{3})\pi }{9} \frac{\sqrt{X}}{\log X} +o \left( \frac{ \sqrt{X}}{\log X} \right).
\]
\section{Proof of Theorem \ref{thm:C}}
\label{pfofc}
In this section, we prove Theorem \ref{thm:C}.  From our previous work \cite[\S 6]{Ando}, we have
\begin{eqnarray*}
    \sum_{\text{ht}(\lm) \le N} {\phi}_{\lm}(X) &=& 
\sum_{p < X} \sum_{\substack{\text{ht}(\lm) \le N \\ C_{\lm} \text{ is superspecial} }} 1\\
&=& \sum_{p < X} \left(  \frac{12}{{\pi}^2} \frac{N^2}{p} + o(N\log N) +O\left( \frac{N^2}{p^2} \right) \right) {\psi}_p.
\end{eqnarray*}
Thus, from the result of Theorem B we find that
\[
\sum_{\lm \in \bQ,\ \height(\lm) \le N } {\phi}_{\lm}(X) = \frac{4(3+2\sqrt{3})}{3\pi} \frac{N^2\sqrt{X}}{\log X} +o \left( \frac{N^2\sqrt{X}}{\log X}\right).
\]
Hence we obtain
 \[
    \frac{1}{N^2}\sum_{\lm \in \bQ,\ \height(\lm) \le N } {\phi}_{\lm}(X) = \frac{4(3+2\sqrt{3})}{3\pi} \frac{\sqrt{X}}{\log X} +o \left( \frac{\sqrt{X}}{\log X}\right).
 \]

\subsection*{Data availability statement} 
Data sharing is not applicable to this article, as no datasets were generated or analyzed during the present work.

\subsection*{Conflict of interest}
The authors declare no conflicts of interest associated with this manuscript.

\end{document}